\def\?[#1]{\textbf{[#1]}\marginpar{\Large{\textbf{??}}}}
\newtheorem{theo}{Theorem}
\newtheorem{prop}{Proposition}[section]
\newtheorem{lemm}[prop]{Lemma}
\newtheorem{corr}[theo]{Corollary}
\theoremstyle{definition}
\newtheorem{rem}[prop]{Remark}
\numberwithin{equation}{section}
\newcommand{\vol}{\mathrm{vol}}
\newcommand{\dd}{\mathrm{d}}
\newcommand{\dist}{\mathrm{dist}}
\newcommand{\Lie}{\mathcal{L}}
\newcommand{\e}{\mathrm{e}}
\newcommand{\strf}{{\tr}_\mathrm{s}^\flat}
\newcommand{\dom}{\mathcal{O}}
\newcommand{\R}{\mathbb{R}}
\newcommand{\Z}{\mathbb{Z}}
\newcommand{\Sc}{\mathcal{S}}
\renewcommand{\C}{\mathbb{C}}
\newcommand{\N}{\mathbb{N}}
\newcommand{\Pcal}{\mathcal{P}}
\newcommand{\Ccal}{\mathcal{C}}
\newcommand{\Mcal}{\mathcal{M}}
\newcommand{\grm}{\mathrm{g}}
\newcommand{\wl}{\mathrm{wl}}
\newcommand{\Cscr}{\mathscr{C}}
\newcommand{\nbf}{\mathbf{n}}
\newcommand{\ibf}{\mathbf{i}}
\newcommand{\diam}{\operatorname{diam}}
\DeclareMathOperator{\rank}{rank}
\let\Re=\Real
\DeclareMathOperator{\sgn}{sgn}
\DeclareMathOperator{\supp}{supp}
\DeclareMathOperator{\WF}{WF}
\DeclareMathOperator{\tr}{tr}
\DeclareMathOperator{\Id}{Id}
\title{Closed geodesics with prescribed intersection numbers}
\author{Yann Chaubet}
\begin{document}
\maketitle

\begin{abstract}
{Let $(\Sigma, g)$ be a closed, oriented, negatively curved surface, and fix pairwise disjoint simple closed geodesics $\gamma_{\star,1}, \dots \gamma_{\star, r}$. We give an asymptotic growth as $L \to +\infty$ of the number of primitive closed geodesic of length less than $L$ intersecting $\gamma_{\star,j}$ exactly $n_j$ times, where $n_1, \dots, n_r$ are fixed nonnegative integers. This is done by introducing a dynamical scattering operator associated to the surface with boundary obtained by cutting $\Sigma$ along $\gamma_{\star,1}, \dots, \gamma_{\star, r}$ and by using the theory of Pollicott-Ruelle resonances for open systems.}
\end{abstract}

\section{Introduction}

Let $(\Sigma,g)$ be a closed oriented negatively curved Riemannian surface and denote by $\Pcal$ the set of its oriented primitive closed geodesics. For $L > 0$ define
$$N(L) = \#\{\gamma \in \Pcal,~\ell(\gamma) \leqslant L\},$$
where $\ell(\gamma)$ is the length of a geodesic $\gamma$. Then a classical result obtained by Margulis \cite{margulis1969applications} states that
$$
N(L) \sim \frac{\e^{hL}}{hL}
$$
as $L \to +\infty$, where $h > 0$ is the topological entropy of the geodesic flow of $(\Sigma, g)$.

The purpose of this paper is to understand the asymptotic behavior of the quantity
$$
N(\nbf, L) = \# \left\{ \gamma \in \mathcal{P},~\ell(\gamma) \leqslant L,~i(\gamma,\gamma_{\star,j}) = n_j,~j=1,\dots,r \right\}
$$
as $L \to +\infty$, where $\gamma_{\star,1}, \dots, \gamma_{\star, r}$ are some pairwise disjoint simple closed geodesics, $\nbf = (n_1, \dots, n_r) \in \N^r$, and $i(\gamma, \gamma_{\star_j})$ is the geometric intersection number between $\gamma$ and $\gamma_{\star,j}$. The main result goes as follows.

\begin{theo}\label{thm:multi} Let $\nbf = (n_1, \dots, n_r) \in \N^{r}$. If $N(\nbf, L) > 0$ for some $L > 0$, then there are $C_\nbf > 0, d_\nbf \in \N$ and $h_{\nbf} \in ]0, h[$ such that
$$
N(\nbf, L) \sim C_\nbf L^{d_\nbf - 1} \e^{h_\nbf L}, \quad L \to +\infty.
$$
\end{theo}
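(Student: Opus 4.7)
The plan is to reduce the counting of closed geodesics with prescribed intersection numbers to the spectral analysis of a dynamical scattering operator, and then to apply a Tauberian theorem. First I would cut $\Sigma$ along the simple closed geodesics $\gamma_{\star,1},\dots,\gamma_{\star,r}$ to produce a (possibly disconnected) compact surface $M$ with boundary $\partial M=\bigsqcup_{j,\pm}\gamma_j^\pm$, where $\gamma_j^\pm$ denote the two copies of $\gamma_{\star,j}$. The geodesic flow lifts to an open hyperbolic flow on the unit tangent bundle $SM$, whose trapped set is a hyperbolic set because the $\gamma_{\star,j}$ are totally geodesic. Every primitive closed geodesic $\gamma\in\Pcal$ with $i(\gamma,\gamma_{\star,j})=n_j$ is then encoded as a periodic orbit of the induced billiard-type dynamics that meets the lifted boundary $\partial SM$ exactly $|\nbf|:=\sum_j n_j$ times, with $n_j$ visits to the lifts of $\gamma_{\star,j}$.

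Next I would introduce the scattering operator. On $L^2(\partial_+SM)$, decomposed as $\bigoplus_{j=1}^{r}L^2(\partial_+^j SM)$ according to boundary components, let $B:\partial_+SM\to\partial_-SM$ be the partially defined first-crossing map of the open geodesic flow on $M$ and $\tau$ the associated travel time. For $\Re s\gg0$, set $S(s)u=\e^{-s\tau}\,u\circ B$ and decompose it into entries $S_{jk}(s)$ according to source/target components. Appealing to the theory of Pollicott--Ruelle resonances for open hyperbolic flows, $S(s)$ continues meromorphically to $\C$ as a bounded operator between appropriate anisotropic Sobolev spaces, with poles located at the Ruelle resonances of the truncated flow. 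A Guillemin--Fried-type trace formula should then yield, for $\Re s$ large,
$$
\sum_{\substack{\gamma\in\Pcal\\ i(\gamma,\gamma_{\star,j})=n_j}}\ell(\gamma^\sharp)\,\e^{-s\ell(\gamma)}\;=\;\sum_{\ibf\in W_\nbf}\Tr\bigl(S_{i_1 i_2}(s)\,S_{i_2 i_3}(s)\cdots S_{i_{|\nbf|} i_1}(s)\bigr),
$$
where $W_\nbf$ is the set of cyclic words of length $|\nbf|$ in $\{1,\dots,r\}$ with exactly $n_j$ occurrences of each letter $j$, and $\gamma^\sharp$ denotes a primitive representative. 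This identifies the generating function of interest with a finite combination of traces of words in the scattering operator.

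Finally I would let $h_\nbf$ be the largest real pole of the right-hand side and $d_\nbf$ its maximal order. The spectrum of $S(s)$ is controlled by topological pressures of potentials associated to the word pattern $\nbf$ on the trapped set, and the constraint of fixed intersection numbers forces the relevant pressure to lie strictly below $h$ by the variational principle, so $h_\nbf\in\,]0,h[$. The hypothesis $N(\nbf,L)>0$ ensures that the sum is nontrivial, hence $C_\nbf>0$. A Wiener--Ikehara Tauberian theorem applied to the Dirichlet-type series above then produces the asymptotic $N(\nbf,L)\sim C_\nbf L^{d_\nbf-1}\e^{h_\nbf L}$. The hardest point, I expect, is the spectral analysis of the matrix-valued scattering operator $S(s)$: establishing its meromorphic continuation and Fredholmness in the correct anisotropic framework, computing the order of its leading singularity in terms of the geometry, and excluding spurious poles on the critical vertical line $\Re s=h_\nbf$ (the latter being essential for the Tauberian conclusion). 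Once these inputs are in hand, the final counting step is standard.
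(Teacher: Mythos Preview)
Your overall architecture matches the paper's: cut along the geodesics, build a scattering operator from the open hyperbolic flow, express the relevant Dirichlet series as a trace of a word in the scattering operator, continue meromorphically via Pollicott--Ruelle theory for open systems, and conclude with a Tauberian argument. That said, there are two genuine gaps that the paper handles carefully and that your sketch does not address.

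First, the composition and trace of the scattering operators are \emph{not} defined as stated. The scattering map has singularities along the glancing set $\partial_0 = \bigcup_j S\gamma_{\star,j}$ (tangential directions), and the wavefront set of $S(s)$ hits the conormal of the diagonal there. The paper inserts a cutoff $\chi\in C^\infty_c(\partial\setminus\partial_0)$ and proves, via a microlocal analysis of the restricted resolvent, that $(\chi S(s))^n$ is well defined with a computable flat trace. One must then remove the cutoff: the paper controls the contribution of geodesics meeting some $\gamma_{\star,j}$ at a small angle (Lemma~\ref{lem:estangle}) and lets $\supp(1-\chi)\to\partial_0$. Without this two-step mechanism your trace formula is formal.

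Second, and more seriously, the step ``$N(\nbf,L)>0$ for some $L$ ensures $C_\nbf>0$'' does not follow. Knowing that a Dirichlet series has one nonzero term says nothing about the order or nonvanishing of the residue at the rightmost singularity of its meromorphic continuation. The scattering trace is a restriction to $\partial$ of the resolvent, and a priori the pole of the resolvent at $h_\nbf$ could drop order (or vanish) after restriction and trace. The paper resolves this by proving \emph{a priori} two-sided bounds of the correct order, e.g.\ $C^{-1}L^{d_\nbf-1}e^{h_\nbf L}\le N(\nbf,L)\le C L^{d_\nbf-1}e^{h_\nbf L}$, via direct geometric/combinatorial arguments in $\pi_1$ (Propositions~\ref{prop:lowerbound}, \ref{prop:lowerbound2} and their multi-curve analogues). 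These bounds force the trace to have a pole of exactly the expected order with nonzero leading coefficient, and they simultaneously identify $h_\nbf$ and $d_\nbf$ explicitly in terms of the entropies $h_j$ of the pieces $\Sigma_j$. Your appeal to ``topological pressures associated to the word pattern'' and the variational principle is too vague to deliver this; the paper's identification is concrete and the lower bounds are the crux.

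A minor point: Wiener--Ikehara in its standard form handles simple poles. Since $d_\nbf$ can exceed $1$, you need a Tauberian theorem for higher-order poles; the paper uses Delange's theorem.
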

In fact, a similar statement holds if we aditionnaly prescribe the order in which we want the intersections to occur, as follows. Let us denote by $\Sigma_1, \dots, \Sigma_q$ the connected components of the surface $\Sigma_\star = \Sigma \setminus (\gamma_{\star,1} \cup \cdots \cup \gamma_{\star, r})$ obtained by cutting $\Sigma$ along $\gamma_{\star, 1}, \dots, \gamma_{\star, r}$.  Let $\gamma \in \Pcal$ intersecting at least one $\gamma_{\star, i}$. For each $\gamma \in \Pcal$, we denote by $\omega(\gamma)$ the pair $(u,v)$ of cyclically ordered sequences $u = (u_1, \dots, u_N)$ and $v = (v_1, \dots, v_N)$ such that $\gamma$ goes through $\Sigma_{v_1}, \dots, \Sigma_{v_N}$ (in this order) and passes from $\Sigma_{v_{k}}$ to $\Sigma_{v_{k+1}}$ by crossing $\gamma_{\star, u_k}$, where $v_{N+1} = v_1$ (see Figure \ref{fig:intro}) ; those sequence are well defined modulo cyclic permutations.
Any pair of finite sequences $\omega$ will be called an \textit{admissible path} if $\omega \sim \omega(\gamma)$ for some $\gamma \in \Pcal$, where $\omega \sim \omega(\gamma)$ means that $\omega(\gamma)$ is a cyclic permutation of $\omega$ (the permutation being the same for both components of $\omega$). 

\begin{figure}\label{fig:intro}
\includegraphics{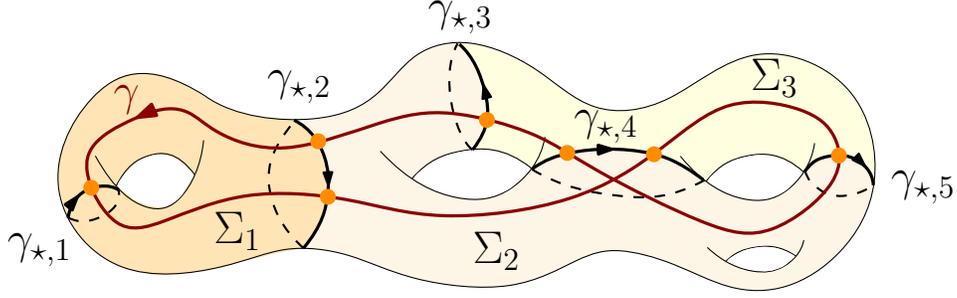}
\caption{A closed geodesic $\gamma$ on $\Sigma$. Here we have $r = 5$, $q = 3$, and $\omega(\gamma) \sim (u,v)$ where $u = (1, 2, 4, 5, 4, 3, 2)$ and $v = (1, 1, 2, 3, 2, 3, 2)$ (the starting point of $\gamma$ is the red arrow).}
\end{figure}

Denote by $S\Sigma$  the unit tangent bundle of $(\Sigma, g)$ and by $(\varphi_t)_{t \in \R}$ the associated geodesic flow, acting on $S\Sigma$. Let $\pi : S\Sigma \to \Sigma$ be the natural projection. We denote by $h_j > 0$ ($j = 1, \dots, q$) the entropy of the open system $(\Sigma_j, g|_{\Sigma_j})$, that is, the topological entropy of the flow $\varphi$ restricted to the trapped set
$$
K_j = \overline{\{(x,v) \in S\Sigma,~\pi(\varphi_t(x,v)) \in \Sigma_j,~t \in \R\}}
$$
where the closure is taken in $S\Sigma$.

For any admissible path $\omega = (u,v)$ of length $N$, we set
$$
h_\omega= \max \{h_{v_k},~ k = 1, \dots, N\}, \quad d_\omega = \#\{k = 1, \dots, N,~h_{v_k} = h_\omega\}.
$$
The number $h_\omega$ is the maximum of the entropies of the surfaces encountered by any $\gamma \in \Pcal$ satisfying $\omega(\gamma) \sim \omega$, while $d_\omega$ is the number of times any such $\gamma$ will encounter a surface whose entropy is equal to $h_\omega$ (for example, in Figure \ref{fig:intro}, if the entropy $h_2$ of $\Sigma_2$ is the greatest, we have $h(\omega) = h_2$ and $d(\omega) = 3$, as $\gamma$ travels three times through $\Sigma_2$). 

In fact, the numbers $h_\omega$ and $d_\omega$ depend only on 
$
\nbf(\omega) = (n_1, \dots, n_r)
$
where $n_j = \#\{k = 1, \dots, N,~u_k = j\}$ (see \S\ref{sec:multi}) ; we will thus refer to them by $h_{\nbf(\omega)}$ and $d_{\nbf(\omega)}$ respectively.

\begin{theo}\label{thm:path}
Let $\omega$ be an admissible path. Then there is $c(\omega) > 0$ such that
$$
\#\{\gamma \in \Pcal,~\ell(\gamma) \leqslant L,~\omega(\gamma) \sim \omega\} \sim c(\omega)  L^{d_{\nbf(\omega)}- 1} \e^{h_{\nbf(\omega)}L}, \quad L \to +\infty.
$$
\end{theo}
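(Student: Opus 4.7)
The strategy is to encode the count as the leading singularity, at $\lambda = h_{\nbf(\omega)}$, of a dynamical Dirichlet series
\[
Z_\omega(\lambda) = \sum_{\gamma \in \Pcal,\ \omega(\gamma) \sim \omega} \ell(\gamma)\, e^{-\lambda \ell(\gamma)},
\]
and then to apply a Delange--Ikehara Tauberian theorem. The analytic structure of $Z_\omega$ should come from writing it as a flat trace of a cyclic product of the dynamical scattering operators attached to the open subsystems $(\Sigma_j, g|_{\Sigma_j})$, as advertised in the abstract.

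\emph{Geometric decomposition and trace identity.} Any primitive $\gamma$ with $\omega(\gamma) \sim \omega = (u,v)$ of length $N$ is cut by the $\gamma_{\star,u_k}$ into $N$ arcs $\gamma_k \subset \Sigma_{v_k}$, each going from $\gamma_{\star,u_{k-1}}$ to $\gamma_{\star,u_k}$, and $\ell(\gamma) = \sum_k \ell(\gamma_k)$. For each $j$, the dynamical scattering operator $\mathbf{S}_j(\lambda)$ sends a distribution on the inward boundary $\partial_-(S\Sigma_j)$ to one on the outward boundary $\partial_+(S\Sigma_j)$, weighted by $e^{-\lambda t}$ along each trajectory of the open flow. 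Splitting $\partial_\pm(S\Sigma_j)$ by which $\gamma_{\star,a}$ the footpoint lies on yields blocks $\mathbf{S}_j^{a \to b}(\lambda)$. A Guillemin--Atiyah--Bott-type flat trace formula for the cyclic composition then reads
\[
\tr^\flat \Bigl( \mathbf{S}_{v_N}^{u_{N-1} \to u_N}(\lambda) \circ \cdots \circ \mathbf{S}_{v_1}^{u_N \to u_1}(\lambda) \Bigr) = c_\omega\, Z_\omega(\lambda) + R_\omega(\lambda),
\]
where $c_\omega > 0$ is an explicit combinatorial factor accounting for the cyclic symmetry of $\omega$, and $R_\omega$ is holomorphic on $\{\Re \lambda > h_{\nbf(\omega)} - \varepsilon\}$, collecting contributions of non-primitive iterates.

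\emph{Pole structure and Tauberian step.} By the Pollicott--Ruelle theory for open hyperbolic systems, each $\mathbf{S}_j(\lambda)$ extends meromorphically past $\{\Re \lambda = h_j\}$ with a simple pole at $\lambda = h_j$, whose residue is a positive rank-one operator built from the leading (Patterson--Sullivan-type) resonant states of the restricted flow on $K_j$. The cyclic product above therefore has at $\lambda = h_{\nbf(\omega)}$ a pole of order exactly $d_{\nbf(\omega)}$: each of the $d_{\nbf(\omega)}$ factors with $h_{v_k} = h_{\nbf(\omega)}$ contributes one order, while positivity of the residues prevents cancellation inside the trace. Topological mixing of the geodesic flow on each $K_j$ rules out any other pole of the cyclic product on $\{\Re \lambda = h_{\nbf(\omega)}\}$, and polynomial bounds in $|\Im \lambda|$ on a strip beyond the critical line allow a Delange--Ikehara Tauberian theorem to convert the pole of order $d_{\nbf(\omega)}$ of $Z_\omega$ into
\[
\#\{\gamma \in \Pcal : \ell(\gamma) \leq L,\ \omega(\gamma) \sim \omega\} \sim c(\omega)\, L^{d_{\nbf(\omega)}-1} e^{h_{\nbf(\omega)} L}.
\]

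The hard part is this pole-structure analysis: identifying the residue of $\mathbf{S}_j^{a \to b}(\lambda)$ at $\lambda = h_j$ as a \emph{positive} rank-one operator and showing that the cyclic pairing along $\omega$ is strictly positive, so that $c(\omega) > 0$ as claimed. This should reduce to a Perron--Frobenius-type statement: the leading resonant state on $\partial_\mp(S\Sigma_j)$ is a positive measure whose support meets each boundary component $\gamma_{\star,a}$ precisely when trajectories of $K_j$ can enter/exit through $\gamma_{\star,a}$, so that admissibility of $\omega$ is exactly the condition ensuring the cyclic pairing of these measures is nonzero. The remaining technical points, namely the meromorphic continuation of each $\mathbf{S}_j$ past $\lambda = h_j$ with polynomial growth in $|\Im \lambda|$ in an anisotropic Sobolev framework, and the exclusion of other poles on the critical line via mixing, should follow from standard Pollicott--Ruelle microlocal machinery adapted to the open hyperbolic setting.
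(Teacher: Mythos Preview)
Your overall architecture---cyclic product of scattering operators, Atiyah--Bott/Guillemin trace formula, pole of order $d_{\nbf(\omega)}$ at $h_{\nbf(\omega)}$, Delange Tauberian---is exactly the paper's route (\S\ref{sec:multi}). There are, however, two genuine gaps in your proposal that the paper handles by different means.

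\textbf{Positivity of $c(\omega)$.} You propose to get $c(\omega)>0$ by a Perron--Frobenius-type argument: the residue of each $\mathbf{S}_j(\lambda)$ at $\lambda=h_j$ is a rank-one operator built from positive resonant states, and admissibility of $\omega$ guarantees the cyclic pairing is nonzero. The paper does \emph{not} argue this way, and the route you sketch is not obviously available. The residue $\Pi_{\pm,\partial}$ is $\psi^*\iota^*\iota_X\Pi_{\pm,\delta}(h_\star)\iota_*$, built from distributions $u_\pm,s_\mp\in\mathcal{D}'^1(M_\delta)$ supported on the (possibly fractal) trapped set; their restrictions to $\partial$ are not a priori positive measures, and there is no direct Perron--Frobenius statement available for $\iota^*u_\pm\wedge\iota^*s_\mp$ on each boundary component. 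Instead, the paper proves \emph{a priori} lower bounds (Propositions~\ref{prop:lowerbound}, \ref{prop:lowerbound2}, and their multi-curve extension \eqref{eq:apriorimulti}) by geometric group theory: Milnor--\v{S}varc, orbital counting in $\pi_1(\Sigma_j)$, and a careful analysis (Lemmas~\ref{lem:constructgeodesics}, \ref{lem:2}, \ref{lem:0}, \ref{lem:W}) of when concatenated words $w_1\cdots w_n$ with $w_\ell\in\pi_1(\Sigma_{v_\ell})$ give the same free homotopy class. These bounds \emph{force} the leading Laurent coefficient to be nonzero (see \S\ref{subsec:notseparating2}), which is how $c(\omega)>0$ is actually established.

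\textbf{The glancing cutoff.} You write the trace of a bare cyclic product of $\mathbf{S}_j^{a\to b}(\lambda)$. In the paper the composition $(\chi\tilde\Sc_\pm(s))^n$ and its flat trace are only defined after inserting a cutoff $\chi\in C^\infty_c(\partial\setminus\partial_0)$ away from the tangential set $\partial_0$ (Lemma~\ref{lem:composition}, Proposition~\ref{prop:computeflattrace}); the wavefront estimates needed for composition fail at $\partial_0$. One then takes $\chi=\chi_\eta$ with $\eta\to0$ and must control the discrepancy $N(n,L)-N(n,\chi_\eta,L)$, i.e.\ the number of geodesics with at least one very small intersection angle. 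This is Lemma~\ref{lem:estangle}, another geometric argument (winding around $\gamma_\star$) with no microlocal substitute. Without it, you cannot pass from the Tauberian asymptotic for $N_\pm(n,\chi,L)$ to one for the uncut count, and the limit $c_\star=\lim_{\eta\to0}c_\pm(\chi_\eta)$ is not identified.

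In short: your skeleton is correct, but the two places you flagged as ``should follow'' (positivity via Perron--Frobenius, and polynomial bounds on vertical lines for the Tauberian step) are not how the paper proceeds and would require substantial new work. The paper replaces both by the a priori combinatorial estimates of \S\ref{sec:apriori} together with the angle estimate of \S\ref{subsec:aprioriangles}.
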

Note that Theorem \ref{thm:multi} can be deduced from Theorem \ref{thm:path} by summing over admissible paths $\omega$ with $\nbf(\omega) = \nbf$ where $\nbf \in \N^r$ is fixed. We refer to \S\ref{sec:multi} for a slightly more precise statement.

For the sake of simplicity, and to make the exposition clearer, we will deal in the major part of this article with the case $r = 1$. The case $r > 1$ will be then obtained by identical techniques, as described in \S\ref{sec:multi}. Thus from now on and unless stated otherwise, we will assume that we are given only a simple closed geodesic $\gamma_\star$ and we set
$$
N(n,L) = \# \left\{ \gamma \in \mathcal{P},~\ell(\gamma) \leqslant L,~i(\gamma,\gamma_{\star}) = n\right\}.
$$
In this context, our result reads as follows.

\begin{theo}\label{thm:main} Let $\gamma_\star$ be a nontrivial simple closed geodesic of $(\Sigma,g).$
\begin{enumerate}[label=(\alph*)]
\item Suppose that $\gamma_\star$ is not separating, that is $\Sigma \setminus \gamma_\star$ is connected. Then there exists $c_\star>0$ such that for each $n \in \N$,
$$N(n,L) \sim \frac{(c_\star L)^n}{n!} \frac{\e^{h_\star L}}{h_\star L}, \quad L \to + \infty,$$
where $h_\star \in ]0, h[$ is the entropy of the geodesic flow of the open system $(\Sigma \setminus \gamma_\star)$.
\\

\item Suppose that $\gamma_\star$ separates $\Sigma$ in two surfaces $\Sigma_1$ and $\Sigma_2$. Let $h_j \in ]0, h[$ denote the entropy of the open system $(\Sigma_j, g|_{\Sigma_j})$ and set $h_\star = \max(h_1, h_2)$. Then there is $c_\star>0$ such that for each $n \in \N$ we have as $L \to +\infty,$
$$
N(2n,L) \sim 
\left\{ 
\begin{matrix} 
\displaystyle{\frac{(c_\star L)^n}{n!} \frac{\e^{h_\star L}}{h_\star L}} \quad &\text{ if } h_1 \neq h_2,\vspace{0.3cm}  \\ 
\displaystyle{2\frac{\left(c_\star L^2\right)^n}{(2n)!}\frac{\e^{h_\star L}}{h_\star L}} \quad  &\text{ if }~ h_1 = h_2,
\end{matrix}
\right.
$$
\end{enumerate}
\end{theo}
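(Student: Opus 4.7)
The strategy is to convert the counting problem into the spectral analysis of a meromorphic family of scattering operators associated with the open geodesic flow on $\Sigma \setminus \gamma_\star$, and then to apply a Tauberian theorem of Wiener--Ikehara type.

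I begin by cutting $\Sigma$ along $\gamma_\star$, obtaining a surface with boundary---connected in case (a), with two components $\Sigma_1 \sqcup \Sigma_2$ in case (b). Let $\partial_\pm \subset S\Sigma$ denote the inward- and outward-pointing unit vectors along $\gamma_\star$. Each non-trapped orbit of the geodesic flow inside the open piece travels from $\partial_+$ to $\partial_-$ in a finite time $\tau(x,v)$, producing a scattering map $\sigma : \partial_+ \to \partial_-$. After identifying $\partial_-$ with $\partial_+$ via the natural diffeomorphism that crosses $\gamma_\star$, one obtains an operator
\[
M(s) f := \e^{-s \tau}\, \sigma^* f, \qquad M(s) : \Ccal^\infty(\partial_+) \to \Ccal^\infty(\partial_+),
\]
whose $n$-fold iterate encodes orbits transiting through the open piece exactly $n$ times. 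A flat-trace formula of Atiyah--Bott--Guillemin type then identifies $\Tr^\flat M(s)^n$ with a weighted sum over $\gamma \in \Pcal$ with $i(\gamma, \gamma_\star) = n$, times a combinatorial factor $n$ accounting for the choice of starting intersection.

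Using the theory of Pollicott--Ruelle resonances for open hyperbolic systems, I would next show that $M(s)$ extends meromorphically to $\C$ on appropriate anisotropic spaces, with first pole at $s = h_\star$ and residue of rank one in case (a), or rank one in each off-diagonal block in case (b), where the block structure is induced by the splitting $\partial_\pm = \partial_\pm^{(1)} \sqcup \partial_\pm^{(2)}$. In case (a), and in case (b) with $h_1 \neq h_2$, this yields a pole of order $n$ at $h_\star$ for $M(s)^n$ (respectively $M(s)^{2n}$); in case (b) with $h_1 = h_2$ both off-diagonal blocks of $M(s)$ have simple poles at $h_\star$, so $M(s)^{2n}$ has a pole of order $2n$, and the factor $2$ in the stated asymptotic arises from summing the two diagonal blocks in the trace. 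A Wiener--Ikehara Tauberian theorem applied to $F_n(s) = \sum_{i(\gamma, \gamma_\star) = n} \e^{-s \ell(\gamma)}$ converts an order-$k$ pole at $h_\star$ into the counting asymptotic $N(n, L) \sim C L^{k-1} \e^{h_\star L} / ((k-1)!\, h_\star)$, matching the claimed formulae after elementary algebra with $k = n$ in the first two cases and $k = 2n$ in the third.

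The principal difficulty lies in the microlocal construction of anisotropic function spaces on the codimension-one set $\partial_+$ on which $M(s)$ becomes a meromorphic family of Fredholm operators with discrete spectrum, together with the proof of the flat trace formula and of the simplicity of the leading resonance at $s = h_\star$; the latter should follow from mixing of the Bowen--Margulis equilibrium state on the trapped set $K_\star$. A secondary technical point is the removal of the determinant weights $|\det(I - P_\gamma)|$ which naturally appear in the flat trace formula when passing to the unweighted counting function $N(n,L)$, which can be handled by a standard prime-orbit-theorem argument exploiting positivity of the equilibrium measure.
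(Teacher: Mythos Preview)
Your overall strategy matches the paper's: define a scattering operator, relate its iterates to closed geodesics with prescribed intersection number via a flat-trace formula, use Pollicott--Ruelle theory for the open system to get meromorphy with a rank-one residue at $h_\star$, then apply a Tauberian theorem (the paper uses Delange's, not Wiener--Ikehara, to handle higher-order poles). However, two essential ingredients are missing from your outline, and each accounts for a substantial portion of the actual proof.

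First, you do not address the glancing set $\partial_0 = S\gamma_\star$. The scattering map and the first-return time blow up there, so neither $M(s)$ nor its iterates are well-defined as operators $C^\infty(\partial_+)\to C^\infty(\partial_+)$ without further work. The paper handles this by inserting a cutoff $\chi \in C^\infty_c(\partial\setminus\partial_0)$, proving via a wavefront-set argument that $(\chi\tilde\Sc_\pm(s))^n$ is well-defined and has a flat trace, and then letting $\supp(1-\chi)$ shrink to $\partial_0$. The passage to the limit requires a separate estimate (Lemma~\ref{lem:estangle}) bounding the number of closed geodesics that meet $\gamma_\star$ with at least one small angle; without it you cannot pass from the $\chi$-localized count to $N(n,L)$.

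Second, and more seriously, you assume that the rank-one structure of the residue automatically produces a pole of the correct order for the trace. It does not: $\chi\Pi_{\pm,\partial}$ has rank one, but its trace $c_\pm(\chi)$ could vanish, in which case the Tauberian argument yields nothing. The paper devotes all of \S\ref{sec:apriori} to \emph{a priori} lower bounds (Propositions~\ref{prop:lowerbound} and~\ref{prop:lowerbound2}) obtained by geometric group theory in $\pi_1(\Sigma_\star)$---constructing, and carefully counting, distinct conjugacy classes of the form $[b_\grm w]$ or $[w_2w_1]$---precisely in order to force $c_\pm(\chi_\eta)>0$ for small $\eta$. This step is not a routine ``prime-orbit-theorem argument''; it is where most of the effort lies, especially in the separating case with $h_1=h_2$.

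A minor point: the determinant weights $|\det(I-P_\gamma)|$ do not need to be removed. The paper works with the \emph{super} flat trace on $\Omega^\bullet(\partial)$, and the alternating sum over form degrees cancels the Poincar\'e determinants exactly, leaving the clean formula of Proposition~\ref{prop:computeflattrace}.
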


As before, the entropy $h_\star$ is defined as the topological entropy of the geodesic flow restricted to the trapped set
$$
K_\star = \overline{\{(x,v) \in S\Sigma,~\pi(\varphi_t(x,v)) \in \Sigma \setminus \gamma_\star~t \in \R\}}
$$
where the closure is taken in $S\Sigma$.

\begin{rem}
\begin{enumerate}[label=(\roman*)]
\item The case $n = 0$ is well known and follows from the growth rate of periodic orbits of Axiom A flows obtained by Parry-Pollicott \cite{parry1983analogue} (see \S\ref{subsec:cutting}). However, to the best of our knowledge, the result is new for $n > 0$.
\item Using a classical large deviation result by Kifer \cite{kifer1994large} and Bonahon's intersection form \cite{bonahon1986bouts}, we are in fact able to show that a typical closed geodesic $\gamma$ satisfies $i(\gamma, \gamma_\star) \approx I_\star \ell(\gamma)$  for some $I_\star > 0$ not depending on $\gamma$ (see Proposition \ref{lem:deviation} below for a precise statement). In particular Theorem \ref{thm:main} is a statement about very uncommon closed geodesics.
\end{enumerate}
\end{rem}

We also have an equidistribution result, as follows. We still denote by $(\varphi_t)_{t \in \R}$ the geodesic flow of $(\Sigma, g)$ acting on the unit tangent bundle $S\Sigma$. We set
$$
\tilde \partial = \{(x,v) \in S\Sigma,~x \in \gamma_\star\} \quad \text{and} \quad \Gamma = S\gamma_\star \cup \left\{z \in \tilde \partial,~\varphi_t(z) \in S\Sigma \setminus \tilde \partial,~t  > 0 \right\}
$$
where $S\gamma_\star = \{(x,v) \in \tilde \partial,~v \in T_x\gamma_\star\}.$ We define the Scattering map $S : \tilde \partial \setminus \Gamma \to \tilde \partial$ by 
$$
S(z) = \varphi_{\ell(z)}(z), \quad \ell(z) = \inf \{t > 0,~\varphi_t(z) \in \tilde \partial\}, \quad z \in \tilde \partial \setminus \Gamma.
$$
For any $n \in \N_{\geqslant 1}$ we set
$$
\Gamma_n = \tilde \partial \setminus \left\{z \in \tilde \partial \setminus \Gamma,~S^k(z) \in \tilde \partial \setminus \Gamma,~k = 1, \dots,~n - 1\right\}
$$
which is a closed set of Lebesgue measure zero, and
$$
\ell_n(z) = \ell(z) + \dots + \ell(S^{n-1}(z)), \quad z \in \tilde \partial \setminus \Gamma_n.
$$
\begin{theo}\label{thm:equidistribution}
Let $n \geqslant 1$. For any $f \in C^\infty(\tilde \partial)$ the limit
$$
\lim_{L \to +\infty} \frac{1}{n N(n,L)} \sum_{\substack{\gamma \in \Pcal \\ i(\gamma, \gamma_\star) = n}} \sum_{z \in I_\star(\gamma)} f(z)
$$
exists, where for any $\gamma \in \Pcal$, 
$
I_\star(\gamma) = \{(x,v) \in S\gamma,~x \in \gamma_\star\}
$
is the set of incidence vectors of $\gamma$ along $\gamma_\star$. This formula defines a probability measure $\mu_n$ on $\tilde \partial$, whose support is contained in $\Gamma_n.$
\end{theo}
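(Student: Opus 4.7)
The plan is to reduce the equidistribution statement to a spectral analysis of a twisted version of the scattering transfer operator already constructed in the paper to prove Theorem \ref{thm:main}: inserting multiplication by the test function $f$ inside the relevant flat trace modifies only the residue of the dynamical zeta function at its leading Pollicott--Ruelle resonance, and the ratio of twisted and untwisted residues will define $\mu_n$.

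To begin, I would translate the sum over closed geodesics into a sum over periodic points of $S$. Each primitive $\gamma \in \Pcal$ with $i(\gamma, \gamma_\star) = n$ gives an $S$-orbit $I_\star(\gamma) \subset \tilde\partial \setminus \Gamma_n$ of exact length $n$ with $\ell_n(z) = \ell(\gamma)$ for every $z \in I_\star(\gamma)$, and conversely every primitive $n$-periodic orbit of $S$ in $\tilde\partial \setminus \Gamma_n$ arises this way. Hence, up to a Möbius inversion over divisors of $n$ (whose contribution is of strictly lower exponential order), the numerator in the theorem equals
$$
\sum_{\substack{z \in \tilde\partial \setminus \Gamma_n \\ S^n(z) = z}} f(z)\, \indic_{\ell_n(z) \leqslant L}.
$$
Let $M_f$ denote multiplication by $f$ and let $\Sc_\lambda$ denote the weighted scattering transfer operator of the paper, acting on the anisotropic space on which it has discrete Pollicott--Ruelle spectrum. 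The Guillemin--Atiyah--Bott trace formula for the discrete scattering dynamics gives
$$
\tr^\flat\bigl(M_f\, \Sc_\lambda^{\,n}\bigr) \;=\; \sum_{S^n(z) = z} \frac{f(z)\, \e^{-\lambda\, \ell_n(z)}}{\bigl|\det(I - dS^n(z))\bigr|},
$$
and a contour-shift / Tauberian argument identical to the one driving Theorem \ref{thm:main} converts the leading singularity of $\lambda \mapsto \tr^\flat(M_f\, \Sc_\lambda^{\,n})$ at $\lambda = h_\star$ into an asymptotic of the shape $c_f\, L^{d_\star - 1} \e^{h_\star L}$ for the partial sum, with $c_f = \tr(M_f\, \Pi_\star^{(n)})$ up to factors independent of $f$, where $\Pi_\star^{(n)}$ is the finite-rank spectral projector of $\Sc_\lambda^{\,n}$ at the leading resonance.

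Dividing by the untwisted asymptotic (the case $f \equiv 1$, which recovers $n N(n,L)$ up to the same absorbed factors) yields the existence of the limit together with the closed form
$$
\mu_n(f) = \frac{\tr\bigl(M_f\, \Pi_\star^{(n)}\bigr)}{\tr\bigl(\Pi_\star^{(n)}\bigr)},
$$
which is manifestly linear, positive and normalized, hence a probability measure on $\tilde\partial$ by the Riesz representation theorem. The support statement is then a consequence of the fact that the Schwartz kernel of $\Pi_\star^{(n)}$ is concentrated on the closed non-wandering set for $S^n$ singled out in the statement. The main obstacle I foresee is not the formal extraction of the limit but the functional-analytic step: one must check that $M_f$ preserves the anisotropic space on which $\Sc_\lambda$ has discrete spectrum, and that the leading resonance at $h_\star$ has the same rank in the twisted setting. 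Both are in reach from the $f \equiv 1$ case already established in the paper, since multiplication by a $C^\infty$ function acts boundedly on the anisotropic spaces used for open hyperbolic systems and the spectral perturbation is a routine application of Kato's theory.
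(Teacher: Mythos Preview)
Your overall strategy---insert $f$ into the flat trace of the $n$-th power of the scattering operator, extract the leading singularity at $s=h_\star$, run the same Tauberian argument as for Theorem~\ref{thm:main}, and divide by the untwisted case---is exactly the route the paper takes in \S\ref{sec:bowen}. However, two of your steps do not match what is actually needed, and one of them is a genuine error rather than a cosmetic difference.

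\textbf{The trace you wrote is the wrong one.} Your displayed formula
\[
\tr^\flat\bigl(M_f\, \Sc_\lambda^{\,n}\bigr) \;=\; \sum_{S^n(z) = z} \frac{f(z)\, \e^{-\lambda\, \ell_n(z)}}{\bigl|\det(I - dS^n(z))\bigr|}
\]
is the Atiyah--Bott trace of the \emph{scalar} transfer operator acting on functions. The paper instead uses the \emph{super} flat trace $\strf$ acting on the full complex of forms (see \S\ref{subsec:flattrace} and the formula~(\ref{eq:strftotrf})), which replaces the weight $|\det(I-dS^n(z))|^{-1}$ by $\sgn\det(I-dS^n(z))=-1$. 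This is not a cosmetic point: with your weights, the dominant singularity of the resulting Dirichlet series is governed by the SRB-type pressure rather than by the topological entropy $h_\star$, and the limiting measure you would extract is not the one in the statement (in variable curvature these genuinely differ). You must pass to forms and take the alternating trace, exactly as in Proposition~\ref{prop:computeflattrace}, to obtain the clean sum $\sum_\gamma \bigl(\sum_{z\in I_\star(\gamma)}f(z)\bigr)\e^{-s\ell(\gamma)}$ that feeds into Delange's Tauberian theorem.

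\textbf{The cutoff near the glancing set is not optional.} You write $\Sc_\lambda^{\,n}$ and speak of an anisotropic space on which it has discrete spectrum, but the paper never establishes that $\tilde\Sc_\pm(s)^n$ is well defined without a cutoff; Lemma~\ref{lem:composition} and Proposition~\ref{prop:computeflattrace} require the factor $\chi\in C^\infty_c(\partial\setminus\partial_0)$ precisely because the wavefront set calculus breaks down at $\partial_0=S\gamma_\star$. The paper's proof therefore computes $\strf\bigl(f(\chi_\eta\tilde\Sc_\pm(s))^n\bigr)$, obtains an asymptotic depending on $\eta$, and then lets $\eta\to 0$ using Lemma~\ref{lem:estangle} to control the geodesics with a small intersection angle. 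Your sketch skips this entirely; the sentence about $M_f$ preserving an anisotropic space and Kato perturbation does not address the actual difficulty, which is at the glancing set, not in the spectral theory.

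\textbf{The support claim is more elementary than you suggest.} There is no need to invoke the Schwartz kernel of a spectral projector. If $f\in C^\infty_c(\tilde\partial\setminus\Gamma_n)$ then $\ell_n$ is bounded on $\supp f$, so only finitely many $\gamma$ contribute to the numerator while $N(n,L)\to\infty$; hence $\mu_n(f)=0$ directly from the definition. Your description of $\Gamma_n$ as ``the closed non-wandering set for $S^n$'' is also not accurate: $\Gamma_n$ is the set where $S^n$ fails to be defined (the trapped tails), not the non-wandering set.
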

We will give a full description of $c_\star$ and $\mu_n$ in terms of Pollicott-Ruelle resonant states of the geodesic flow of $(\Sigma_\star, g)$ for the resonance $h_\star$ in \S\ref{sec:bowen}. Here $\Sigma_\star$ is the compact surface with boundary obtained by cutting $\Sigma$ along $\gamma_\star$ (see \S\ref{subsec:cutting}).

\subsection*{Strategy of proof}
A key ingredient used in the proof of Theorems \ref{thm:main} and \ref{thm:equidistribution} is the Scattering operator $\Sc(s) : C^\infty(\tilde \partial) \to C^\infty(\tilde \partial \setminus \Gamma)$ which is defined by
$$
\Sc(s)f(z) = f(S(z)) \e^{-s\ell(z)}, \quad z \in \tilde \partial \setminus \Gamma, \quad s \in \C.
$$
As a first step (which is of independent interest, see Corollary \ref{cor:mero}), we prove that the family $s \mapsto \Sc(s)$ extends to a meromorphic family of operators $\Sc(s) : C^\infty(\tilde \partial) \to \mathcal{D}'(\tilde \partial)$ on the whole complex plane (here $\mathcal{D}'(\tilde \partial)$ denotes the space of distributions on $\tilde \partial$), whose poles are contained in the set of Pollicott--Ruelle resonances of the geodesic flow of the surface with boundary $(\Sigma_\star, g)$ (see \S\ref{subsec:resolv} for the definition of those resonances). In this context, the existence of such resonances follows from the work of Dyatlov--Guillarmou \cite{dyatlov2016pollicott}. By using the microlocal structure of the resolvent of the geodesic flow provided by \cite{dyatlov2016pollicott}, we are moreover able to prove that for any $\chi \in C^\infty_c(\tilde \partial \setminus S\gamma_\star)$, the composition $(\chi \Sc(s))^n$ is well defined for any $n \geqslant 1$, as well as its super flat trace (meaning that we also look at the action of $\Sc(s)$ on differential forms, see \S\ref{subsec:flattrace}) which reads
$$
\strf[(\chi \Sc(s))^n] = n \sum_{i(\gamma, \gamma_\star) = n} \frac{\ell^\#(\gamma)}{\ell(\gamma)} \e^{-s\ell(\gamma)} \prod_{z \in I_\star(\gamma)} \chi(z),
$$
where the products runs over all closed geodesics (not necessarily primitive) $\gamma$ with $i(\gamma, \gamma_\star) = n$ and $\ell^\#(\gamma)$ is the primitive length of $\gamma$ ; this formula is a consequence of the Atiyah-Bott trace formula \cite{atiyah1967lefschetz}. Furthermore, using \textit{a priori} bounds on the growth of $N(n,L)$ (obtained in \S\ref{sec:apriori}), we prove that $s \mapsto \strf[(\chi \Sc(s))^n]$ has a pole of order $n$ at $s = h$, provided that $\chi$ has enough support. Then letting the support of $1-\chi$ being very close to $S\gamma_\star$, and estimating the growth of geodesics intersecting $n$ times $\gamma_\star$ with at least one small angle, we are able to derive Theorem \ref{thm:main} from a classical Tauberian theorem of Delange \cite{delange1954generalisation}.
 
\subsection*{Application to geodesic billards}

We finally state a result on the growth number of periodic trajectories of the billard problem associated to a negatively curved surface with totally geodesic boundary, which follows from the methods used to prove Theorem \ref{thm:main}.

\begin{corr}\label{thm:3}
Let $(\Sigma', g')$ be a negatively curved surface with totally geodesic boundary. For any $n \in \N$ and $L > 0$ we denote by $N(n,L)$ the number of closed billiard trajectories on $(\Sigma', g')$ (that is, geodesic trajectories that bounce on $\partial \Sigma'$ according to Descartes' law) with exactly $n$ rebounds, and with length not greater than $L$. Then there is $c' > 0$ such that
$$
N(n,L) \sim \frac{(c'L)^n}{ n!} \frac{\e^{h'L}}{h'L}, \quad L \to +\infty,
$$
where $h'$ is the entropy of the open system $(\Sigma', g')$. \end{corr}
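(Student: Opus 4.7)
The plan is to carry over the scattering-operator approach behind Theorem~\ref{thm:main}(a) to the billiard flow on $(\Sigma',g')$, using the doubling construction as a technical device to import the microlocal machinery. Form $\Sigma = \Sigma'\cup_{\partial\Sigma'}\Sigma'$; since $\partial\Sigma'$ is totally geodesic, the doubled metric $g$ is smooth and of negative curvature on $\Sigma$, and there is an isometric involution $\sigma\colon\Sigma\to\Sigma$ exchanging the two copies of $\Sigma'$ and fixing $\partial\Sigma'$ pointwise. The image of $\partial\Sigma'$ in $\Sigma$ is a disjoint union of simple closed geodesics $\gamma_\star = \gamma_{\star,1}\cup\cdots\cup\gamma_{\star,r}$.

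Let $\tilde\partial^+\subset S\Sigma'|_{\partial\Sigma'}$ denote the inward unit tangent vectors, set $\tilde\partial = S\Sigma|_{\gamma_\star}$, and let $\tilde\sigma\colon\tilde\partial\to\tilde\partial$ be the involution induced by $\sigma$; it exchanges the inward and outward components of $\tilde\partial$. Define the billiard first-return map $S_b\colon\tilde\partial^+\setminus\Gamma_b\to\tilde\partial^+$ (flow along the geodesic of $\Sigma'$ until the next boundary hit, then reflect by Descartes' law) with travel time $\ell(z)$, and the associated scattering operator
$$
\mathcal S_b(s)f(z) \;=\; f(S_b(z))\,\e^{-s\ell(z)}, \qquad z\in\tilde\partial^+\setminus\Gamma_b,\ s\in\C.
$$
Using $\sigma$-equivariance of the geodesic flow on $\Sigma$, one checks that $S_b = \tilde\sigma\circ S$ on $\tilde\partial^+$, where $S\colon\tilde\partial\to\tilde\partial$ is the geodesic scattering map on the doubled surface cut along $\gamma_\star$. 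This reduces the meromorphic continuation of $\mathcal S_b(s)$ to that of $\Sc(s)$ provided by Corollary~\ref{cor:mero}: $\mathcal S_b(s)$ extends to a meromorphic family of operators $C^\infty(\tilde\partial^+)\to\mathcal{D}'(\tilde\partial^+)$ on $\C$, whose poles are contained in the Pollicott--Ruelle resonances of the geodesic flow of the open system $(\Sigma',g')$.

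With the meromorphic continuation in hand, the argument proceeds as in the main text. For $\chi\in C^\infty_c(\tilde\partial^+)$ supported away from tangential vectors, the Atiyah--Bott trace formula yields
$$
\strf\bigl[(\chi\mathcal S_b(s))^n\bigr] \;=\; n\sum_{\substack{\gamma\text{ closed billiard}\\ n\text{ rebounds}}}\frac{\ell^\#(\gamma)}{\ell(\gamma)}\,\e^{-s\ell(\gamma)}\prod_{z\in I(\gamma)}\chi(z),
$$
where the sum runs over all closed billiard trajectories (not necessarily primitive) with exactly $n$ rebounds and $I(\gamma)$ is the set of incoming boundary vectors of $\gamma$. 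A priori polynomial bounds on $N(n,L)\e^{-h'L}$, obtained as in \S\ref{sec:apriori}, force this trace to have a pole of order exactly $n$ at $s = h'$ and no pole with $\Re s > h'$. Delange's Tauberian theorem \cite{delange1954generalisation}, applied after letting $\chi\to 1$ while controlling the contribution of near-tangential bounces, then delivers the announced asymptotic with $c'>0$ expressible in terms of the Pollicott--Ruelle resonant state of the billiard flow at $s = h'$.

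The principal technical obstacle is the grazing set, where $S_b$ is discontinuous: one must verify that the Atiyah--Bott trace formula applies despite this singularity and that near-tangential bounces contribute negligibly when the cutoff $\chi$ is removed. The identification $S_b = \tilde\sigma\circ S$ reduces the microlocal content of both points to statements already established for $\Sc(s)$ in the body of the paper, so the remaining work mirrors the treatment of small-angle intersections with $\gamma_\star$ carried out there.
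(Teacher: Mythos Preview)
Your strategy matches the paper's: double $\Sigma'$, replace the boundary-identifying map $\psi$ by the Descartes reflection $R$ (which is precisely your $\tilde\sigma$, since $d\sigma$ at a boundary point fixes the tangential direction and negates the normal), and run the scattering-operator/trace-formula/Tauberian machinery of \S\S\ref{sec:scat}--\ref{sec:proofthm}.

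There is one technical slip worth flagging. The doubled metric is \emph{not} smooth in general: totally geodesic boundary forces only the first normal derivative of the metric to vanish at $\partial\Sigma'$, so the glued metric is of class $C^{3-\varepsilon}$ for every $\varepsilon>0$ but typically not $C^3$, and the geodesic flow on the double is only $C^1$. This matters because Corollary~\ref{cor:mero} and the wavefront calculus behind it rest on the smooth Dyatlov--Guillarmou resolvent. The paper handles this by observing that $\Sigma'$ itself is smooth and embeds into a slightly larger smooth surface $\Sigma_\delta$ with strictly convex boundary; the resolvent of that smooth open system supplies the meromorphic continuation of $\Sc_\pm(s)$, which does not depend on the extension chosen. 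The billiard return map $z\mapsto R(S_\mp(z))$ is smooth where defined, so the Atiyah--Bott formula applies, and the Anosov property of the (merely $C^1$) doubled flow is enough to verify the wavefront conditions needed for compositions and flat traces. With this correction, your outline coincides with the paper's argument.
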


\subsection*{Related works}

As mentioned before, the case $n = 0$ follows from the work Parry--Pollicott \cite{parry1983analogue} which is based on important contributions of Bowen \cite{bowen1972equidistribution, bowen1973symbolic}, as the geodesic flow on $(\Sigma_\star, g)$ can be seen as an Axiom A flow (see Lemma \ref{lem:Sigma} below and \cite[\S6.1]{dyatlov2016pollicott}). For counting results on non compact Riemann surfaces, see also Sarnak \cite{sarnak1980prime}, Guillop\'e \cite{guillope1986distribution}, or Lalley \cite{lalley1989renewal}. We refer to the work of Paulin--Pollicott--Schapira \cite{paulin2012equilibrium} for counting results in more general settings.

We also mention a result by Pollicott \cite{pollicott1985asymptotic} which says that, if $(\Sigma, g)$ is of constant curvature $-1$ and if $\gamma_\star$ is not separating,
\begin{equation}\label{eq:pollicott}
\frac{1}{N(L)} \sum_{\substack{\gamma \in \Pcal \\ \ell(\gamma) \leqslant L}} i(\gamma, \gamma_\star) \sim I_\star L
\end{equation}
for some $I_\star > 0$, which means that, the average intersection number between $\gamma_\star$ and geodesics of length not greater than $L$ is about $I_\star L$. We show that this also holds in our context (see \S\ref{sec:bonahon}).

Lalley \cite{lalley1988closed}, Pollicott \cite{pollicott1991homology} and Anantharaman \cite{anantharaman2000precise} investigated the asymptotic growth of the number of closed geodesics satisfying some homological constraints (see also Philips--Sarnak \cite{phillips1987geodesics} and Katsuda--Sunada \cite{katsuda1988homology} for the constant curvature case). They show that for any homology class $\xi \in H_1(\Sigma, \Z)$, we have
$$
\#\{\gamma \in \Pcal,~ \ell(\gamma) \leqslant L,~[\gamma] = \xi\} \sim C \e^{hL} / L^{\grm + 1}
$$
for some $C >0$ independent of $\xi$, where $\grm$ is the genus of $\Sigma$ and $h > 0$ is the entropy of the geodesic flow of $(\Sigma, g)$. Such asymptotics are obtained by studying $L$-functions associated to some characters of $H_1(\Sigma, \Z)$. However our problem is very different in nature; indeed, fixing a constraint in homology boils down to fixing \textit{algebraic} intersection numbers whereas here we are interested in \textit{geometric} intersection numbers. This makes $L$-funtions not well suited for this situation.

In the context of hyperbolic surfaces, Mirzhakani \cite{mirzakhani2008growth, mirzakhani2016counting} computed the asymptotic growth of closed geodesics with prescribed self intersection numbers. Namely, for any $k \in \N$, we have
$$
\#\{\gamma \in \Pcal,~\ell(\gamma) \leqslant L,~ i(\gamma, \gamma) = k\} \sim c_k L^{6(\grm - 1)},
$$
where $i(\gamma, \gamma)$ denote the self-intersection number of $\gamma$ (see also \cite{erlandsson2016counting}).
\subsection*{Organization of the paper}

The paper is organized as follows. In \S\ref{sec:geo} we introduce some geometrical and dynamical tools. In \S\ref{sec:scat} we introduce the dynamical scattering operator which is a central object in this paper and we compute its flat trace. In \S\ref{sec:apriori} we prove a priori bounds on $N(n,L)$. In \S\ref{sec:tauberian} we use a Tauberian argument to estimate certain quantities. In \S\ref{sec:proofthm} we prove Theorem \ref{thm:main}. In \S\ref{sec:bowen} we prove Theorem \ref{thm:equidistribution}.  In \S\ref{sec:billard} we explain how the methods described above apply to the billard problem. In \S\ref{sec:deviation} we show that a typical closed geodesic $\gamma$ satisfies $i(\gamma, \gamma_\star) \approx I_\star \ell(\gamma)$ for some $I_\star > 0.$ Finally in \S\ref{sec:multi} we extend the results to the case where we are given more than one closed geodesic.

\subsection*{Acknowledgements}  I am grateful to Colin Guillarmou for a lot of insightful discussions and for his careful reading of many versions of the present article. I also thank Fr\'ed\'eric Paulin for his help concerning the geometrical lemma \ref{lem:0}. This project has received funding from the European Research Council (ERC) under the European Unions Horizon 2020 research and innovation programme (grant agreement No. 725967).

\section{Geometrical preliminaries}\label{sec:geo}

We recall here some classical geometrical and dynamical notions, and introduce the Pollicott-Ruelle resonances that arise in our setting.

\subsection{Structural equations}
Here we recall some classical facts from \cite[\S7.2]{singer1976lecture} about geometry of surfaces. Denote by $M = S \Sigma = \{(x,v) \in T\Sigma,~\|v\|_g = 1\}$  the unit tangent bundle of $\Sigma$, by $X$ the geodesic vector field on $M$, that is the generator of the geodesic flow $\varphi = (\varphi_t)_{t\in \R}$ of $(\Sigma, g)$, acting on $M$. We have the Liouville one-form $\alpha$ on $M$ defined by
$$
\langle \alpha(z), \eta\rangle = \langle \dd_{(x,v)} \pi(\eta), v \rangle, \quad (x,v) \in M, \quad \eta \in T_{(x,v)}M. 
$$
Then $\alpha$ is a contact form (that is, $\alpha \wedge \dd \alpha$ is a volume form on $M$) and it turns out that $X$ is the Reeb vector field associated to $\alpha$, meaning that
$$
\iota_X \alpha = 1, \quad \iota_X \dd \alpha = 0,
$$ 
where $\iota$ denote the interior product.

We also set $\beta = R_{\pi/2}^*\alpha$ where for $\theta \in \R$, $R_{\theta} : M \to M$ is the rotation of angle $\theta$ in the fibers; finally we denote by $\psi$ the connection one-form, that is the unique one-form on $M$ satisfying
$$
\iota_V \psi = 1, \quad \dd \alpha = \psi \wedge \beta, \quad \dd \beta = - \psi \wedge \alpha,
$$
where $V$ is the vertical vector field, that is, the vector field generating $(R_\theta)_{\theta \in \R}$. Then $(\alpha, \beta, \psi)$ is a global frame of $T^*M$, and we denote $H$ the vector field on $M$ such that $(X, H, V)$ is the dual frame of $(\alpha, \beta, \psi)$. We then have the commutation relations 
$$
[V, X] = H, \quad [V, H] = -X, \quad [X, H] = (\kappa \circ \pi) V,
$$
where $\kappa$ is the Gauss curvature of $(\Sigma, g)$.

\subsection{The Anosov property}\label{subsec:theanosovproperty}
It is well known \cite{anosov1967geodesic} that the flow $(\varphi_t)$ has the Anosov property, that is, for any $z \in M$, there is a splitting 
$$
T_zM = \R X(z) \oplus E_s(z) \oplus E_u(z)
$$
which depends continuously on $z$, and with the following property. 
For any norm $\|\cdot\|$ on $TM$, there exists $C, \nu > 0$ such that
$$
\left\|\dd \varphi_t(z) v\right\| \leqslant C \e^{-\nu t} \|v\| , \quad v \in E_s(z), \quad t \geqslant 0, \quad z \in M,
$$
and 
$$
\left\|\dd \varphi_{-t}(z) v\right\| \leqslant C \e^{-\nu t} \|v\|, \quad v \in E_u(z), \quad t \geqslant 0, \quad z \in M
$$
In fact $E_s(z) \oplus E_u(z) = \ker \alpha(z)$ and there exists two continuous functions $r_\pm : M \to \R$ such that $\pm r_\pm > 0$ and
$$
E_s(z) = \R(H(z) + r_-V(z)), \quad E_u(z) = \R (H(z) + r_+ V(z)), \quad z \in M.
$$
Moreover $r_\pm$ satisfy the Ricatti equation
$$
\pm X r_\pm + r_\pm^2 + \kappa \circ \pi = 0,
$$
where $\kappa$ is the curvature of $\Sigma$.

We will denote by $T^*M = E_0^* \oplus E^*_s \oplus E_u^*$ the splitting defined by (here the bundle $\R X$ is denoted by $E_0$)
$$
E_0^*(E_u \oplus E_s) = 0, \quad E_s^*(E_s \oplus E_0) = 0,  \quad E_u^*(E_u \oplus E_0) = 0.
$$
Then we have $E_0^* = \R \alpha$ and
\begin{equation}\label{eq:e^*}
E_s^* = \R(r_-\beta - \psi), \quad E_u^* = \R(r_+ \beta - \psi).
\end{equation}
Note that this decomposition does not coincide with the usual dual decomposition, but it is motivated by the fact that covectors in $E_s^*$ (resp. $E_u^*$) are exponentially contracted in the future (resp. in the past). Also, we will often consider the symplectic lift of $\varphi_t,$
\begin{equation}\label{eq:Phi_t}
\Phi_t(z,\xi) = (\varphi_t(z),~\dd\varphi_t(z)^{-\top}\cdot\xi), \quad (z,\xi) \in T^*M, \quad t \in \R,
\end{equation}
where $^{-\top}$ denotes the inverse transpose. We have the following lemma (see \cite[\S3.2]{dang2020poincar}).
\begin{lemm}\label{lem:phitbeta} For any $\pm t > 0$ we have
$
\iota_V\Phi_t(\beta) \neq 0$ and $\iota_H\Phi_t(\psi) \neq 0$.
\end{lemm}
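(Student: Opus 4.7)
The plan is to identify the quantities $\iota_V\Phi_t\beta$ and $\iota_H\Phi_t\psi$ with off-diagonal entries of the Jacobi matrix of the geodesic flow, and then to invoke the absence of conjugate and focal points in negative curvature.

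First I would translate the definition of $\Phi_t$ into a statement about one-forms. Since $\Phi_t(z,\xi)=(\varphi_t(z),d\varphi_t(z)^{-\top}\xi)$, the section of $T^*M$ produced by pushing $\omega$ through $\Phi_t$ coincides with $\varphi_{-t}^*\omega$; hence, setting $w=\varphi_{-t}(z)$,
$$
\iota_V\Phi_t\beta(z) = \beta_w\bigl(d\varphi_{-t}(z)\,V(z)\bigr), \qquad \iota_H\Phi_t\psi(z) = \psi_w\bigl(d\varphi_{-t}(z)\,H(z)\bigr).
$$
The problem thus reduces to showing that the $H$-component of $d\varphi_{-t}(z)V(z)$ and the $V$-component of $d\varphi_{-t}(z)H(z)$ do not vanish for $t\ne 0$.

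Since $\mathcal{L}_X\alpha=0$, the plane $\ker\alpha=\R H\oplus\R V$ is $d\varphi_t$-invariant, so I may write
$$
d\varphi_t(w)H(w) = h_H(t)H(\varphi_t w) + v_H(t)V(\varphi_t w), \qquad d\varphi_t(w)V(w) = h_V(t)H(\varphi_t w) + v_V(t)V(\varphi_t w).
$$
A direct application of Cartan's formula to the structural relations $d\alpha=\psi\wedge\beta$, $d\beta=-\psi\wedge\alpha$, $d\psi=-(\kappa\circ\pi)\alpha\wedge\beta$ gives $\mathcal{L}_X\beta=\psi$ and $\mathcal{L}_X\psi=-(\kappa\circ\pi)\beta$; differentiating $h_*(t)=(\varphi_t^*\beta)(*)$ and $v_*(t)=(\varphi_t^*\psi)(*)$ in $t$ then yields the Jacobi system
$$
\dot h_* = v_*, \qquad \dot v_* = -(\kappa\circ\pi)(\varphi_tw)\,h_*,
$$
with $(h_H,v_H)(0)=(1,0)$ and $(h_V,v_V)(0)=(0,1)$. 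The Wronskian $h_Hv_V-h_Vv_H$ is conserved, hence equal to $1$, so inverting this $2\times 2$ matrix gives $d\varphi_{-t}(z)V(z)=-h_V(t)H(w)+h_H(t)V(w)$ and $d\varphi_{-t}(z)H(z)=v_V(t)H(w)-v_H(t)V(w)$; consequently
$$
\iota_V\Phi_t\beta(z) = -h_V(t), \qquad \iota_H\Phi_t\psi(z) = -v_H(t),
$$
where all Jacobi quantities are based at $w$.

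It remains to prove that $h_V$ and $v_H$ do not vanish for $t\ne 0$, which is the standard no-conjugate-and-no-focal-points property of a negatively curved surface. Since $\kappa<0$, the equation $\ddot h = -(\kappa\circ\pi)h$ forces $h$ to be convex wherever it is positive. For $h_V$, the data $(0,1)$ give $\dot h_V(0)>0$, and a simple bootstrap (if both $h_V$ and $v_V$ are positive on some $[0,T]$ then $\dot v_V = -(\kappa\circ\pi)h_V>0$ keeps $v_V$ increasing, hence $h_V$ strictly convex) shows they remain strictly positive for all $t>0$; time reversal handles $t<0$. For $v_H$, the data $(1,0)$ together with $\dot v_H(0)=-\kappa(z)>0$ allow the same bootstrap, yielding $v_H(t)>0$ for $t>0$, and time reversal again covers $t<0$. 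The only real obstacle is bookkeeping—keeping conventions straight for pullback versus pushforward on $T^*M$, tracking base points $z$ versus $\varphi_{-t}(z)$, and fixing the sign of $\mathcal{L}_X\psi$—after which the nonvanishing reduces to elementary ODE analysis.
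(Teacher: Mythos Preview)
Your proof is correct. The identification $\Phi_t(\beta)=\varphi_{-t}^*\beta$ is right, your derivation of the Jacobi system from $\mathcal{L}_X\beta=\psi$ and $\mathcal{L}_X\psi=-(\kappa\circ\pi)\beta$ is clean, and the matrix inversion correctly identifies $\iota_V\Phi_t\beta=-h_V$ and $\iota_H\Phi_t\psi=-v_H$. The convexity bootstrap for $h_V$ and $v_H=\dot h_H$ is the standard no-conjugate-points/no-focal-points argument in negative curvature, and it goes through as you sketch (for $t<0$ your ``time reversal'' comment is a bit terse, but the direct ODE argument---if $h_V$ had a largest negative zero $t_1$ then $h_V<0$ on $(t_1,0)$ forces $\dot h_V$ decreasing there, contradicting $\dot h_V(t_1)\le 0<1=\dot h_V(0)$---works without trouble).

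The paper itself does not give a proof of this lemma; it simply cites \cite[\S3.2]{dang2020poincar}. Your argument is exactly the kind of Jacobi-field computation one would expect to find there, so there is no meaningful difference in approach to discuss---you have supplied a self-contained proof where the paper defers to the literature. One cosmetic point: when you write $\dot v_H(0)=-\kappa(z)>0$, the curvature should be evaluated at the projection of the base point $w$, not at $z$; this does not affect the argument.
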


\subsection{A nice system of coordinates}
In what follows we denote 
$$\tilde \partial = \{(x,v) \in M,~x \in \gamma_\star\} = S\Sigma|_{\gamma_\star}.$$

\begin{lemm}\label{lem:coordinates}
There exists a tubular neighborhood $U$ of $\tilde \partial$ in $M$ and coordinates $(\tau, \rho, \theta)$ on $U$ with
$$
U \simeq (\R/\ell_0\Z)_\tau \times (-\delta, \delta)_\rho \times (\R/{2\pi \Z})_\theta,
$$
such that 
$$|\rho(z)| = \dist_g(\pi(z), \gamma_\star), \quad S_z \Sigma = \{(\tau(z), \rho(z), \theta),~\theta \in \R/2\pi\Z\}, \quad z \in U.$$
Moreover in these coordinates, we have, on $\{\rho = 0\}$,
$$
X =  \cos(\theta) \partial_\tau + \sin(\theta) \partial_\rho, \quad H = -\sin(\theta) \partial_\tau + \cos(\theta) \partial_\rho, \quad V = \partial_\theta,
$$
and
$$ 
\alpha = \cos(\theta) \dd \tau + \sin(\theta) \dd \rho, \quad \beta = -\sin(\theta) \dd \tau + \cos(\theta) \dd \rho, \quad \psi = \dd \theta.
$$
\end{lemm}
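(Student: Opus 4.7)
The plan is to set up Fermi coordinates along $\gamma_\star$ on the base $\Sigma$ and then lift them to $S\Sigma$ by parametrizing the fibers with an angle $\theta$ measured in a natural orthonormal frame. The key simplification that will drive the computation is that, because $\gamma_\star$ is a geodesic, every Christoffel symbol of the Fermi metric vanishes identically on $\{\rho = 0\}$, so the formulas for $X, H, V, \alpha, \beta, \psi$ trivialize on that submanifold.

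First, I would parametrize $\gamma_\star$ by arclength as $\tau \mapsto \gamma_\star(\tau)$, $\tau \in \R/\ell_0\Z$, let $N(\tau)$ be the unit normal along $\gamma_\star$ compatible with the orientation of $\Sigma$, and use the normal exponential map $(\tau, \rho) \mapsto \exp_{\gamma_\star(\tau)}(\rho N(\tau))$ to define a diffeomorphism from $(\R/\ell_0\Z) \times (-\delta, \delta)$ onto a tubular neighborhood $\Omega$ of $\gamma_\star$ for some $\delta > 0$. In these Fermi coordinates, the Gauss lemma and the geodesic property of $\gamma_\star$ force the metric to take the form $g = \dd\rho^2 + f(\tau, \rho)^2 \, \dd\tau^2$ with $f(\tau, 0) = 1$ and $\partial_\rho f(\tau, 0) = 0$, and $|\rho(x)|$ is exactly the $g$-distance from $x$ to $\gamma_\star$. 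I would then lift to $M$ by declaring the unit tangent vector at $(\tau, \rho)$ of angular coordinate $\theta$ to be $(\cos\theta/f)\,\partial_\tau + \sin\theta\,\partial_\rho$, with $\theta$ measured counterclockwise relative to the orientation. This produces $U \simeq (\R/\ell_0\Z)_\tau \times (-\delta, \delta)_\rho \times (\R/2\pi\Z)_\theta$, with $S_z\Sigma = \{\theta \in \R/2\pi\Z\}$, and since $R_\theta$ simply translates the angular coordinate, we have $V = \partial_\theta$ throughout $U$.

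Next I would compute the vector fields on $\{\rho = 0\}$. Since $f(\tau, 0) = 1$ the derivative $\partial_\tau f(\tau, 0)$ vanishes as well, and combined with $\partial_\rho f(\tau, 0) = 0$ this makes every Christoffel symbol
$$
\Gamma^\tau_{\tau\tau} = \tfrac{\partial_\tau f}{f}, \qquad \Gamma^\tau_{\tau\rho} = \tfrac{\partial_\rho f}{f}, \qquad \Gamma^\rho_{\tau\tau} = -f\,\partial_\rho f
$$
vanish on $\{\rho = 0\}$. Writing the geodesic equations in the $(\tau, \rho, \theta)$ chart yields $X = (\cos\theta/f)\,\partial_\tau + \sin\theta\,\partial_\rho + (\partial_\rho f)(\cos\theta/f)\,\partial_\theta$, which at $\rho = 0$ collapses to $X = \cos\theta\,\partial_\tau + \sin\theta\,\partial_\rho$. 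The commutation relation $H = [V, X]$ then gives $H = -\sin\theta\,\partial_\tau + \cos\theta\,\partial_\rho$ at $\rho = 0$ by differentiating the displayed expression for $X$ in $\theta$.

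Finally, for the one-forms, the Liouville form $\alpha$ is extracted directly from its definition $\langle \alpha, \eta \rangle = \langle \dd\pi(\eta), v\rangle_g$: at $\rho = 0$ the metric is the standard Euclidean one in $(\tau, \rho)$, so this gives $\alpha = \cos\theta\,\dd\tau + \sin\theta\,\dd\rho$. The expression for $\beta = R^*_{\pi/2}\alpha$ is then immediate from the substitution $\theta \mapsto \theta + \pi/2$. For $\psi$, I would use that the vanishing of all Christoffel symbols on $\{\rho = 0\}$ means the frame $(\partial_\tau, \partial_\rho)$ is parallel in both the $\tau$- and $\rho$-directions there, so the horizontal distribution for the Levi-Civita connection is $\mathrm{span}(\partial_\tau, \partial_\rho)$ at each point of $\{\rho = 0\}$; together with $\iota_V\psi = 1$ this forces $\psi = \dd\theta$ on $\{\rho = 0\}$. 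I expect no real obstacle beyond bookkeeping: one only needs to fix the sign conventions so that the orientation of $N(\tau)$, the rotations $R_\theta$, and the definitions of $\beta$ and $V$ are mutually consistent, after which the identities follow directly from the Fermi-coordinate expressions.
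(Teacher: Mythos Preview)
Your proof is correct and complete; the construction of Fermi coordinates, the vanishing of all Christoffel symbols on $\{\rho=0\}$ because $\gamma_\star$ is a geodesic, and the subsequent identification of $X,H,V,\alpha,\beta,\psi$ all go through as you describe. The one implicit step is the identification of the paper's $\psi$ (defined by $\iota_V\psi=1$, $\dd\alpha=\psi\wedge\beta$, $\dd\beta=-\psi\wedge\alpha$) with the Levi--Civita connection one-form on the circle bundle $S\Sigma\to\Sigma$; this is standard, but it is worth making explicit since the paper never states it.

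The paper takes a different route. Instead of computing the metric and Christoffel symbols, it works entirely inside the coframe $(\alpha,\beta,\psi)$ and uses the structural relations to derive ODEs in $\theta$. Writing $\alpha=a\,\dd\tau+b\,\dd\rho$ on $\{\rho=0\}$, the identities $\Lie_V\alpha=\beta$ and $\Lie_V\beta=-\alpha$ (which follow from $\dd\alpha=\psi\wedge\beta$, $\dd\beta=-\psi\wedge\alpha$ and $\iota_V\alpha=\iota_V\beta=0$) give $\partial_\theta^2 a+a=0$, $\partial_\theta^2 b+b=0$; the boundary conditions $X(\tau,0,0)=\partial_\tau$ and $X(\tau,0,\pi/2)=\partial_\rho$ then pin down the constants. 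For $\psi$ one writes $\psi=a''\,\dd\tau+b''\,\dd\rho+\dd\theta$, uses $\Lie_V\psi=0$ to get $\theta$-independence of $a'',b''$, and then $\iota_X\psi=0$ at $\theta=0,\pi/2$ forces $a''=b''=0$. So the paper's argument is purely \emph{intrinsic to the unit tangent bundle}: it never names the Fermi metric or a Christoffel symbol, and it does not need to invoke the interpretation of $\psi$ as a connection form. Your approach, by contrast, is more \emph{extrinsic}: you compute everything from the base metric $\dd\rho^2+f^2\,\dd\tau^2$ and the geodesic equation, which has the side benefit of giving formulas for $X$ and $\psi$ on all of $U$ (e.g.\ $\psi=\dd\theta-\partial_\rho f\,\dd\tau$ up to sign), not just on $\{\rho=0\}$. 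Both arguments are of comparable length; the paper's is slicker if one already has the structural equations at hand, while yours is more self-contained.
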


\begin{proof}
For $\tau \in \R/\ell_0\Z$ we set $(x_\tau, v_\tau) = \varphi_\tau(\gamma_\star(0), \dot \gamma_\star(0)).$ We now define, for $\delta > 0$ small enough,
$$
\psi(\tau, \rho, \theta) = R_{\theta - \pi/2} \varphi_\rho(x_\tau, \nu(x_\tau)), \quad (\tau, \rho, \theta) \in \R/\ell_0\Z \times (-\delta, \delta) \times \R/2\pi \Z,
$$
where $R_\eta : S\Sigma \to S\Sigma$ is the rotation of angle $\eta$ and $\nu(x_\tau) = R_{\pi /2}v_\tau$. As $V = \partial_\theta$ and $\iota_V \alpha = \iota_V \beta = 0$, we may write $\alpha(\tau, 0, \theta) = a(\tau, \theta) \dd \tau + b(\tau, \theta) \dd \rho$ and $\beta(\tau, 0, \theta) = a'(\tau, \theta) \dd \tau + b'(\tau, \theta) \dd \rho$ for some smooth functions $a,a',b,b'$. Now since $\dd \alpha = \psi \wedge \beta$ we obtain $\Lie_V \alpha = \iota_V \dd \alpha = \beta$, and similarly $\Lie_V \beta = - \alpha.$ Thus we obtain $a' = \partial_\theta a$, $b' = \partial_\theta b$ and
$$
\partial_\theta^2 a + a =0, \quad \partial_\theta^2 b + b = 0.
$$
In consequence we have $a(\tau, \theta) = a_1(\tau) \cos \theta + a_2(\tau) \sin \theta$ and $b(\tau, \theta) = b_1(\tau) \cos \theta + b_2(\tau) \sin \theta$ for some smooth functions $a_1, a_2, b_1, b_2.$ Moreover, by definition of the coordinates $(\tau, \rho, \theta)$, one has
\begin{equation}\label{eq:xtheta}
X(\tau, 0, 0) = \partial_\tau \quad \text{ and }\quad  X(\tau, 0, \pi / 2) = \partial_\rho.
\end{equation} 
Therefore $a_1 = b_2 = 1$ and $a_2 = b_1 = 0$. We thus get the desired formulas for $\alpha$ and $\beta$. Now writing $\psi = a'' \dd\tau + b''\dd \rho + \dd \theta$ and using $\Lie_V \psi = 0$, we obtain $\partial_\theta a'' = \partial_\theta b'' = 0$. As $\iota_X \psi = 0$ we obtain $a'' = b'' = 0$ by (\ref{eq:xtheta}). The formulae for $X, H, V$ follow.
\end{proof}

\begin{rem}\label{rem:nstar}
If $\partial = \{\rho = 0\}$, we get for any $z = (\tau, 0, \theta) \in \partial$
$$
T_z\partial = \R V(z) \oplus \R(\cos(\theta)X(z) - \sin(\theta) H(z)), \quad N^*_z\partial = \R(\sin(\theta)\alpha(z) + \cos(\theta)\beta(z)).
$$
\end{rem}

\subsection{Cutting the surface along $\gamma_\star$}\label{subsec:cutting}

As mentioned in the introduction, we may see $\Sigma \setminus \gamma_\star$ as the interior of a compact surface $\Sigma_\star$ with boundary consisting of two copies of $\gamma_\star$. By gluing two copies of the annulus $U$ obtained in the preceding subsection on each component of the boundary of $\Sigma_\star$, we construct a slightly larger surface $\Sigma_\delta \supset \Sigma_\star$ whose boundary is identified with the boundary of $U$ (see Figure \ref{fig:Sigma}). 

\begin{lemm}\label{lem:Sigma}
The surface $\Sigma_\delta$ has strictly convex boundary, in the sense that the second fundamental form of the boundary $\partial \Sigma_\delta$ with respect to its outward normal pointing vector is strictly negative.
\end{lemm}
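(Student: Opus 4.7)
The plan is to work in Fermi (geodesic normal) coordinates $(\tau, \rho)$ along $\gamma_\star$, in which the metric reads $g = \dd \rho^2 + f(\tau, \rho)^2 \dd \tau^2$ with $f(\tau, 0) = 1$ and $\partial_\rho f(\tau, 0) = 0$. The first identity says that $\tau$ parametrizes $\gamma_\star$ by arclength, while the second expresses that $\gamma_\star$ is itself a geodesic (equivalently, $\partial_\tau$ along $\{\rho = 0\}$ is tangent to the geodesic and has zero covariant derivative along the normal direction at $\rho = 0$). In these coordinates the two components of $\partial \Sigma_\delta$ are the equidistant curves $\{\rho = +\delta\}$ and $\{\rho = -\delta\}$, and a short Christoffel-symbol computation (using $\Gamma^\rho_{\tau\tau} = -f \partial_\rho f$) shows that the geodesic curvature of $\{\rho = \rho_0\}$, parametrized by increasing $\tau$ and computed with respect to the normal $\partial_\rho$, equals $-(\partial_\rho f / f)(\tau, \rho_0)$.

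Next I would introduce $r(\tau, \rho) = \partial_\rho f(\tau, \rho)/f(\tau, \rho)$. The Jacobi equation $\partial_\rho^2 f + \kappa f = 0$ (which holds because $f \partial_\tau$ is a Jacobi field along the $\rho$-geodesics) translates into the Riccati equation
$$\partial_\rho r + r^2 + \kappa(\tau, \rho) = 0, \qquad r(\tau, 0) = 0.$$
Since $\kappa < 0$, at $\rho = 0$ we have $\partial_\rho r = -\kappa > 0$, so $r > 0$ for $\rho$ slightly positive. A standard maximum-principle argument then shows $r(\tau, \rho) > 0$ for all $\rho \in (0, \delta]$: if $r$ vanished again at some first $\rho_1 > 0$, the Riccati equation would give $\partial_\rho r(\tau, \rho_1) = -\kappa(\tau, \rho_1) > 0$, contradicting the fact that $r$ must be nonpositive there. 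By the symmetric argument, $r(\tau, \rho) < 0$ for $\rho \in [-\delta, 0)$, possibly after shrinking $\delta$ to remain inside the chart.

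Finally, the outward unit normal to $\partial \Sigma_\delta$ is $+\partial_\rho$ along $\{\rho = +\delta\}$ and $-\partial_\rho$ along $\{\rho = -\delta\}$. The second fundamental form therefore equals $-r(\tau, \delta) < 0$ on the first component and $+r(\tau, -\delta) < 0$ on the second, which is the required strict convexity. The only nontrivial step is the Riccati positivity argument; everything else is a routine coordinate computation. An alternative would be to read off the curvature of $\{\rho = \pm\delta\}$ directly from the frame formulas in Lemma~\ref{lem:coordinates} applied away from $\rho = 0$, but the Fermi-coordinate route above seems the cleanest.
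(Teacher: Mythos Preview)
Your proof is correct and follows essentially the same route as the paper: both work in Fermi coordinates $g = \dd\rho^2 + f^2 \dd\tau^2$ along $\gamma_\star$, use that $\partial_\rho f$ vanishes at $\rho = 0$ together with the negative curvature to determine the sign of $\partial_\rho f$ for $\rho \neq 0$, and then read off the sign of the second fundamental form of $\{\rho = \pm\delta\}$. The only cosmetic difference is that the paper argues directly from $\partial_\rho^2 f = -\kappa f > 0$ (so $\partial_\rho f$ is strictly increasing in $\rho$ and vanishes at $0$), whereas you pass through the Riccati equation for $r = \partial_\rho f / f$ and a first-zero argument; since $f > 0$ these are the same sign computation in slightly different packaging.
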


\begin{proof}
In the coordinates defined $(\tau, \rho)$ given by Lemma \ref{lem:coordinates}, the metric $g$ has the form
\begin{equation}\label{eq:metric}
 \dd \rho^2 + f(\rho) \dd \tau^2,
 \end{equation}
for some $f > 0$ satisfying $f'(0) = 0$ and one can check that the scalar curvature writes $\kappa(\tau, \rho) = -f''(\rho)/f(\rho)$. Thus $f'' > 0$ which gives $\pm f'(\rho) > 0$ if $\pm \rho > 0$. Now if $\nabla$ is the Levi-Civita connexion we have
$$
-\langle \nabla_{\partial_\tau} \partial_\rho, \partial_\tau \rangle = -f(\rho) \Gamma_{\rho \tau}^\tau = -f'(\rho) / 2,
$$
which concludes, since $\partial_\rho$ is outward pointing (resp. inward pointing) for at $\{\rho = \delta\}$ (resp. $\{\rho = -\delta\}$).
\end{proof}

\begin{figure}
\includegraphics[scale=0.65]{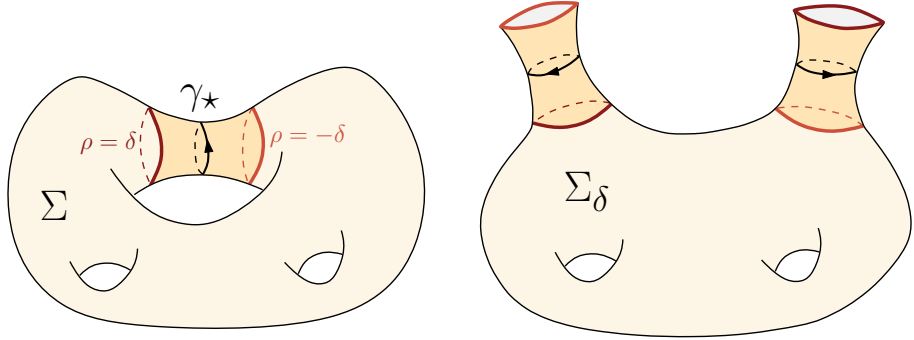}
\caption{The surfaces $\Sigma$ (on the left) and $\Sigma_\delta$ (on the right), in the case where $\gamma_\star$ is not separating. In $\Sigma$, the darker region corresponds to the neighborhood $\pi(U)$ of $\gamma_\star$.}
\label{fig:Sigma}
\medskip
\small{}
\end{figure}

\subsection{The resolvent of the geodesic vector field for open systems}\label{subsec:resolv}
In what follows, we denote by $\Omega^\bullet(M_\delta)$ the set of differential forms on $M_\delta$ and by $\Omega^\bullet_c(M_\delta)$ the elements of $\Omega^\bullet(M_\delta)$ whose support is contained in the interior of $M_\delta$. The set of currents on $M_\delta$, denoted by $\mathcal{D}'^\bullet(M_\delta)$, is defined as the dual of $\Omega_c^\bullet(M_\delta)$ with respect to the pairing
$$
(u ,v) = \int_{M_\delta} u \wedge v, \quad u,v \in \Omega^\bullet(M_\delta).
$$
The geodesic flow $\varphi$ on $M$ induces a flow on $M_\delta = S\Sigma_\delta$ which we still denote by $\varphi$. We define
$$
\ell_{\pm,\delta}(z) = \inf\{t>0,~\varphi_{\pm t}(z) \in \partial M_{\delta}\}, \quad z \in M_{\delta},
$$
the first exit times in the future and in the past. We also set
$$
\Gamma^{\pm}_{\delta} = \{z \in M_{\delta},~\ell_\mp(z) = +\infty\},\quad K_{\delta} = \Gamma^+_{\delta} \cap \Gamma^{-}_{\delta}
$$
and we define the operators $R_{\pm, \delta}(s)$ by
$$
R_{\pm,\delta}(s)\omega(z) = \pm \int_{0}^{\ell_{\mp,\delta}(z)} \varphi_{\mp t}^*\omega(z) \e^{-ts} \dd t, \quad z \in M_{\delta}, \quad \omega \in \Omega^\bullet_c(M_{\delta}),
$$
which are well defined whenever $\Re(s) \gg 1$ (note that our convention of $R_{\pm, \delta}(s)$ differs from \cite{guillarmou2017lens}). Then
$$
(\Lie_X \pm s) R_{\pm,\delta}(s) = \Id_{\Omega_c^\bullet(M_{\delta})},
$$
and for any $(u,v) \in \Omega^\bullet_c(M_{\delta} \setminus \Gamma_{-,\delta}) \times \Omega^\bullet_c(M_{\delta} \setminus \Gamma_{+,\delta})$ we have
$$
\int_{M_{\delta}} \left(R_{+,\delta}(s) u\right) \wedge v = - \int_{M_{\delta}} u \wedge R_{-,\delta}(s) v.
$$
Because the boundary of $\Sigma_{\delta}$ is strictly convex, it follows from \cite[Proposition 6.1]{dyatlov2016pollicott} that the family of operators $R_{\pm}(s)$ extends to a meromorphic family of operators
$$
R_{\pm,\delta}(s) : \Omega^\bullet_c(M_{\delta}) \to \mathcal{D}'^\bullet(M_{\delta}),
$$
satisfying
$$
\WF'(R_{\pm,\delta}(s)) \subset \Delta(T^*M_{\delta}) \cup \Upsilon^{\pm}_{\delta} \cup (E_{\pm, \delta}^* \times E_{\mp, \delta}^*),
$$
where $\Delta(T^*M_{\delta})$ is the diagonal in $ T^*M_\delta \times T^*M_\delta$,
$$
\Upsilon^\pm_{\delta} = \{(\Phi_t(z, \xi), (z, \xi)) \in T^*(M_{\delta} \times M_{\delta}),~  \pm t \geqslant 0,~\langle X(z), \xi \rangle = 0\}, 
$$
and
$$
\quad E^*_{+, \delta} = E_u^*|_{\Gamma^+_{\delta}}, \quad E_{-,\delta}^* = E_s^*|_{\Gamma^-_{\delta}}.
$$
Here, we denoted
$$
\WF'(R_{\pm, \delta}(s)) = \{(z, \xi, z', \xi') \in T^*(M_{\delta} \times M_{\delta}),~(z, \xi, z', -\xi') \in \WF(R_{\pm, \delta}(s))\},
$$
where $\WF$ is the classical H\"ormander wavefront set \cite{hor1}. Near any $s_0 \in \C$, we have the development
$$
R_{\pm, \delta}(s) = Y_{\pm, \delta}(s) + \sum_{j=1}^{J(s_0)} \frac{(X \pm s_0)^{j-1}\Pi_{\pm, \delta}(s_0)}{(s-s_0)^j},
$$
where $Y_{\pm, \delta}(s)$ is holomorphic near $s = s_0$, and $\Pi_{\pm,\delta}(s_0)$ is a finite rank projector satisfying
$$
\WF'(\Pi_{\pm, \delta}(s_0)) \subset E_{\pm, \delta}^* \times E_{\mp, \delta}^*, \quad \supp(\Pi_{\pm, \delta}(s_0)) \subset \Gamma_\delta^{\pm} \times \Gamma_\delta^{\mp},
$$
where we identified $\Pi_{\pm, \delta}(h)$ and its Schwartz kernel.

\subsection{Restriction of the resolvent on the geodesic boundary}

For $\varepsilon > 0$ we will use the slight abuse of notation $\varphi_{\pm \varepsilon}^* \equiv \varphi_{\pm \varepsilon}^* 1_{\{\ell_{\mp,\delta} > \varepsilon\}} : \Omega^\bullet(M_{\delta}) \to \Omega^\bullet(M_{\delta}).$ Let 
$$
\partial = \partial (S\Sigma_\star) = \{(x,v) \in M_\delta,~x \in \gamma_\star \sqcup \gamma_\star\},
$$
and $\partial_0 = S\gamma_\star \sqcup S\gamma_\star \subset \partial.$

\begin{lemm}\label{lem:wfset}
For any $\varepsilon > 0$ small enough, we have
$$
\WF(\varphi_{\mp \varepsilon}^*R_{\pm, \delta}(s)) \cap N^*(\partial \times \partial) = \emptyset,
$$
where 
$$N^*(\partial \times \partial) = \{(z',\xi',z,\xi) \in T^*(M_{\delta} \times M_{\delta}),~\langle \xi',~T_{z'}\partial\rangle = \langle \xi, T_z\partial \rangle = 0\}.$$
\end{lemm}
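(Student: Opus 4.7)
The plan is to apply the wavefront bound for $R_{\pm,\delta}(s)$ recalled in \S\ref{subsec:resolv}, compose it with the pullback $\varphi_{\mp\varepsilon}^*$, and check that each of the three resulting pieces misses $N^*(\partial\times\partial)$. Since the primed wavefront relation of $\varphi_{\mp\varepsilon}^*$ is the graph of $\Phi_{\pm\varepsilon}$, the composition rule yields
\begin{equation*}
\WF'(\varphi_{\mp\varepsilon}^* R_{\pm,\delta}(s)) \subset \bigl\{(\Phi_{\pm\varepsilon}(w), w)\bigr\} \cup \bigl\{(\Phi_{t'}(w''), w'') : \pm t' \geq \varepsilon,\ \langle X(z''), \xi''\rangle = 0\bigr\} \cup (E^*_{\pm,\delta}\times E^*_{\mp,\delta}),
\end{equation*}
the crucial feature being that the flow-shift time is bounded away from zero in the first two pieces.

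In the coordinates of Lemma~\ref{lem:coordinates}, Remark~\ref{rem:nstar} gives $N^*_{(\tau,0,\theta)}\partial = \R(\sin\theta\,\alpha + \cos\theta\,\beta)$, so every conormal to $\partial$ has vanishing $\psi$-component. The third piece is then eliminated immediately: by (\ref{eq:e^*}), every nonzero element of $E_u^*$ or $E_s^*$ is a nonzero multiple of $r_\pm\beta - \psi$ and hence has a $\psi$-component equal to $-1$ up to scalar, so it cannot lie in the normal bundle of $\partial$.

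For the remaining two pieces, I would write $\xi = c(\sin\theta\,\alpha(z) + \cos\theta\,\beta(z)) \in N^*_z\partial$ with $c \neq 0$, and let $t$ be the total shift time. Since $\Lie_X\alpha = 0$ one has $\Phi_t\alpha = \alpha$, so expanding $\Phi_t\beta$ in the basis $(\alpha,\beta,\psi)$ at $\varphi_t(z)$ gives a $\psi$-component of $\Phi_t\xi$ equal to $c\cos\theta\cdot\iota_V\Phi_t\beta$, which is nonzero whenever $\cos\theta \neq 0$ by Lemma~\ref{lem:phitbeta}. For the $\Upsilon$-shifted piece the additional constraint $\langle X(z), \xi\rangle = c\sin\theta = 0$ forces $\sin\theta = 0$, hence $\cos\theta \neq 0$ and the $\psi$-component of $\Phi_t\xi$ is nonzero---contradicting $\Phi_t\xi \in N^*\partial$. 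For the diagonal-shifted piece, the remaining case $\cos\theta = 0$ forces $X(z) = \pm\partial_\rho$ normal to $\partial$, and the Taylor expansion in the chart gives $\rho(\varphi_\varepsilon(z)) = \pm\varepsilon + O(\varepsilon^2)$, so for $\varepsilon$ small enough (uniformly in $z$, by compactness of the set $\{z \in \partial : X(z) \perp \partial\}$) we have $\varphi_\varepsilon(z) \notin \partial$, once again contradicting membership in $N^*(\partial\times\partial)$.

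The main obstacle is this last subcase of the diagonal-shifted piece: unlike the $\Upsilon$ piece, where the transversality constraint $\langle X,\xi\rangle = 0$ rules things out purely from the covector side, here one must additionally argue geometrically (and uniformly in $z$) that the flow leaves $\partial$ by time $\varepsilon$, which is what sets the upper bound on $\varepsilon$. Apart from this, the rest of the proof is bookkeeping on primed vs.\ unprimed wavefront conventions and on the direction of the flow shift for $R_+$ versus $R_-$, with all of the analytic content already packaged in Lemma~\ref{lem:phitbeta}.
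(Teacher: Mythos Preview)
Your proof is correct and follows essentially the same strategy as the paper: decompose $\WF'(\varphi_{\mp\varepsilon}^*R_{\pm,\delta}(s))$ into the three pieces $\Delta_\varepsilon$, $\Upsilon_\varepsilon$, $E^*_{\pm,\delta}\times E^*_{\mp,\delta}$ and eliminate each using Remark~\ref{rem:nstar}, equation~(\ref{eq:e^*}), and Lemma~\ref{lem:phitbeta}. The only cosmetic difference is the case split on the $\Delta_\varepsilon$ piece: the paper first argues geometrically that $\varphi_\varepsilon(\partial\setminus\partial_0)\cap\partial=\emptyset$ for small $\varepsilon$ (by negative curvature, a short geodesic arc with both endpoints on $\gamma_\star$ must lie on $\gamma_\star$), reducing to $z\in\partial_0$ where $\xi\in\R\beta(z)$ and Lemma~\ref{lem:phitbeta} applies directly; you instead split on whether $\cos\theta\neq 0$ (where the $\psi$-component argument from Lemma~\ref{lem:phitbeta} works immediately) or $\cos\theta=0$ (where the normal-incidence Taylor expansion shows $\varphi_\varepsilon(z)\notin\partial$). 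Both partitions are valid and the analytic content is identical.
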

\begin{proof}
We prove the statement for $R_{+,\delta}(s)$. We have by the preceding subsection that
\begin{equation}\label{eq:wfsetepsilon}
\WF'(\varphi_{-\varepsilon}^*R_{+,\delta}(s)) \subset \Delta_\varepsilon \cup \Upsilon_\varepsilon \cup (E_{+,\delta}^* \times E_{-,\delta}^*),
\end{equation}
where
$$
\Delta_\varepsilon = \left\{(\Phi_\varepsilon(z, \xi), (z,\xi)),~(z, \xi) \in T^*M_{\delta}\right\}
$$
and
$$
\Upsilon_\varepsilon = \{(\Phi_t(z, \xi), (z, \xi)),~ t \geqslant \varepsilon,~\langle X(z), \xi \rangle = 0\}.
$$
Now assume that there is $\Xi = (z', \xi', z, \xi)$ lying in 
$$
N^*(\partial \times \partial) \cap \left(\Delta_\varepsilon \cup \Upsilon_\varepsilon \cup (E_{+,\delta}^* \times E_{-,\delta}^*)\right).
$$
If $\Xi \in \Delta_\varepsilon$, then necessarily we have $z,z' \in \partial_0$, because $\varphi_{\varepsilon}(\partial \setminus \partial_0) \cap \partial = \emptyset$ for $\varepsilon >0$ smaller than the injectivity radius, by negativeness of the curvature. We thus have $\xi \in N^*_z\partial = \R \beta(z)$ by Remark \ref{rem:nstar} ; now $\Phi_\varepsilon(\beta(z))$ does not lie in $\R\beta(\varphi_\varepsilon(z))$ by Lemma \ref{lem:phitbeta}, and therefore $\xi = 0$.

If $\Xi \in \Upsilon_\varepsilon$, then there is $T \geqslant \varepsilon$ such that $\Phi_T(z,\xi) = (z',\xi')$ with $\langle \xi, X(z) \rangle = 0$. However by Remark \ref{rem:nstar}, if $(z, \xi) \in N^*_z \partial$ and $\langle \xi, X(z) \rangle = 0$ then $z \in \partial_0$. Thus by what precedes, we obtain $\xi = 0$.
 
Finally, (\ref{eq:e^*}) and Remark \ref{rem:nstar} imply that $N^*\partial \cap E_{\pm,\delta}^* \subset \{0\}.$ Thus we showed $\WF'(\varphi_{\mp\varepsilon}^* R_{\pm, \delta}(s)) \cap N^*(\partial \times \partial) = \emptyset$, which is equivalent to the conclusion of the lemma (indeed, using $\WF$ or $\WF'$ does not matter here since we have $\{(z, \xi, z', \xi'),~(z,\xi, z', -\xi') \in N^*(\partial \times \partial)\} = N^*(\partial \times \partial)$).
\end{proof}

\begin{rem}\label{rem:wfset}
This estimate, combined with \cite[Theorem 8.2.4]{hor1}, implies that the operator $\iota^* \iota_X \varphi_{\mp \varepsilon}^*R_{+,\delta}(s)\iota_*$ is well defined and satisfies
$$
\WF\left(\iota^* \iota_X \varphi_{\mp \varepsilon}^*R_{+,\delta}(s)\iota_*\right) \subset \dd (\iota \times \iota)^{\top} \WF\left(\varphi_{\mp \varepsilon}^*R_{+,\delta}(s)\right)
$$
where $\iota : \partial \hookrightarrow M_\delta$ and $\iota \times \iota : \partial \times \partial \hookrightarrow M_\delta \times M_\delta$ are the inclusions.
\end{rem}

Here the pushforward $\iota_* : \Omega^\bullet(\partial) \to \mathcal{D}'^{\bullet + 1}(M_\delta)$ is defined as follows. If $u \in \Omega^k(\partial)$, we define the current $\iota_*u \in \mathcal{D}'^{k+1}(M_{\delta})$ by
$$
\langle \iota_*u, v \rangle= \int_{\partial} u\wedge \iota^*v, \quad v \in \Omega^{n-k-1}(M_{\delta}).
$$

\section{The scattering operator}\label{sec:scat}

In this section we introduce the dynamical scattering operator $\Sc_\pm(s)$ associated to our problem. By relating the scattering operator to the resolvent described above, we are able to compute its wavefront set. In consequence we obtain that the composition $(\chi \Sc_\pm(s))^n$ is well defined for $\chi \in C^\infty_c(\partial \setminus \partial_0)$, and we give a formula for its flat trace.

For each $x \in \partial \Sigma_\star$, let $\nu(x)$ be the normal outward pointing vector to the boundary of $\Sigma_\star$, and set
$$
\partial_\pm = \{(x,v) \in \partial \Sigma_\star,~\pm \langle \nu(x), v\rangle_g > 0\}.
$$

\subsection{First definitions}
For any $z \in M_\star = S\Sigma_\star$ we define the exit time of $z$ in the future and past by
$
\ell_\pm(z) = \inf \{t>0,~\varphi_{\pm t}(z) \in \partial\}
$
and we set
$$
\Gamma_\pm = \{z \in M,~\ell_\mp(z) = +\infty\}.
$$
Then $\Gamma_+$ (resp. $\Gamma_-)$ is the set of points of $M$ which are trapped in the past (resp. in the future). The scattering map $S_\pm : \partial_\mp \setminus \Gamma_\mp \to \partial_\pm \setminus \Gamma_\pm$ is defined by
$$
S_\pm(z) = \varphi_{\pm \ell_{\pm} (z)}(z), \quad z \in \partial_\mp \setminus \Gamma_\mp,
$$
and satisfies
$
S_\pm \circ S_\mp = \Id_{\partial_{\pm} \setminus \Gamma_\pm}.
$
For $s \in \C$, the scattering operator 
$$\mathcal{S}_\pm(s) : \Omega_c^\bullet(\partial_\mp \setminus \Gamma_\mp) \to \Omega^\bullet_c(\partial_\pm \setminus \Gamma_\pm)$$ is given by
$$
\Sc_\pm(s)\omega = (S_\mp^*\omega) \e^{- s \ell_\pm(\cdot)}, \quad \omega \in \Omega_c^\bullet(\partial_\mp \setminus \Gamma_\mp).
$$

\begin{rem}\label{rem:scatcontinuous}
If $\Re(s)$ is big enough, $\mathcal{S}_\pm(s)$ extends as a map $\Omega^\bullet(\partial) \to C^0(\partial, \bigwedge^\bullet T^*\partial)$ (here $C^0(\partial, \bigwedge^\bullet T^*\partial)$ is the space of continuous forms on $\partial$), by declaring that 
$
\mathcal{S}_\pm(s)\omega(z) = S_\mp^*\omega(z) \e^{- s \ell_\pm(z)}
$
if $z \in \partial_\pm \setminus \Gamma_\pm$ and $\mathcal{S}_\pm(s)\omega(z) = 0$ otherwise. Indeed, this follows from the fact that there is $C > 0$ such that
$$
\|\varphi_t^*\omega\|_{\infty} \leqslant \e^{C|t|} \|\omega\|_{\infty}, \quad t \in \R, \quad \omega \in \Omega^\bullet(M),
$$
where $\|\omega\|_\infty$ is the uniform norm on $C^0(M, \bigwedge^\bullet T^*M).$
\end{rem}

\subsection{The scattering operator via the resolvent}
In this paragraph we will see that $\mathcal{S}_\pm(s)$ can be computed in terms of the resolvent. More precisely, we have the following result.

\begin{prop}\label{prop:scatresolv}
For any $\Re(s)$ large enough we have 
$$
\mathcal{S}_{\pm}(s) = (-1)^N \e^{\pm \varepsilon s}\iota^* \iota_X \varphi_{\mp \varepsilon}^*R_{\pm,\delta}(s)\iota_*
$$
as maps $\Omega^\bullet(\partial) \to \mathcal{D}'^\bullet(\partial)$, where $N : \Omega^\bullet(\partial) \to \N$ is the degree operator, that is, $N(w) = k$ if $w$ is a $k$-form.
\end{prop}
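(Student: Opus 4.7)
The plan is to verify the identity for $\Sc_+$ directly (the $\Sc_-$ case being analogous by time-reversal), starting from the explicit integral representation $R_{+,\delta}(s)u = \int_0^{\ell_{-,\delta}(\cdot)} \varphi_{-t}^* u \,\e^{-ts}\, \dd t$, valid for $\Re(s)$ sufficiently large on smooth compactly supported forms and extending to $\iota_*\omega$ by the continuity of $R_{+,\delta}(s)$ as a map between spaces with prescribed wavefront sets. Combining the group law $\varphi_{-\varepsilon}^* \varphi_{-t}^* = \varphi_{-(t+\varepsilon)}^*$, the substitution $u = t+\varepsilon$ (together with the identity $\ell_{-,\delta}(\varphi_{-\varepsilon}(z)) + \varepsilon = \ell_{-,\delta}(z)$), and the $\varphi$-invariance of $X$ giving $\iota_X \varphi_{-u}^* = \varphi_{-u}^* \iota_X$, I obtain
$$
\iota^* \iota_X \varphi_{-\varepsilon}^* R_{+,\delta}(s)\iota_*\omega(z) = \e^{\varepsilon s} \int_\varepsilon^{\ell_{-,\delta}(z)} \iota^* \varphi_{-u}^* \iota_X(\iota_*\omega)\, \e^{-us}\, \dd u.
$$

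Next I would analyze the integrand pointwise in the coordinates $(\tau, \rho, \theta)$ of Lemma \ref{lem:coordinates}. Locally $\iota_*\omega = \delta(\rho)\, \dd\rho \wedge \tilde\omega = (-1)^N\delta(\rho)\,\tilde\omega\wedge\dd\rho$, where $\tilde\omega$ is the extension of $\omega$ to $M_\delta$ independent of $\rho$, so the Leibniz rule for $\iota_X$ yields
$$
\iota_X(\iota_*\omega) = (X\rho)\,\delta(\rho)\,\tilde\omega - \delta(\rho)\,\dd\rho\wedge \iota_X \tilde\omega,
$$
with $X\rho|_\partial = \sin\theta$ by Lemma \ref{lem:coordinates}. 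For $z \in \partial$, the pullback $\varphi_{-u}^*(\delta(\rho)\,\cdot)$ is supported on $\varphi_u(\partial)$, and the coarea formula applied to the $u$-integration produces, at each isolated time $u_0 \in [\varepsilon, \ell_{-,\delta}(z)]$ satisfying $\varphi_{-u_0}(z) \in \partial$, a contribution weighted by $1/|X\rho(\varphi_{-u_0}(z))|$. The geometry of the setup (the $\varepsilon$-shift excludes $u_0 = 0$, and any trajectory entering the collar $\Sigma_\delta \setminus \Sigma_\star$ exits at $\partial M_\delta$ without ever returning to $\partial$) reduces the sum, for $z \in \partial_+$, to the single contribution at $u_0 = \ell_-(z)$ with $\bar z = S_-(z)$; for $z \in \partial_-$ the integral vanishes, matching the support of $\Sc_+(s)\omega$.

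The product $X\rho(\bar z)/|X\rho(\bar z)|$ combines with the sign from $\iota_*\omega = (-1)^N \delta(\rho)\tilde\omega\wedge\dd\rho$ to produce the factor $(-1)^N$. Meanwhile the second term $\delta(\rho)\,\dd\rho\wedge\iota_X \tilde\omega$ generates, after $\iota^*\varphi_{-u}^*$ and the coarea step, precisely the correction needed to convert the naive pullback $\iota^*\varphi_{-u_0}^*\tilde\omega$ (projection onto $\partial$ along $\partial_\rho$) into the geometric pullback $S_-^*\omega$ (projection along $X$): indeed $\dd\varphi_{-u_0}$ and $\dd S_-$ at $z$ differ by a multiple of $X(\bar z)$, and the $\iota_X\tilde\omega$ term accounts exactly for this defect. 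Reassembling the pieces yields $\iota^* \iota_X \varphi_{-\varepsilon}^* R_{+,\delta}(s)\iota_*\omega = (-1)^N \e^{\varepsilon s}\Sc_+(s)\omega$, as claimed. The main technical hurdle is this reconciliation of the two pullbacks (along $\partial_\rho$ versus along $X$): it amounts to recognizing that $\iota_X$ converts a normal delta-distribution into a tangential one projected along the flow, which is exactly the operation natural for the Poincaré-section style scattering map $S_-$.
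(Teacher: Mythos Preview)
Your approach is genuinely different from the paper's, and the underlying computation is essentially correct, but as written it has a real gap at exactly the point that constitutes the main difficulty of the proposition.

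\textbf{What the paper does.} The paper proceeds in two steps. First (Lemma~\ref{lem:scatresolv}) it proves the identity only as maps $\Omega^\bullet_c(\partial_\mp\setminus\Gamma_\mp)\to\mathcal{D}'^\bullet(\partial_\pm\setminus\Gamma_\pm)$, by a duality argument: given $u$ supported away from $\Gamma_-$, it builds an $(\Lie_X+s)$-invariant extension $\phi$, pairs against a test form $h$, integrates by parts via Stokes, and reads off the boundary contribution. Second, for the full statement it approximates $\iota_*u$ by the smooth forms $u_n=u(\tau,\theta)\chi_n(\rho)\,\dd\rho$, computes $f_n=\iota^*\varphi_{-\varepsilon}^*\iota_X R_{+,\delta}(s)u_n$ explicitly, and uses dominated convergence (with the geometric input that trajectories re-enter the collar only after a uniform time $L>0$) to show $f_n\to\Sc_+(s)u$ in $\mathcal{D}'(\partial)$.

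\textbf{What you do, and what is missing.} Your computation is the formal $n\to\infty$ limit of the paper's second step: you replace $\chi_n(\rho)\,\dd\rho$ directly by $\delta(\rho)\,\dd\rho$ and integrate the resulting $\delta$-function along the flow via coarea. The algebraic content --- in particular your last paragraph, where differentiating $\rho(S_-(z))=0$ gives $\varphi_{-u_0}^*\dd\rho=(X\rho)(\bar z)\,\dd\ell_-$, which is exactly the correction turning $\iota^*\varphi_{-u_0}^*\tilde\omega$ into $S_-^*\omega$ --- is correct and is a nice observation. But the argument as stated is only a pointwise identity on the open set $\partial_+\setminus\Gamma_+$, where the coarea step is legitimate because $X\rho(\bar z)\neq 0$ and $\ell_-(z)<\infty$. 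The proposition asserts equality in $\mathcal{D}'(\partial)$, and the right-hand side is a priori only a distribution; you have not ruled out singular contributions supported on $\Gamma_+\cup\partial_0$. Establishing this is precisely what the paper's approximation-plus-dominated-convergence argument accomplishes, and it is not automatic: one needs the uniform bound $|f_n(z)|\leqslant C$ independent of $n$, which in turn relies on the convexity estimate ensuring returns to the collar are separated by a fixed time $L$. Your line ``extending to $\iota_*\omega$ by the continuity of $R_{+,\delta}(s)$ as a map between spaces with prescribed wavefront sets'' does not supply this: wavefront bounds control where singularities can live, not whether the distributional limit coincides with the pointwise formula on the complement.

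Finally, a minor point: your concluding identity has the exponent on the wrong side (you want $\iota^*\iota_X\varphi_{-\varepsilon}^*R_{+,\delta}(s)\iota_*\omega=(-1)^N\e^{-\varepsilon s}\Sc_+(s)\omega$, since the $\e^{+\varepsilon s}$ in the proposition multiplies the resolvent side), and the sign bookkeeping around $\text{sgn}(X\rho(\bar z))$ versus $(-1)^N$ is not carried through --- these interact with orientation conventions for $\iota_*$ and should be checked rather than asserted.
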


An immediate consequence is the

\begin{corr}\label{cor:mero}
The scattering operator $\mapsto \mathcal{S}_\pm(s) : \Omega^\bullet(\partial) \to \mathcal{D}'^\bullet(\partial)$ extends as a meromorphic family of $s \in \C$ with poles of finite rank, with poles contained in the set of Pollicott-Ruelle resonances of $\Lie_X$, that is, the set of poles of $s \mapsto R_{\pm, \delta}(s)$.
\end{corr}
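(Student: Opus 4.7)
The plan is to deduce the corollary directly from Proposition \ref{prop:scatresolv}, using the meromorphy of the resolvent $R_{\pm,\delta}(s)$ together with the wavefront set bound of Lemma \ref{lem:wfset}. First I would fix $\varepsilon > 0$ small enough that Lemma \ref{lem:wfset} applies, and recall from \S\ref{subsec:resolv} (via \cite{dyatlov2016pollicott}) that $R_{\pm,\delta}(s) : \Omega^\bullet_c(M_\delta) \to \mathcal{D}'^\bullet(M_\delta)$ admits a meromorphic extension to $\C$ whose poles are exactly the Pollicott-Ruelle resonances of $\Lie_X$, with principal parts given by the finite rank projectors $\Pi_{\pm,\delta}(s_0)$.

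Next I would justify that each factor in the formula
\[
\mathcal{S}_\pm(s) = (-1)^N \e^{\pm \varepsilon s}\,\iota^*\,\iota_X\,\varphi_{\mp\varepsilon}^*\,R_{\pm,\delta}(s)\,\iota_*
\]
makes sense as a continuous operator $\Omega^\bullet(\partial) \to \mathcal{D}'^\bullet(\partial)$ for every $s \in \C$ at which $R_{\pm,\delta}(s)$ is holomorphic. The pushforward $\iota_*$ is continuous from $\Omega^\bullet(\partial)$ into $\mathcal{D}'^{\bullet+1}(M_\delta)$ with wavefront set contained in $N^*\partial$, so applying $\varphi_{\mp\varepsilon}^* R_{\pm,\delta}(s)$ is licit by the H\"ormander composition criterion provided $\WF'(\varphi_{\mp\varepsilon}^* R_{\pm,\delta}(s))$ does not meet $N^*\partial$ in its right fiber; this is precisely the content of Lemma \ref{lem:wfset}, combined with Remark \ref{rem:wfset}. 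Multiplication by the smooth vector field $X$ preserves wavefront sets, and the same wavefront estimate (now on the left factor) guarantees that the pullback $\iota^*$ to $\partial$ is well defined. Finally, checking that this composition coincides with the continuous extension of $\mathcal{S}_\pm(s)$ provided by Remark \ref{rem:scatcontinuous} for $\Re(s) \gg 1$ reduces to Proposition \ref{prop:scatresolv}.

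Once the operator-valued map $s \mapsto (-1)^N \e^{\pm\varepsilon s}\,\iota^*\iota_X \varphi_{\mp\varepsilon}^* R_{\pm,\delta}(s)\,\iota_*$ is seen to be meromorphic with the stated pole set, the corollary follows: by Proposition \ref{prop:scatresolv} this extension agrees with $\mathcal{S}_\pm(s)$ on the half-plane $\{\Re(s) \gg 1\}$, so by uniqueness of meromorphic continuation the two agree everywhere they are both defined. The finite rank property of the polar parts is inherited from the corresponding statement for $\Pi_{\pm,\delta}(s_0)$, since each such projector has finite rank and the restriction/pullback/pushforward operations preserve this.

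The main obstacle is the wavefront bookkeeping in the second step: one must verify that applying $\iota^*\iota_X$ on the left and $\iota_*$ on the right to the meromorphically extended $\varphi_{\mp\varepsilon}^* R_{\pm,\delta}(s)$ remains licit uniformly in $s$ (in particular, at poles one must compose with $(X\pm s_0)^{j-1}\Pi_{\pm,\delta}(s_0)$, whose wavefront set is still controlled by $E^*_{\pm,\delta}\times E^*_{\mp,\delta}$, disjoint from $N^*\partial$ by \eqref{eq:e^*} and Remark \ref{rem:nstar}). This is exactly the calculation already carried out in Lemma \ref{lem:wfset}, so the argument closes cleanly.
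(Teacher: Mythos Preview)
Your argument is correct and is exactly the approach the paper takes: the corollary is stated as an immediate consequence of Proposition \ref{prop:scatresolv}, and the well-definedness of the composition $\iota^*\iota_X\varphi_{\mp\varepsilon}^*R_{\pm,\delta}(s)\iota_*$ follows from Lemma \ref{lem:wfset} and Remark \ref{rem:wfset}. You have simply spelled out the details that the paper leaves implicit.
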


Before proving Proposition \ref{prop:scatresolv}, we start by an intermediate result.

\begin{lemm} \label{lem:scatresolv}
We have
$
\mathcal{S}_{\pm}(s) = (-1)^N\e^{\pm \varepsilon s}\iota^* \iota_X \varphi_{\mp \varepsilon}^*R_{\pm,\delta}(s)\iota_*
$
 as maps $\Omega^\bullet_c(\partial_{\mp} \setminus \Gamma_\mp) \to \mathcal{D}'^\bullet(\partial_{\pm} \setminus \Gamma_\pm).$
\end{lemm}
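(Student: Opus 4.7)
The plan is to verify the identity for $\Re(s) \gg 1$, where the defining integral for $R_{+,\delta}(s)$ converges absolutely, and then extend to all $s$ by meromorphic continuation — the wavefront estimate of Lemma~\ref{lem:wfset} together with Remark~\ref{rem:wfset} ensures that the composition on the right-hand side is a well-defined map $\Omega^\bullet(\partial)\to\mathcal{D}'^\bullet(\partial)$, while both sides act continuously on $\Omega^\bullet_c(\partial_{\mp}\setminus\Gamma_\mp)$ in the region of convergence thanks to Remark~\ref{rem:scatcontinuous}. It therefore suffices to check equality pointwise, by evaluating at a generic $z\in\partial_+\setminus\Gamma_+$, for smooth $\omega$ with compact support in $\partial_-\setminus\Gamma_-$.

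First I would work in the coordinates $(\tau,\rho,\theta)$ of Lemma~\ref{lem:coordinates} near a boundary point, in which $\partial = \{\rho = 0\}$ and $X = \cos\theta\,\partial_\tau + \sin\theta\,\partial_\rho$ on $\partial$, and write $\iota_*\omega = \delta(\rho)\,d\rho\wedge\tilde\omega$ (up to the sign $(-1)^N$ from moving $d\rho$ past $\omega$), where $\tilde\omega$ is any smooth extension of $\omega$ to a neighborhood of $\partial$ in $M_\delta$. Setting $y=\varphi_{-\varepsilon}z$ and plugging into the defining integral of $R_{+,\delta}(s)$, the backward trajectory $t\mapsto\varphi_{-t}y$ meets $\partial$ transversally exactly once before exiting $M_\delta$, namely at $S_-(z)\in\partial_-$ at time $t^* = \ell_+(z)-\varepsilon$. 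The transversality $X\rho = \sin\theta\neq 0$ away from $\partial_0$ supplied by Lemma~\ref{lem:coordinates} converts the boundary delta into a delta in $t$ with Jacobian $|\sin\theta(S_-(z))|^{-1}$, concentrating the integral:
$$R_{+,\delta}(s)\iota_*\omega(y) = \frac{\e^{-t^* s}}{|\sin\theta(S_-(z))|}\,\varphi_{-t^*}^*(d\rho\wedge\tilde\omega)(y).$$

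Next I would apply $\iota_X$ (which commutes with $\varphi_t^*$ by flow invariance of $X$) and expand $\iota_X(d\rho\wedge\tilde\omega) = (X\rho)\,\tilde\omega - d\rho\wedge\iota_X\tilde\omega$, then pull back by $\varphi_{-\varepsilon}^*$ (which composes the flow to $\varphi_{-\ell_+(z)}^*$) and restrict via $\iota^*$. The resulting sum of two extension-dependent pieces is shown to combine into the extension-free expression $\sin\theta(S_-(z))\,S_-^*\omega|_z$; this relies on the explicit formula $dS_-|_z = d\varphi_{-\ell_+(z)}|_z - X(S_-(z))\otimes d\ell_-|_z$ together with the identity $d\ell_-|_z = \varphi_{-\ell_+(z)}^*d\rho|_z/\sin\theta(S_-(z))$, which is forced by the requirement that $dS_-|_z$ maps $T_z\partial$ into $T_{S_-(z)}\partial$. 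The cancellation $\sin\theta/|\sin\theta|$ produces the sign $-1$ on $\partial_-$, and together with the ordering sign $(-1)^N$ and the prefactor $\e^{\varepsilon s}$ absorbing the time-shift, reconstitutes $\Sc_+(s)\omega(z) = S_-^*\omega(z)\,\e^{-s\ell_+(z)}$. The case of $\mathcal{S}_-(s)$ is entirely analogous with $\partial_\pm$, $R_\pm$, $\varphi_{\mp\varepsilon}$ interchanged and $\sin\theta > 0$ on $\partial_+$ giving the opposite sign contribution.

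The main obstacle is the careful bookkeeping of signs and Jacobian factors. The cancellation of $\sin\theta$ between the Jacobian from the boundary delta and the coefficient produced by $\iota_X$ is the geometric mechanism driving the identity, and it is precisely the explicit form of $X$ along $\partial$ furnished by Lemma~\ref{lem:coordinates} that makes this cancellation possible. The fact that the two extension-dependent terms coalesce into an intrinsic expression — rather than relying on any clever choice of $\tilde\omega$ — is a second delicate point, and it is really what reveals that the composite $\iota^*\iota_X\varphi_{\mp\varepsilon}^*R_{\pm,\delta}(s)\iota_*$ encodes the scattering map.
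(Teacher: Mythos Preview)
Your approach is correct in outline but takes a genuinely different route from the paper. You compute directly: write $\iota_*\omega$ as a boundary delta $\delta(\rho)\,d\rho\wedge\tilde\omega$, feed it into the defining integral for $R_{+,\delta}(s)$, localize via transversality of the flow to $\partial$, and then track the Jacobian and sign cancellations through $\iota_X$ and $\iota^*$. The paper instead argues by duality: it builds a smooth form $\phi$ on $M_\delta$ solving $(\Lie_X+s)\phi=0$ with $\phi|_{\partial_-}=u$ and observes $\phi|_{\partial_+}=\Sc_+(s)u$; pairing $\phi$ against a test form $h\in\Omega^\bullet_c(M_\delta\setminus\Gamma_+)$ and applying Stokes' theorem reduces the pairing to a boundary integral involving $\iota_X R_{-,\delta}(s)h$, from which the identity for $\Sc_-(s)$ is read off directly (and $\Sc_+(s)$ follows by replacing $X$ with $-X$). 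Your route is more explicit and exposes the geometric mechanism --- the $\sin\theta$ produced by $\iota_X d\rho$ canceling the Jacobian $|\sin\theta|^{-1}$ from the boundary delta --- while the paper's route hides all of the sign and extension-independence bookkeeping inside a single integration by parts, so it never has to confront the cancellation you correctly identify as the delicate point.

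Two small remarks on your write-up. First, for $z\in\partial_+$ the travel time back to $S_-(z)$ is $\ell_-(z)$, not $\ell_+(z)$; this is just a notational slip. Second, the claim that the backward trajectory from $y=\varphi_{-\varepsilon}z$ meets $\partial$ exactly once before exiting $M_\delta$ deserves a one-line justification: after crossing into the annulus at $S_-(z)$ the coordinate $\rho$ is convex along the flow (this is the content of the proof of Lemma~\ref{lem:Sigma}), so it increases monotonically to $\delta$ and the trajectory leaves $M_\delta$ without returning to $\{\rho=0\}$.
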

\begin{rem}
Note that Proposition \ref{prop:scatresolv} is not a direct consequence of Lemma \ref{lem:scatresolv}. Indeed, the operator $\mathcal{Q}_{\varepsilon, \pm} = \e^{\pm \varepsilon s}\iota^* \iota_X \varphi_{\mp \varepsilon}^*R_{+,\delta}(s)\iota_*$ could hide some singularities near $\Gamma_\pm$ ; Proposition \ref{prop:scatresolv} tells us that is it not the case, at least for $\Re(s)$ large enough.
\end{rem}
\begin{proof}
Let $u \in \Omega^\bullet_c\left(\partial_- \setminus \Gamma_-\right),$ and $U' \subset \partial_-$ be a neighborhood of $\supp u$ such that $\overline U'$ does not intersect $\partial_0$. Let $\varepsilon > 0$ small such that
$$
z \in \partial_- \implies \ell_+(z) > \varepsilon.
$$
The existence of such an $\varepsilon$ follows from the negativeness of the curvature. Let
$$
U = \left\{(t,z) \in \R \times U',~-\ell_{-,\delta}(z) < t < \varepsilon\right\}.
$$
Then $U$ is diffeomorphic to a tubular neighborhood of $U'$ in $M_{\delta}$ via $(t,z) \mapsto \varphi_t(z).$ Let $\chi \in C^\infty(\mathbb{R})$ such that $\chi \equiv 1$ near $(-\infty, 0]$ and $\chi \equiv 0$ on $(\varepsilon / 2, +\infty)$. Set, in the above coordinates,
$$
\psi(t,z) = \chi(t)\e^{-ts}u(z) \in {\bigwedge}^\bullet T_{(t,z)}^*{M_{\delta}},
$$
where we see $u(z)$ as a form on $T^*_{(t,z)}M$ by declaring $\iota_{\partial_t} u(z) = 0$. We extend $\psi$ by $0$ on $M$ and we set
$$
\phi= \psi - R_{+,\delta}(s)(\Lie_X + s) \psi.
$$
Then $\phi$ is smooth (since $\supp \psi \cap \Gamma_- = \emptyset$) and $(\Lie_X + s)\phi= 0$, and we have
$$
\phi|_{\partial_-} = u, \quad \phi|_{\partial_+} = \mathcal{S}_{+}(s)u,
$$
where $\mathcal{S}_{+}(s) = \mathcal{S}_+(s)|_{\Omega^\bullet_c(\partial_{-} \setminus \Gamma_-)}.$ Let $h \in \Omega^\bullet_c(M_{ \delta} \setminus \Gamma_+)$, so that $R_{-,\delta}(s)h$ is smooth. We have
$$
\begin{aligned}
\int_{M_{\delta}} \phi\wedge h &= \int_{M_{\delta}} \psi \wedge h - \int_{M_{\delta}}R_{+,  \delta}(s)(\Lie_X + s)\psi \wedge h \\
&= \int_{M_{\delta}} \psi \wedge h + \int_{M_{\delta}} (\Lie_X + s) \psi \wedge R_{-,\delta}(s) h \\
&= \int_{M_{\delta}} \psi \wedge h - \int_{M_{\delta}} \psi \wedge (\Lie_X - s)R_{-,\delta}(s) h + \int_{\partial M_{ \delta}} \iota_X \left(\psi \wedge R_{-,\delta}(s)h \right) \\
&= \int_{\partial M_{ \delta}} \iota_X \left(\psi \wedge R_{-,\delta}(s)h \right) \\
&=  (-1)^{\deg \psi}\int_{\partial_{-,\delta}} \psi \wedge \iota_X R_{-,\delta}(s)h,
\end{aligned}
$$
since $\iota_X \psi = 0$ and $\psi$ has no support near $\partial_{+, \delta}$. Now we let $\Phi : \partial_{-} \to \partial_{-,\delta}$ be defined by $\Phi(z) = \varphi_{-\ell_{-,\delta}(z)}(z)$. Assume that the support of $h$ does not intersect $U$. Then a change of variable gives
$$
\Phi^*(\iota_X R_{-,\delta}(s) h)|_{\partial_{-, \delta}} = \iota_XR_{-,\delta}(s)h \e^{-s\ell_{-,\delta}(\cdot)},
$$
As we have $\Phi^*(\psi|_{\partial_{-,\delta}}) = (\psi|_{\partial_{-}}) \e^{+s\ell_{-,\delta}(\cdot)} = u \e^{+s\ell_{-,\delta}(\cdot)}$ by definition of $\psi$, we obtain
$$
\int_{M_{\delta}} \phi\wedge h = (-1)^{\deg u} \int_{\partial_{-}} u \wedge \iota^*(\iota_X R_{-,\delta}(s) h).
$$
Now because $(\Lie_X-s)R_{-, \delta}(s) h = h$, we get $(\Lie_X -s)R_{-,\delta}(s)h = 0$ near $U$ and thus $\varphi_{\varepsilon}^*R_{-,\delta}(s)h = \e^{\varepsilon s} R_{-,\delta}(s) h$ near $U$. Let $v \in \Omega^\bullet_c(\partial_{+} \setminus \Gamma_+)$ and $h_n \in \Omega^\bullet_c(M_{\delta}\setminus \Gamma_+), ~n \in \N,$ with $\supp h_n \cap U = \emptyset$, and such that $h_n \to \iota_*v$ in $\mathcal{D}'^\bullet(M_{\delta}).$ Then 
$$
\int_{\partial_{+}} (\mathcal{S}_{+}(s) u) \wedge v = (-1)^{\deg u} \e^{-\varepsilon s} \int_{\partial_{-}} u \wedge \iota^* \iota_X \varphi_{\varepsilon}^* R_{-,\delta}(s)\iota_*v,
$$
because $\phi|_{\partial_{+}} = \mathcal{S}_{+}(s)u.$
Since $\int_{\partial_{+}} \mathcal{S}_{+}(s) u \wedge v = \int_{\partial_{-}} u \wedge \mathcal{S}_{-}(s) v$, we obtain
$$
\mathcal{S}_{-}(s)= (-1)^{\deg u} \e^{-\varepsilon s}\iota^* \iota_X \varphi_{\varepsilon}^* R_{-,\delta}(s)\iota_*
$$
as maps $\Omega^\bullet_c(\partial_+ \setminus \Gamma_+) \to \Omega^\bullet_c(\partial_- \setminus \Gamma_-)$. We can replace $X$ by $-X$ to obtain the desired formula for $\mathcal{S}_{+}(s)$, which concludes.
\end{proof}

\begin{proof}[Proof of Proposition \ref{prop:scatresolv}]

We prove the proposition for $C^\infty(\partial)$, the proof for $\Omega^\bullet(\partial)$ being the same. Let $u \in C^\infty(\partial)$ and write $u = u(\tau, \theta)$. Let $\chi \in C^\infty_c(\R, [0,1])$ such that $\int_\R\chi = 1$, $\chi(0) \neq 0$, $\chi \equiv 0$ on $\R \setminus (-\delta/2, \delta/2)$, and $\chi > 0$ on $(-\delta/2, \delta/2)$. For $n \in \N_{\geqslant 1}$ we set $\chi_n = n\chi(n\cdot)$, so that $\chi_n$ converges to the Dirac measure on $\R$ as $n \to +\infty$. We define $u_n \in \Omega^1_c(M_{\delta})$ in the $(\tau, \rho, \theta)$ coordinates by
$$
u_n= u(\tau, \theta) \chi_n(\rho) \dd \rho.
$$
Then $u_n \to \iota_*u$ in $\mathcal{D}'(M_{\delta})$. Consider
$$
f_n = \iota^* \varphi_{-\varepsilon}^* \iota_X R_{+,\delta}(s) u_n, \quad n \geqslant 1.
$$
Then for $\Re(s)$ big enough, we have $f_n \in C^0(\partial)$ for any $n \in \N.$ For $z \in \partial$ let 
$$
\tilde u(z) = \left\{\begin{matrix} u(S_-(z))\e^{-s\ell_-(z)} &\text{ if } &z \in \partial_+ \setminus \Gamma_+, \\ 0 &\text{ if not.}& \end{matrix} \right.
$$
Then $\tilde u$ is continuous and we claim that $f_n \to \tilde u$ in $\mathcal{D}'^0(\partial)$ when $n \to +\infty.$ Indeed, notice that
$$
\iota_X u_n(\tau, \rho, \theta) = u(\tau, \theta) \chi_n(\rho) (X\rho)(\tau, \rho, \theta).
$$
Let $F = \{\rho \leqslant \delta/2\}$. Since the neighborhood $\{|\rho| < \delta / 2\}$ is strictly convex, there exists $L>0$ such that for any $z \in F$ and $T > 0$ such that $\varphi_{-T}(z) \in F$, we have
\begin{equation}\label{eq:L}
\Bigl(\forall t \in (0,T),~\varphi_{-t}(z)\notin F\Bigr) \quad \implies \quad T \geqslant L.
\end{equation}
Now take $z \in \partial \setminus \Gamma_-$. Then the set $\{t \in [\varepsilon, \ell_{+, \delta}(z)],~\varphi_t(z) \in F\}$ is a finite union of closed intervals, say
$$
\{t \geqslant \varepsilon,~\varphi_t(z) \in F\} = \bigcup_{k=0}^{K(z)} [a_k(z), b_k(z)],
$$
with $a_k(z) \leqslant b_k(z)$ and $b_k(z) < a_{k+1}(z)$ for every $k$.
We set $\rho(t) = \rho(\varphi_{-t}(z))$ for any $t \geqslant 0$ ; we have
$$
\begin{aligned}
|f_n(z)| &\leqslant \|u\|_{\infty} \int_{\varepsilon}^{\ell_{-,\delta}} \left|(\chi_n\circ \rho)(\varphi_{-t}(z))\right| \left|(X\rho)(\varphi_{-t}(z))\right| \e^{-ts} \dd t
\\ 
& \leqslant \|u\|_\infty \sum_{k=0}^{K(z)} \e^{-sa_k(z)} \int_{a_k(z)}^{b_k(z)} \chi_n(\rho(t)) |\rho'(t)| \dd t
\end{aligned}
$$
Looking at the geodesic equation for the metric (\ref{eq:metric}), we see that $\pm X^2\rho > 0$ if $\pm \rho > 0$; thus we may separate each interval $[a_{k}(z), b_k(z)]$ into two subintervals on which $|\rho'| > 0$ and change variables to get 
$$
\int_{a_k(z)}^{b_k(z)} \chi_n(\rho(t)) |\rho'(t)| \dd t \leqslant 2\int_\R \chi_n(\rho) \dd \rho \leqslant 2.
$$
By (\ref{eq:L}), we have $a_k(z) \geqslant kL$ for any $k$. Therefore we obtain, since each $f_n$ is continuous,
$$
|f_n(z)| \leqslant \displaystyle{\frac{2\|u\|_\infty}{1-\e^{-sL}}}, \quad z \in \partial, \quad n \geqslant 1.
$$
For any $z \in \partial_-$ we have $\{\varphi_t(z),~ t\geqslant \varepsilon\} \cap \partial_- = \emptyset$ by negativeness of the curvature. Thus $f_n(z) \to 0$ as $n \to +\infty$ for any $z \in \partial_-$, and by dominated convergence we have
$$
\int_{\partial_{-}} f_n  v \to 0, \quad n \to \infty, \quad v \in \Omega^\bullet(\partial).
$$
for any $v \in \Omega^\bullet(\partial)$. Now let $\eta  > 0$. Let $\chi_\pm \in C^\infty_c(\partial_{\pm} \setminus \Gamma_\pm)$ such that $\chi_- \equiv 1$ on $\supp (\chi_+ \circ S_+)$, and $\mathrm{vol}(\supp(1-\chi_+)) < \eta$. Such functions exist as $\mathrm{Leb}(\Gamma_+ \cap \partial) = 0$, cf. \cite[\S2.4]{guillarmou2017lens}\footnote{Actually \cite{guillarmou2017lens} implies $\mathrm{Leb}(\Gamma_{+, \delta} \cap \partial_{+, \delta}) = 0$. However $\partial_+$ is diffeomorphic to $\partial_{+, \delta}$ via $z \mapsto \varphi_{\ell_{+, \delta}(z)}(z)$, and this map sends $\Gamma_+ \cap \partial$ on $\Gamma_{+, \delta} \cap \partial_{+,\delta}$.}. We have
$$
\int_{\partial_{+}} f_n v = \int_{\partial_{+}} f_n \chi_+ v + \int_{\partial_{+}} f_n (1-\chi_+)v.
$$
Thus, up to replacing $u$ by $\chi_-u$, we have by Lemma \ref{lem:scatresolv}
$$
\int_{\partial_{+}} f_n \chi_+ v \to \int_{\partial_{+}} \tilde u \chi_+ v.
$$
By what precedes there is $C > 0$ such that for any $n \geqslant 1$
$$
\left|\int_{\partial_{+}} \tilde u(1-\chi_+)v\right| < C \eta, \quad \left|\int_{\partial_{+}} f_n(1-\chi_+)v\right| < C\eta.
$$
Summarizing the above facts, we obtain that for $n \geqslant 1$ big enough, one has
$$
\left| \int_{\partial} f_n v - \int_\partial \tilde u v \right| \leqslant 4 C \eta.
$$
Thus $f_n \to \tilde u$ in $\mathcal{D}'^0(\partial)$, which concludes the proof.
\end{proof}

\subsection{Composing the scattering maps}

Recall that $\partial$ has two connected components $\partial^{(1)}$ and $\partial^{(2)}$ that we can identify in a natural way. We denote by $\psi : \partial \to \partial$ the map exchanging those components via this identification (in particular $\psi (\partial_\pm) = \partial_\mp$), and we set 
$$\tilde \Sc_\pm(s) = \psi^* \circ \Sc_\pm(s).$$
Also we denote by $\Psi = T^*\partial \to T^*\partial$ the symplectic lift of $\psi$ to $T^*\partial,$ that is 
$$\Psi(z,\xi) = (\psi(z), \dd \psi_z^{-\top} \xi), \quad (z, \xi) \in T^*\partial.$$

\begin{lemm}\label{lem:composition}
Let $\chi \in C^\infty_c(\partial \setminus \partial_0)$. Then for any $n \geqslant 1$, the composition $\left(\chi \tilde \Sc_\pm(s)\right)^n : \Omega^\bullet(\partial) \to \mathcal{D}'^\bullet(\partial)$ is well defined.
\end{lemm}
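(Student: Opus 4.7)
The plan is to apply H\"ormander's criterion on the composition of operators with distributional kernels: if $A, B : \Omega^\bullet(\partial) \to \mathcal{D}'^\bullet(\partial)$ are such operators whose wavefront sets satisfy a suitable ``no matching side-zero in the middle variable'' condition, then $B \circ A$ is well-defined. The key observation will be that multiplication by $\chi \in C^\infty_c(\partial \setminus \partial_0)$ kills the ``bad'' part of $\WF'(\tilde \Sc_\pm(s))$, so that compositions can be iterated arbitrarily many times.

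First I would determine $\WF'(\tilde \Sc_\pm(s))$ in $T^*(\partial \times \partial) \setminus 0$. Combining Proposition \ref{prop:scatresolv} with Remark \ref{rem:wfset} and Lemma \ref{lem:wfset}, and pulling back by $\Psi^{-1}$ on the output factor, this wavefront is contained in the restriction to $T^*(\partial \times \partial)$ of $\Delta_\varepsilon \cup \Upsilon_\varepsilon \cup (E^*_{+,\delta} \times E^*_{-,\delta})$. I would then analyze each piece using Remark \ref{rem:nstar}, the formulas of Lemma \ref{lem:coordinates}, and \eqref{eq:e^*}. The diagonal piece $\Delta_\varepsilon$ has both endpoints on $\partial$ only when both lie in $\partial_0$, by strict convexity (Lemma \ref{lem:Sigma}). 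For an element $(\Phi_t(z, \xi), (z, \xi))$ of $\Upsilon_\varepsilon$ with $\xi$ conormal to $\partial$ and $\langle X, \xi\rangle = 0$, writing $\xi = c(\sin\theta\, \alpha + \cos\theta\, \beta)$ in the coordinates of Lemma \ref{lem:coordinates} yields $\langle X, \xi\rangle = c\sin\theta$, which forces $c = 0$ or $\sin\theta = 0$; in the latter case $z \in \partial_0$. Finally, covectors in $E^*_{\pm, \delta}$ have nonzero $\psi$-component by \eqref{eq:e^*}, hence are never conormal to $\partial$.

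Next, for $T : \Omega^\bullet(\partial) \to \mathcal{D}'^\bullet(\partial)$, introduce the ``side-zero'' sets
\[
\WF'_L(T) = \{(a, \alpha) \in T^*\partial \setminus 0 : \exists b \in \partial,\ (a, \alpha; b, 0) \in \WF'(T)\},
\]
\[
\WF'_R(T) = \{(b, \beta) \in T^*\partial \setminus 0 : \exists a \in \partial,\ (a, 0; b, \beta) \in \WF'(T)\}.
\]
The previous analysis shows that any element of $\WF'_L(\tilde \Sc_\pm(s))$ has its output variable in $\partial_0$; since $\psi$ preserves $\partial_0$, the support condition imposed by $\chi$ therefore gives
\[
\WF'_L(\chi \tilde \Sc_\pm(s)) = \emptyset.
\]
H\"ormander's composition theorem \cite[Theorem 8.2.14]{hor1} then ensures that $(\chi \tilde \Sc_\pm(s)) \circ T$ is well-defined for any $T$, since the criterion $\WF'_R(T) \cap \WF'_L(\chi \tilde \Sc_\pm(s)) = \emptyset$ is trivially satisfied (the properness condition is automatic as $\partial$ is compact). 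By induction on $n$, the composition $(\chi \tilde \Sc_\pm(s))^n$ is well-defined; moreover the leftmost factor still carries $\chi$, so one even has $\WF'_L((\chi \tilde \Sc_\pm(s))^n) = \emptyset$ at every stage, which lets the induction continue. The main technical step, to be executed with some care, is the projection computation of $\WF'(\tilde \Sc_\pm(s))$ from the ambient wavefront in $T^*(M_\delta \times M_\delta)$ down to $T^*(\partial \times \partial)$, together with the verification that every conormal contribution is confined to $\partial_0$.
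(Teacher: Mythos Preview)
Your claim that $\WF'_L(\chi\tilde\Sc_\pm(s)) = \emptyset$ is not correct, and this is the crux of the matter. The piece $E^*_{+,\delta}\times E^*_{-,\delta}$ of the ambient wavefront is a genuine product: it contains points $(z',\xi';z,0)$ with $\xi'\in E^*_{+,\delta}(z')\setminus\{0\}$ (only the joint covector $(\xi',\xi)$ is required to be nonzero). Your observation that nonzero elements of $E^*_{-,\delta}$ are never conormal to $\partial$ only rules out the case where the ambient input covector is nonzero; it says nothing when that covector is already $0$. After pullback by $\iota\times\iota$ and by $\psi$, such points give elements of $\WF'_L(\tilde\Sc_\pm(s))$ lying in $\Psi(E^*_{+,\partial})$ over base points $\psi(z')\in\partial$, and there is no reason for $z'$ (hence $\psi(z')$) to lie in $\partial_0$: it merely lies in $\Gamma^+_\delta\cap\partial$. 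The cutoff $\chi$ therefore does not kill this contribution.

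What actually makes the composition work is a transversality coming from hyperbolicity, not an emptiness of $\WF'_L$. The paper shows that $\WF'_L(\chi\tilde\Sc_\pm(s))\subset\Psi(E^*_{\pm,\partial})$ while $\WF'_R(\chi\tilde\Sc_\pm(s))\subset E^*_{\mp,\partial}$, and then verifies $\Psi(E^*_{\pm,\partial})\cap E^*_{\mp,\partial}\subset\{0\}$ over $\supp\chi$: in the coordinates of Lemma~\ref{lem:coordinates} this boils down to $r_\mp(\psi(z))\neq r_\pm(z)$, which is exactly the statement that $E_s(z')\cap E_u(z')=\{0\}$ for the point $z'\in M$ represented by both $z$ and $\psi(z)$. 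This is the step your argument is missing. Moreover, the induction is not automatic once $\WF'_L$ is nonempty: at each stage the left side-zero set grows (points of $\Psi(E^*_{\pm,\partial})$ get propagated by $\Phi_t$ and then hit again by $\Psi$), and one must check anew that it stays disjoint from $E^*_{\mp,\partial}$, again by the same Anosov transversality.
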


\begin{proof}
We first prove the lemma for $n = 2$. According to \cite[Theorem 8.2.14]{hor1}, it suffices to show that
\begin{equation}\label{eq:intersection}
\begin{aligned}
\{(z,\xi),~&\exists z' \in \partial,~(z',0,z,\xi) \in \WF'(\chi \tilde \Sc_{\pm}(s))\}  \\
&\cap \{(z,\xi),~\exists z' \in \partial,~(z,\xi,z',0) \in \WF(\chi \tilde \Sc_{\pm}(s))\} = \emptyset.
\end{aligned}
\end{equation}
We have
$$
\WF(\Sc_\pm(s)) \subset \dd (\iota \times \iota)^\top \left(\Delta_\varepsilon \cup \Upsilon_\varepsilon \cup (E_{+, \delta}^* \times E_{-, \delta}^*)\right),
$$
where $\Delta_\varepsilon, \Upsilon_\varepsilon$ are defined in the proof of Lemma \ref{lem:wfset}.
As $\chi$ is supported far from $\partial_0$, we have $(\varphi_\varepsilon(z'), z') \notin \partial \times \partial$ for any $z' \in \supp \chi$, and for any $\eta \in T^*_{z'}M_\delta$ such that $\langle X(z'), \eta \rangle = 0$, we have
\begin{equation}\label{eq:diinj}
\dd \iota^\top(z',\eta) = 0 \implies \eta = 0.
\end{equation}
This implies that the first term (denoted $A$) of the intersection (\ref{eq:intersection}) is contained in $E_{\mp,\partial}^*$
while the second term (denoted $B_1$) is contained in $\Psi(E_{\pm,\partial}^*)$, where $E_{\pm, \partial}^* = (\dd \iota)^\top (E_{\pm, \delta}^*)$. Now we claim that $\Psi(E_{\pm, \partial}^*) \cap E_{\mp, \partial}^* \subset \{0\}$ far from $\partial_0$. By Lemma \ref{lem:coordinates} and \S\ref{subsec:theanosovproperty} one has, for any $z = (\tau, 0, \theta) \in \partial^{(j)} \cap \Gamma_\pm$, 
$$
E_{\pm, \partial}^*(z) = \R (\dd \iota)^{\top}_z (r_\pm(z) \beta(z) - \psi(z)) = \R(- \sin(\theta) r_\pm(z) \dd \tau - \dd \theta),
$$
since $\iota(z,\theta) = (z, 0, \theta)$. Now we claim that $r_\mp(\psi(z)) \neq r_\pm(z)$ for all $z$. Indeed, the contrary would mean that $E_s(z') \cap E_u(z') \neq \{0\}$ for some $z' \in M$ (represented by both $z$ and $\psi(z)$ in $M_\delta$), which is not possible. Now we have $\sin(\theta) \neq 0$ for $z \notin \partial_0$. As a consequence (\ref{eq:intersection}) is true, since $\supp \chi \cap \partial_0 = \emptyset$. This concludes the case $n = 2$. 

By \cite[Theorem 8.2.14]{hor1} we also have the bound
$$
\WF\bigl((\chi\tilde \Sc_\pm(s))^2\bigr) \subset \left(\WF'(\chi \tilde \Sc_\pm(s)) \circ \WF'(\chi \tilde \Sc_\pm(s))\right)  \cup (B_1 \times \underline 0) \cup (\underline 0 \times A),
$$
where $\underline 0$ denote the zero section in $T^*\partial.$ This formula gives that the set $B_2$ defined by
$$
B_2 = \left\{(z,\xi),~\exists z' \in \partial,~(z,\xi,z',0) \in \WF\bigl((\chi\tilde \Sc_{\pm}(s)^2)\bigr)\right\}
$$
is equal to 
$$
\begin{aligned}
\Bigl\{(z,\xi)\in T^*\partial,~\exists z', z'' \in \partial,~(z, \xi, &z', -\eta) \in \WF(\chi \tilde \Sc_\pm(s)) \\
&\text{ and } (z', \eta, z'', 0) \in \WF(\chi \tilde \Sc_\pm(s))\Bigr\} \cup B_1.
\end{aligned}
$$
Since $\Psi(E_{\pm,\partial}^*) \cap E_{\mp, \partial}^* \subset \{0\}$ we obtain
$$
B_2 = \{(z, \xi),~(z, \xi, z', \eta) \in \dd(\iota \times \iota)^\top(\Upsilon_\varepsilon) \text{ for some }\eta \in \Psi(E_{\pm, \partial}^*)\} \cup B_1.
$$
Finally, we get, by definition of $\Upsilon_\varepsilon$,
$$
B_2 = \{\Psi \circ (\dd \iota)^{\top} (\Phi_{t}(z, \zeta)),~\langle X(z), \zeta\rangle = 0,~\dd \iota^\top(z, \zeta) \in \Psi(E_{\pm, \partial}^*),~\varphi_t(z) \in \partial,~t \geqslant \varepsilon\} \cup B_1.
$$
By (\ref{eq:diinj}), if $\langle X(z), \eta \rangle = 0$ and $\dd \iota^\top(z, \zeta) \in \Psi(E_{\pm, \partial}^*)$, then $(z, \zeta) \in \Psi(E_{\pm, \delta}^*)$ (of course if $z \in \supp \chi$).
This implies that $B_2$ can intersect $E_{\mp, \partial}^*$ only in a trivial way. Indeed, for any $t \geqslant \varepsilon$ and $(z, \zeta) \in \Psi(E_{\pm, \delta}^*)$ such that $\varphi_t(z) \in \partial$, we have $\Psi(\Phi_t(z, \zeta)) \notin E_{\mp, \partial}^* \setminus \{0\}$, since as before it would mean that $E^u(z') \cap E^s(z') \neq \{0\}$ for $z' \in M$ representing both $\varphi_t(z)$ and $\psi(\varphi_t(z))$. Thus $A \cap B_2 = \emptyset$, which shows that $\bigl(\chi\tilde \Sc_\pm(s)\bigr)^3$ is well defined. We may iterate this process to obtain that $(\chi\Sc_\pm(s))^n$ is well defined for every $n \geqslant 1.$ 

\end{proof}

\subsection{The flat trace of the scattering operator}\label{subsec:flattrace}

Let $\mathcal{A} : \Omega^\bullet(\partial) \to \mathcal{D}'^\bullet(\partial)$ be an operator such that
$
\WF'(\mathcal{A}) \cap \Delta = \emptyset,
$
where $\Delta$ is the diagonal in $T^*(\partial \times \partial)$.
Then the flat trace of $\mathcal{A}$ is defined as
$$
\strf \mathcal{A} = \langle \iota^*_\Delta A, 1 \rangle,
$$
where $\iota_\Delta : z \mapsto (z,z)$ is the diagonal inclusion and $A \in \mathcal{D}'^{n}(\partial \times \partial)$ is the Schwartz kernel of $\mathcal{A}$, i.e.
$$
\int_\partial \mathcal{A}(u) \wedge v = \int_{\partial \times \partial} A \wedge \pi_1^*u \wedge \pi_2^*v, \quad u,v \in \Omega^\bullet(\partial),
$$
where $\pi_j : \partial \times \partial \to \partial$ is the projection on the $j$-th factor ($j=1,2$). In fact we have
\begin{equation}\label{eq:alternatedtrace}
\strf (A) = \sum_{k=0}^2 (-1)^{k+1} \mathrm{tr}^\flat(A_k),
\end{equation}
where $\mathrm{tr}^\flat$ is the transversal trace of Attiyah-Bott \cite{atiyah1967lefschetz} and $A_k$ is the operator $C^\infty\bigl(\partial, \wedge^kT^*\partial\bigr) \to \mathcal{D}'\bigl(\partial, \wedge^k T^*\partial\bigr)$ induced by $A$ on the space of $k$-forms.

The purpose of this section is to compute the flat trace of $\mathcal{S}_\pm(s)$. In what follows, for any closed geodesic $\gamma : \R/\Z \to \Sigma$, we will denote 
$$
I_\star(\gamma) = \{z \in S\Sigma|_{\gamma_\star}, z = (\gamma(\tau), \dot \gamma(\tau)) \text{ for some }\tau \in \R/\Z\}
$$
 the set of incidence vectors of $\gamma$ along $\gamma_\star$, and
$$
 I_{\star, \pm}(\gamma) = p_\star^{-1}(I_\star(\gamma)) \cap \partial_\mp
$$
where $p_\star : S\Sigma_\star \to S\Sigma$ is the natural projection.

 \begin{prop}\label{prop:computeflattrace}
Let $\chi \in C^\infty_c(\partial \setminus \partial_0)$. For any $n \geqslant 1$, the operator $(\chi\tilde{\mathcal{S}}_\pm(s))^n$ has a well defined flat trace and for $\Re(s)$ big enough we have
 \begin{equation}\label{eq:strf}
 \strf \left(\chi \tilde \Sc_\pm(s)\right)^n = n \sum_{i(\gamma) = n} \frac{\ell^\#(\gamma)}{\ell(\gamma)} \e^{-s\ell(\gamma)} \left(\prod_{z \in I_{\star, \pm}(\gamma)} \chi(z)\right)^{\ell(\gamma) / \ell^\#(\gamma)}, 
 \end{equation}
 where the sum runs over all closed geodesics $\gamma$ of $(\Sigma, g)$ (not necessarily primitive) such that $i(\gamma, \gamma_\star) = n$. Here $\ell(\gamma)$ is the length of $\gamma$ and $\ell^\#(\gamma)$ its primitive length.
 \end{prop}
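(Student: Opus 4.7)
The plan is to (a) check that $\strf(\chi\tilde\Sc_\pm(s))^n$ is well defined by extending the wavefront analysis of Lemma \ref{lem:composition}, and (b) compute it via the Atiyah--Bott trace formula after recognising $\chi\tilde\Sc_\pm(s)$ as a twisted pullback operator. Concretely, set $T = S_\mp\circ\psi$, a partially defined self-diffeomorphism of $\partial_\mp$, so that $\chi\tilde\Sc_\pm(s)\omega = \chi\cdot e^{-s\ell_\pm\circ\psi}\cdot T^*\omega$; by the chain rule, $(\chi\tilde\Sc_\pm(s))^n$ is the twisted pullback by $T^n$ with multiplier
$$
\mu_n(z) = \prod_{j=0}^{n-1}\chi(T^jz)\,e^{-s\ell_\pm(\psi(T^jz))}.
$$

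For point (a) I would argue by induction on $n$. By \cite[Theorem~8.2.14]{hor1} and the description of $\WF'(\chi\tilde\Sc_\pm(s))$ extracted in Lemma \ref{lem:composition}, the wavefront set of $(\chi\tilde\Sc_\pm(s))^n$ is contained in a union of iterated scattering graphs $\bigl((\dd\iota)^\top\Psi\Phi_{t_1}\cdots\Psi\Phi_{t_n}(z,\zeta),\,(z,\zeta)\bigr)$ with $t_j\geq\varepsilon$ and $\langle X,\zeta\rangle=0$, together with terms lying in products involving $E^*_{\pm,\partial}$. A diagonal point of the second type forces $\xi=0$ because $\Psi(E^*_{\pm,\partial})\cap E^*_{\mp,\partial}\subset\{0\}$ away from $\partial_0$ (already exploited in Lemma \ref{lem:composition}). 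A diagonal point of the first type would produce a non-zero covector $\zeta$ with $\langle X,\zeta\rangle=0$ invariant under some $\Phi_S$ with $S\geq n\varepsilon>0$, which is excluded by the Anosov property. Hence $\WF'\bigl((\chi\tilde\Sc_\pm(s))^n\bigr)\cap\Delta = \emptyset$ and the flat trace is defined.

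Combining (\ref{eq:alternatedtrace}) with the Atiyah--Bott trace formula for the action of $(\chi\tilde\Sc_\pm(s))^n$ on each $\wedge^k T^*\partial$ then yields
$$
\strf(\chi\tilde\Sc_\pm(s))^n = \sum_{T^nz=z}\mu_n(z)\,\frac{-\det(I-\dd T^n_z)}{|\det(I-\dd T^n_z)|}.
$$
At any periodic point of $T^n$ the differential $\dd T^n_z$ is hyperbolic with two \emph{positive} eigenvalues: the line bundles $\R(H+r_\pm V)$ are continuous (\S\ref{subsec:theanosovproperty}), so the Anosov splitting is orientable and orientation-preserved by $\varphi_t$, hence also by $T^n$. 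Consequently $\det(I-\dd T^n_z)<0$ and the quotient above equals $+1$.

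It remains to rewrite the sum over fixed points of $T^n$ as a sum over closed geodesics. A primitive periodic orbit of $T$ of minimal period $m$ is in canonical bijection with a primitive closed geodesic $\gamma^\#$ of $(\Sigma,g)$ with $i(\gamma^\#,\gamma_\star)=m$; the orbit itself is $I_{\star,\pm}(\gamma^\#)$ and the sum of its exit times equals $\ell(\gamma^\#)$. A fixed point of $T^n$ is thus a pair $(\gamma^\#,k)$ with $km=n$, corresponding to the closed (possibly non-primitive) geodesic $\gamma=(\gamma^\#)^k$ of intersection number $i(\gamma,\gamma_\star)=n$ and length $\ell(\gamma)=k\ell(\gamma^\#)$, providing $m=n\ell^\#(\gamma)/\ell(\gamma)$ distinct representatives in $\{T^nz=z\}$, all sharing the multiplier
$$
\mu_n(z) = e^{-s\ell(\gamma)}\Bigl(\prod_{z\in I_{\star,\pm}(\gamma)}\chi(z)\Bigr)^{\ell(\gamma)/\ell^\#(\gamma)}
$$
(the exponent arises because the $k$-fold iterate visits each of the $m$ orbit points $k$ times). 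Summing over $\gamma$ gives exactly (\ref{eq:strf}). The main obstacle will be the orientation/sign verification in the Atiyah--Bott contribution together with the careful combinatorial bookkeeping for non-primitive iterates; convergence of the geodesic sum for $\Re(s)$ large is guaranteed by the a priori bounds on $N(n,L)$ established in \S\ref{sec:apriori}.
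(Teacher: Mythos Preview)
Your overall strategy---identify $(\chi\tilde\Sc_\pm(s))^n$ as a weighted pullback by $T^n$, verify the wavefront condition along the diagonal, invoke Atiyah--Bott, and reorganise the fixed-point sum over closed geodesics---matches the paper's, and your sign argument and combinatorial bookkeeping are correct.

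There is, however, a genuine gap at the step where you ``combine (\ref{eq:alternatedtrace}) with the Atiyah--Bott trace formula''. The Atiyah--Bott formula in \cite{atiyah1967lefschetz} is stated for a globally defined smooth map on a compact manifold. Your map $T^n$ is only defined on the open set $\partial\setminus\tilde\Gamma_n^\pm$, and the weight $\mu_n$ involves $\ell_{\pm,n}$, which blows up at the boundary of this set. The flat trace $\strf(\chi\tilde\Sc_\pm(s))^n$ is defined abstractly as $\langle\iota_\Delta^*K,1\rangle$, and one must rule out contributions to this pairing coming from near $\tilde\Gamma_n^\pm$; the convergence of the geodesic sum (your last sentence) only shows the right-hand side of (\ref{eq:strf}) makes sense, not that it equals the left-hand side.

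The paper handles exactly this point: it introduces a cutoff $\tilde\chi_L\in C^\infty_c(\partial\setminus\tilde\Gamma_n^\pm)$ (equal to $1$ where $\ell_{\pm,n}\leqslant L$), applies Atiyah--Bott to the compactly supported operator $\tilde\chi_L(\chi\tilde\Sc_\pm(s))^n$, and then proves $\langle\iota_\Delta^*K_{\chi,\pm,n}(s),1-\tilde\chi_L\rangle\to 0$ as $L\to\infty$. This last step is the analytical heart of the argument: it requires uniform control on the derivatives of $\ell_{\pm,n}$ near $\tilde\Gamma_n^\pm$ (Lemma~\ref{lem:estonell}, which in turn uses a lower bound on the exit angle via comparison geometry), together with the identity $\iota_\Delta^*K_{\chi,\pm,n}(s_0+s)=e^{-s\ell_{\pm,n}}\iota_\Delta^*K_{\chi,\pm,n}(s_0)$ justified by Lemma~\ref{lem:elementary}. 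Without this truncation-and-limit argument your proof is incomplete.
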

 
\begin{proof}
The proof that the intersection
$
\WF'\bigl((\chi\tilde \Sc_\pm(s))^n\bigr) \cap \Delta
$
is empty is very similar to the arguments we already gave, for example in Lemma \ref{lem:composition}. Since it might be repetitive, we shall omit it.

For any $n \geqslant 1$ we define the set $\tilde \Gamma_n^\pm \subset \partial$ by
$$
\complement \tilde \Gamma_n^\pm = \{z \in \partial, ~\tilde S_{\pm}^k(z) \text{ is well defined for }k=1, \dots, n\},
$$
where $\tilde S = \psi \circ S$. Also we set
$$
\tilde \ell_{\pm,n}(z) =  \ell_\pm(z) + \ell_\pm(\tilde S_\pm(z)) + \cdots + \ell_\pm({\tilde S_{\pm}^{n-1}(z)}), \quad z \in \complement \tilde \Gamma_n^\pm,
$$
where $\ell_\pm(z) = \inf\{t > 0,~\varphi_{\pm t}(z) \in \partial\}$, with the convention that $\tilde \ell_{\pm,n}(z) = +\infty$ if $z \in \tilde \Gamma_n^\pm.$ We will need the following

\begin{lemm}\label{lem:estonell}
Let $n\geqslant 1$. For any $k \geqslant 1$, there exists $C_k > 0$ such that 
$$
\|\dd^k \ell_{\pm,n}(z) \| \leqslant C_k \exp(C_k \ell_{\pm,n}(z)), \quad z \in \complement \tilde \Gamma_n^\pm.
$$
\end{lemm}

\begin{proof}
In what follows, $C_k$ is a constant depending only on $k$, which may change at each line. First, notice that 
$
\|\dd^k \varphi_t(z)\| \leqslant C_k \e^{C_k |t|}
$
for any $t \in \R$ and $z \in M_\delta$ such that $\varphi_t(z) \in M_\delta$, for some constant $C_k$ (see for example \cite[Proposition A.4.1]{bonthonneau2015resonances}). Moreover, we have
$$
\dd S_\pm(z) = \dd [\varphi_{\ell_\pm(z)}](z) + X(S_\pm(z)) \dd\ell_\pm(z), \quad z \notin \tilde \Gamma_1^\pm.
$$
By induction we obtain that for any $k$
\begin{equation}\label{eq:dk}
\|\dd^k S_\pm(z)\| \leqslant C_k \exp(C_k \ell_\pm(z)) + C_k\sum_{j=1}^k \|\dd^j \ell_{\pm}(z)\|^{m_j}, \quad m_j \in \N, \quad j = 1, \dots, k,
\end{equation}
for any $z \notin \tilde \Gamma_1^\pm$. This inequality, combined with the fact that $S_\pm(\complement \tilde \Gamma_k^\pm) = \complement \Gamma_{k-1}^\pm,$ implies that to prove the lemma it suffices to show the estimate
\begin{equation}\label{eq:wanted}
\|\dd^k \ell_\pm(z)\| \leqslant C_k \exp(C_k \ell_\pm(z)), \quad z \notin \tilde \Gamma_1^\pm.
\end{equation}
Let $(\rho, \theta, \tau)$ be the coordinates defined near $\partial$ given by Lemma \ref{lem:coordinates}. Then $\rho(S_\pm(z)) = 0$ for $z \in \tilde \Gamma^\pm_1$ and thus
\begin{equation}\label{eq:3.5}
(X\rho)(S_\pm(z)) \dd \ell_\pm(z) = - \dd \rho(S_\pm(z)) \circ \dd[\varphi_{\ell_\pm(z)}](z), \quad z \notin \tilde \Gamma_1^\pm.
\end{equation}
Now Lemma \ref{lem:coordinates} gives
$
(X\rho)(S_\pm(z)) = \sin \bigl(\theta(S_\pm(z))\bigr).
$
As the curvature is negative, we see from Topogonov's comparison theorem \cite[Theorem 73]{Berger_2003} (and classical trigonometric identities for hyperbolic triangles) we see that for some constant $C$ we have
\begin{equation}\label{eq:sinus}
\left|\sin\bigl(\theta(S_\pm(z))\bigr)\right| \geqslant C \exp(-C\ell_{\pm}(z)), \quad z \notin \tilde \Gamma_1^\pm.
\end{equation}
Therefore, we obtain for any $z \in \tilde \Gamma^\pm_1$,
$$
\begin{aligned}
\|\dd \ell_\pm(z)\| &\leqslant C^{-1} \exp(C\ell_\pm(z))\| \dd \rho(S_\pm(z)) \| \| \dd[\varphi_{\ell_\pm(z)}](z)\| \\
&\leqslant C \e^{C\ell_\pm(z)}.
\end{aligned}
$$
Now, using repetively (\ref{eq:dk}), (\ref{eq:3.5}) and (\ref{eq:sinus}), we obtain (\ref{eq:wanted}) by induction.
\end{proof}

Consider $\tilde \chi \in C^\infty(\R, [0,1])$ such that $\tilde \chi \equiv 1$ on $[0,1]$ and $\tilde \chi \equiv 0$ on $[2, +\infty[$, and set $\tilde \chi_L(z) = \tilde \chi(\ell_{\pm,n}(z)-L)$ for $z \in \partial$. Then $\tilde \chi_L \in C^\infty_c(\partial \setminus \tilde \Gamma_n^\pm)$ and by (\ref{eq:alternatedtrace}) we see that the Atiyah-Bott trace formula \cite[Corollary 5.4]{atiyah1967lefschetz} reads in our case
 \begin{equation}\label{eq:atiyah}
 \langle \iota_\Delta^*K_{\chi, \pm,n}(s), \tilde \chi_L \rangle 
  = \sum_{(\tilde S_\mp)^n(z) = z} \e^{-s\ell_{\pm,n}(z)} \tilde \chi_L(z) \prod_{k=0}^{n-1}\chi(\tilde S_\mp^k(z)),
 \end{equation}
where $K_{\chi, \pm, n}(s)$ is the Schwartz kernel of $(\chi\Sc_\pm(s))^n$. Indeed, it is proven in \cite{atiyah1967lefschetz} that for any diffeomorphism $f : \partial \to \partial$ with isolated nondegenerate fixed points, it holds
$$
\mathrm{tr}^\flat(F_k) = \sum_{f(z) = z} \frac{\tr \wedge^k \dd f(z)}{|\mathrm{det}(1-\dd f(z))|}
$$
where $F_k : \Omega^k(\partial) \to \Omega^k(\partial)$ is defined by $F_k\omega = f^*\omega$ and $\wedge^k \dd f(z)$ is the map induced by $\dd f(z)$ on $\wedge^kT^*_z\partial.$ Since $\sum_k (-1)^k \mathrm{tr}(\wedge^k \dd f(z)) = \det(1 - \dd f(z))$ it holds 
\begin{equation}\label{eq:strftotrf}
\strf(F) = \sum_k (-1)^{k+1} \mathrm{tr}^\flat(F_k) = \sum_{f(z) = z} \sgn\mathrm{det}(1 - \dd f(z)).
\end{equation}
Now note that $\tilde \chi_L(\chi\tilde \Sc_\pm(s))^n$ is by definition the operator given by
\begin{equation}\label{eq:operator}
\omega~ \mapsto~ \tilde \chi_L(\cdot) \left(\prod_{k=0}^{n-1} \chi\left({\tilde S_\mp}^k(\cdot)\right) \right) \e^{-s \ell_{\pm,n}(\cdot)}\left({\tilde S_\mp}^n\right)^*w.
\end{equation}
Moreover, $\sgn \mathrm{det}\left(1- \dd \tilde S_\mp^n(z)\right) = -1$ for any $z$ such that $\tilde S_\mp^n(z) = z$. Indeed, for such a $z$, $\dd S_\mp^n(z)$ is conjugated to the linearized Poincar\'e map $P_z = \dd (\varphi_{\ell_{\pm,n}(z)})(z)|_{E^u(z) \oplus E^s(z)}$, which satisfies $\det(1-P_z) < 0$ as the matrix of $P_z$ in the decomposition $E^u(z) \oplus E^s(z)$ reads $\begin{pmatrix} \lambda & 0 \\ 0 & \lambda^{-1} \end{pmatrix}$ for some $\lambda > 1$ (since $\varphi_t$ preserves the volume form $\alpha \wedge \dd \alpha$). Thus (\ref{eq:strftotrf}) and (\ref{eq:operator}) imply (\ref{eq:atiyah}).
 
As $L \to +\infty$, the right hand side of (\ref{eq:atiyah}) converges to
 $$
 n \sum_{i(\gamma, \gamma_\star) = n}\frac{\ell^\#(\gamma)}{\ell(\gamma)} \e^{-s\ell(\gamma)} \left(\prod_{z \in I_{\star, \pm}(\gamma)} \chi(z)\right)^{\ell(\gamma) / \ell^\#(\gamma)},
 $$
 since for any closed geodesic $\gamma : \R/\Z \to \Sigma$ such that $i(\gamma, \gamma_\star) = n$ we have
 $$
\# \{z \in \partial,~z = (\gamma(\tau), \gamma'(\tau)) \text{ for some }\tau\} = n \ell^\#(\gamma)/ \ell(\gamma).
 $$
It remains to see that $\langle i_\Delta^*K_{\chi, \pm,n}(s), 1- \tilde \chi_L\rangle \to 0$ as $L \to +\infty$. Note that Lemma \ref{lem:estonell} gives
\begin{equation}\label{eq:estchil}
\left\|\dd^k\tilde \chi_L\right\| \leqslant C_k \e^{C_k L}.
\end{equation}
By Remark \ref{rem:scatcontinuous}, if $s_0 > 0$ is large enough, one has $\Sc_\pm(s_0) : \Omega^\bullet(\partial) \to C^0\left(\partial, \wedge^\bullet T^*\partial\right)$. Also for any $s \in \C$ with $\Re(s) > 0$ we have
\begin{equation}\label{eq:s_0+s}
\Sc_\pm(s_0 + s)w = (\Sc_\pm(s_0)w) \e^{-s\ell_\pm(\cdot)}, \quad w \in \Omega^\bullet(\partial).
\end{equation}
Let $N \in \N$ such that $\iota_\Delta^*K_{\chi, \pm,n}(s_0)$ extends as a continuous linear form on $C^N(\partial).$ Then Lemma \ref{lem:estonell} and (\ref{eq:estchil}) imply that if $\Re(s)$ is large enough, the product $\e^{-s\ell_{\pm, n}(\cdot)}\iota_\Delta^*K_{\chi, \pm,n}(s_0)$ is well defined and 
$$
\begin{aligned}
\left|\left\langle \e^{-s\ell_{\pm, n}(\cdot)}\iota_\Delta^*K_{\chi, \pm,n}(s_0), (1-\tilde \chi_L)\right\rangle\right| &= \left|\left\langle \iota_\Delta^*K_{\chi, \pm,n}(s_0), (1-\tilde \chi_L)\e^{-s\ell_{\pm,n}(\cdot)} \right\rangle\right| \\
&\leqslant C \left\|(1-\tilde \chi_L)\e^{-s\ell_{\pm,n}(\cdot)}\right\|_{C^N(\partial)} \\
&\leqslant C_N \e^{(C_N - \Re(s))L},
\end{aligned}
$$
since $\ell_{\pm,n} \geqslant L$ on $\supp(1-\tilde \chi_L)$. Therefore, to obtain that $\langle i_\Delta^*K_{\chi, \pm,n}(s_0 + s), 1- \tilde \chi_L\rangle \to 0$ as $L \to +\infty$, it suffices to show that 
$$
\e^{-s\ell_{\pm, n}(\cdot)}\iota_\Delta^*K_{\chi, \pm,n}(s_0) = \iota_\Delta^*K_{\chi, \pm,n}(s_0+s).
$$
This equality is a consequence of (\ref{eq:s_0+s}) and Lemma \ref{lem:elementary}, since we can take $s$ arbitrarily large to make $\exp(-s\ell_{\pm,n}(\cdot)) \in C^N(\partial)$ for any $N > 0$.
\end{proof}
 
As a consequence we have the

\begin{corr}
The function $s \mapsto \eta_{\pm, \chi, n}(s)$ defined for $\Re(s) \gg 1$ by the right hand side of (\ref{eq:strf}) extends to a meromorphic function on the whole complex plane.
\end{corr}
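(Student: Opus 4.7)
The plan is to identify $\eta_{\pm,\chi,n}(s)$ with the flat trace $\strf(\chi\tilde\Sc_\pm(s))^n$ via Proposition \ref{prop:computeflattrace}, and then argue that the right-hand side extends meromorphically by a three-step transfer: meromorphy of the resolvent $\Rightarrow$ meromorphy of the scattering operator $\Rightarrow$ meromorphy of the $n$-fold composition $\Rightarrow$ meromorphy of its flat trace.

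The first two steps are essentially in place. By Corollary \ref{cor:mero}, $s \mapsto \Sc_\pm(s)$ is a meromorphic family of operators $\Omega^\bullet(\partial)\to\mathcal D'^\bullet(\partial)$ with poles of finite rank located at Pollicott--Ruelle resonances. Near any $s_0\in\C$, we may decompose $\Sc_\pm(s) = Y_\pm(s) + \sum_j (s-s_0)^{-j} \Pi_{j,\pm}$ where $Y_\pm(s)$ is holomorphic near $s_0$ and each $\Pi_{j,\pm}$ is a finite-rank operator. The decisive point is that the wavefront bounds used in Lemma \ref{lem:composition} to make sense of the composition are inherited from the geometric bound on $\WF'(R_{\pm,\delta}(s))$ recalled in \S\ref{subsec:resolv} and thus hold uniformly in $s$, both for $Y_\pm(s)$ and for the Schwartz kernels of the $\Pi_{j,\pm}$. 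Consequently the same transversality argument that gave Lemma \ref{lem:composition} applies to each term in the expansion, and $s \mapsto (\chi\tilde\Sc_\pm(s))^n$ extends to a meromorphic family $\Omega^\bullet(\partial)\to\mathcal D'^\bullet(\partial)$ on all of $\C$.

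The last step, meromorphy of the flat trace, is where the main obstacle lies: the flat trace is not an operator trace, and one must check that the pullback by the diagonal inclusion $\iota_\Delta$ commutes with taking the meromorphic expansion in $s$. For this I would argue as follows. On each finite-rank piece appearing as a pole coefficient, the flat trace reduces to a standard trace of a finite-rank operator, which is just a number; these contribute genuine poles to the flat trace. On the holomorphic part, I would invoke that for operators whose Schwartz kernels form a holomorphic family of distributions with wavefront set contained in a fixed closed conic subset disjoint from $N^*\Delta$, the pullback $\iota_\Delta^*$ is continuous by H\"ormander's Theorem 8.2.4, and holomorphy passes to the pullback. Combining the two, $s \mapsto \strf(\chi\tilde\Sc_\pm(s))^n$ is meromorphic on $\C$.

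Finally, by uniqueness of meromorphic continuation, the extension so obtained must coincide with $\eta_{\pm,\chi,n}(s)$ in the half-plane $\Re(s)\gg 1$ where Proposition \ref{prop:computeflattrace} applies. Hence $\eta_{\pm,\chi,n}$ extends meromorphically to $\C$, with poles contained in the set of Pollicott--Ruelle resonances of $\Lie_X$. The hardest technical point, which merits being spelled out carefully (and which I expect to mirror the composition argument in Lemma \ref{lem:composition} closely), is verifying that the wavefront sets of the kernels of $(\chi\tilde\Sc_\pm(s))^n$ stay in a common closed conic region avoiding $N^*\Delta$ as $s$ varies in a compact set away from the poles, so that the diagonal pullback is jointly continuous in $s$ and distributional input.
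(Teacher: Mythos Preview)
Your proposal is correct and takes essentially the same approach as the paper. The paper treats the corollary as an immediate consequence of Proposition \ref{prop:computeflattrace}, which already asserts that $(\chi\tilde\Sc_\pm(s))^n$ has a well-defined flat trace for all $s$ in the meromorphic domain (the wavefront-set condition $\WF'\cap\Delta=\emptyset$ being checked uniformly in $s$ via the same arguments as Lemma \ref{lem:composition}); your write-up simply makes explicit the details (uniform wavefront bounds, finite-rank poles, continuity of $\iota_\Delta^*$) that the paper leaves implicit.
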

 
To prove Theorem \ref{thm:main} we now want to use a standard Tauberian argument near the first pole of $\eta_{\pm, \chi, n}$ to obtain the growth of $N(n,L)$. Indeed, it is known (see \S\ref{sec:tauberian}) that $s \mapsto R_{\pm, \delta}(s)$ has a pole at $s=h_\star$. However since $\eta_{\pm, \chi, n}$ is given by the trace of the restriction to $\partial$ of $R_{\pm, \delta}$, it is not clear a priori that $\eta_{\pm, \chi, n}$ will have the right behavior at $s = h_\star$. However in the next section we obtain some priori bounds on $N(n,L)$; this will imply that $\eta_{\pm, \chi, n}$ has indeed a pole at $s = h_\star$ of order $n$.

 \section{A priori bounds on the growth of geodesics with fixed intersection number with $\gamma_\star$}\label{sec:apriori}
 
The purpose of this section is to get \textit{a priori} bounds on $N(1,L)$ (and $N(2,L)$ in the case where $\gamma_\star$ is separating), using Parry-Pollicott's bound for Axiom A flows \cite{parry1983analogue}.

Choose some point $x_\star \in \gamma_\star$. Let $\grm$ the genus of $\Sigma$ and $(a_1,b_1, \dots, a_\grm, b_\grm)$ the natural basis of generators of $\Sigma$, so that the fundamental group of $\Sigma$ is the finitely presented group given by
$$\pi_1(\Sigma) = \langle a_1, b_1, \dots, a_\grm, b_\grm,~ [a_1,b_1]\cdots [a_\grm, b_\grm] = 1\rangle,$$
where we set $\pi_1(\Sigma) = \pi_1(\Sigma, x_\star).$


\subsection{The case $\gamma_\star$ is not separating}\label{subsec:apriorinotseparating}

\subsubsection{Lower bound}\label{subsubsec:lowernotseparating}

In this paragraph we will prove the

\begin{prop}\label{prop:lowerbound}
If $\gamma_\star$ is not separating, then there is $C > 0$ such that for $L$ big enough,
$$
C\e^{h_\star L}/L \leqslant N(1,L).
$$
\end{prop}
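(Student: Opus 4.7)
The strategy combines Parry--Pollicott's asymptotic for Axiom A flows with a concatenation construction that adds exactly one intersection with $\gamma_\star$. By Lemma \ref{lem:Sigma}, the extended surface $\Sigma_\delta$ has strictly convex boundary, so the geodesic flow restricted to the trapped set $K_\star$ is Axiom A with topological entropy $h_\star$. Its primitive periodic orbits are exactly the primitive closed geodesics of $\Sigma$ lying in $\Sigma_\star$, so by Parry--Pollicott \cite{parry1983analogue},
$$
\#\{\gamma_p \in \Pcal : i(\gamma_p, \gamma_\star) = 0,\ \ell(\gamma_p) \leqslant T\} \sim \frac{\e^{h_\star T}}{h_\star T}, \quad T \to +\infty.
$$

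Since $\gamma_\star$ is non-separating, $[\gamma_\star] \neq 0$ in $H_1(\Sigma, \Z)$, and we may pick a simple closed geodesic $\sigma_\star$ with $i(\sigma_\star, \gamma_\star) = 1$ by straightening any loop of algebraic intersection $\pm 1$ with $\gamma_\star$. Fix $x_\star \in \gamma_\star \cap \sigma_\star$ as basepoint and regard $\sigma_\star$ as a loop based at $x_\star$. To each primitive $\gamma_p$ with $i(\gamma_p, \gamma_\star) = 0$, associate the based loop $\sigma_\star \cdot (\eta \cdot \gamma_p \cdot \eta^{-1}) \in \pi_1(\Sigma, x_\star)$, where $\eta$ is a length-minimizing arc from $x_\star$ to a point of $\gamma_p$. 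This loop has length at most $\ell(\gamma_p) + C_0$ with $C_0 := \ell(\sigma_\star) + 2\diam(\Sigma, g)$, so the (unique) geodesic representative $\tilde\gamma_p$ of its free homotopy class satisfies $\ell(\tilde\gamma_p) \leqslant \ell(\gamma_p) + C_0$.

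For the intersection count I use two classical facts: (i) the based loop above has algebraic intersection $\pm 1$ with $\gamma_\star$ (as $\sigma_\star$ contributes $\pm 1$ and $\gamma_p$ contributes $0$), forcing $i(\tilde\gamma_p, \gamma_\star) \geqslant 1$ by parity; (ii) this representative has only one transverse crossing with $\gamma_\star$, so the minimum geometric intersection over its free homotopy class is $\leqslant 1$, and on a negatively curved surface this minimum is realized by the geodesic representative. Hence $i(\tilde\gamma_p, \gamma_\star) = 1$.

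The main obstacle is to show that the map $\gamma_p \mapsto \tilde\gamma_p$ has multiplicity bounded by some $M \in \N$ independent of $\gamma_p$. I would prove this by exploiting the Bass--Serre structure of $\pi_1(\Sigma)$ as an HNN extension of $\pi_1(\Sigma_\star)$ across the cyclic subgroup $\langle [\gamma_\star]\rangle$: distinct conjugacy classes in $\pi_1(\Sigma_\star)$ can only become conjugate in $\pi_1(\Sigma)$ after multiplication by $[\sigma_\star]$ in a boundedly controlled way, so at most finitely many (indeed, a universally bounded number of) $\gamma_p$'s can yield the same $\tilde\gamma_p$. Alternatively, a direct argument in the universal cover comparing axes of the corresponding hyperbolic isometries gives the same bound. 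Once bounded multiplicity is established,
$$
N(1, L) \geqslant M^{-1} \#\{\gamma_p \in \Pcal : i(\gamma_p, \gamma_\star) = 0,\ \ell(\gamma_p) \leqslant L - C_0\} \geqslant C \e^{h_\star L}/L
$$
for $L$ large, as claimed.
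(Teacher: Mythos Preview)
Your overall strategy---produce geodesics with one crossing by concatenating a fixed transversal $\sigma_\star$ with closed geodesics of $\Sigma_\star$---is close in spirit to the paper's, but the paper works with \emph{elements} of $\pi_1(\Sigma_\star)$ rather than conjugacy classes, and this difference is exactly where your argument develops a gap.

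The paper's route is: use the orbital count $\#\{w\in\pi_1(\Sigma_\star):\ell_\star(w)\le L\}\sim A\e^{h_\star L}$, then show (Lemma~\ref{lem:constructgeodesics}) that $[b_\grm w]=[b_\grm w']$ in $\pi_1(\Sigma)$ iff $w'=b_\grm^{-1}a_\grm^n b_\grm\, w\, a_\grm^{-n}$ for some $n\in\Z$, and finally use Milnor--\v{S}varc to bound the number of such $n$ with $\ell_\star\le L$ by $CL$. Dividing $\e^{h_\star L}$ by $CL$ yields the bound. The multiplicity is thus \emph{linear} in $L$, and this is explicit and easy.

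In your approach you count conjugacy classes ($\sim \e^{h_\star L}/L$ via Parry--Pollicott) and assert bounded multiplicity for $\gamma_p\mapsto\tilde\gamma_p$. But your map depends on the chosen representative $w_p=\eta\gamma_p\eta^{-1}$, and by the same conjugacy criterion the fibre over $\tilde\gamma$ is $\{\gamma_p:\ w_p\in\{d^n w_0 a_\grm^{-n}:n\in\Z\}\}$ with $d=b_\grm^{-1}a_\grm b_\grm\in\pi_1(\Sigma_\star)$. For varying $n$ the elements $d^n w_0 a_\grm^{-n}$ typically lie in \emph{distinct} $\pi_1(\Sigma_\star)$--conjugacy classes, so there is a genuine question of how many of them coincide with your specific short representatives $w_p$. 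Neither the Bass--Serre/HNN normal form nor a vague comparison of axes settles this: Collins' lemma gives you exactly the orbit description above, not a bound on how the orbit meets your set of chosen representatives. Making this precise would require Milnor--\v{S}varc--type control on word lengths of $d^n w_0 a_\grm^{-n}$ versus the minimal conjugate---essentially the same ingredients the paper already deploys, at which point it is simpler to count elements directly. As written, the multiplicity step is a real gap.

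Two smaller points: (i) for your crossing count you need $\eta$ to lie in $\Sigma_\star$ (touching $\gamma_\star$ only at $x_\star$); a length-minimizing arc in $\Sigma$ need not do this, so you should choose $\eta$ in $\Sigma_\star$ and bound its length by $\diam(\Sigma_\star,g)$ instead. (ii) Your input count is for primitive closed geodesics in $\Sigma_\star$, which includes $\gamma_\star$ itself and its boundary-parallel cousins; these should be excluded, though this is harmless asymptotically.
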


Note that the bound given in Theorem \ref{thm:main} is actually $N(1,L) \sim c_\star \e^{h_\star L}$. We could obtain a bound of this order with the methods presented in \S\ref{subsec:aprioriseparating} ; however the bounds given by Proposition \ref{prop:lowerbound} are sufficient for our purpose.

Up to applying a diffeomorphism to $\Sigma$, we may assume that $\gamma_\star$ is represented by $a_\grm \in \pi_1(\Sigma)$. In particular, $\Sigma_\star$ is a surface of genus $\grm - 1$ with $2$ punctures and the fundamental group $\pi_1(\Sigma_\star) = \pi_1(\Sigma_\star, x_\star')$ (here $x_\star'$ is some choice of point on $\partial \Sigma_\star$) is the free group given by $\langle a_1, b_1, \dots, a_\grm \rangle$.  Let $\widetilde \Sigma_\star$ denote the universal cover of $\Sigma_\star$ and let $\tilde x'_\star \in \widetilde \Sigma_\star$ such that $\pi(\tilde x'_\star) = x'_\star$ where $\pi : \widetilde \Sigma_\star \to \Sigma_\star$ is the natural projection. Then $\pi_1(\Sigma)$ acts on $\widetilde \Sigma_\star$ by deck transformations and we set
$$
\ell_\star(w) = \mathrm{dist}(\tilde x'_\star, w \tilde x'_\star), \quad w \in \pi_1(\Sigma_\star),
$$
where the distance comes from the metric $\pi^*g$ on $\widetilde \Sigma_\star.$ Note that if $\gamma_{[w]}$ denotes the unique geodesic in the free homotopy class of $w$ (which is represented by the conjugacy class $[w]$), we have $\ell(\gamma_{[w]}) \leqslant \ell_\star(w)$. We also denote 
$$
\wl(w) = \inf \left\{n \geqslant 0,~\alpha_{1} \dots \alpha_n = w,~\alpha_j \in \{a_k, a_k^{-1}, b_k, b_k^{-1},~k = 1, \dots, \grm-1\} \cup \{a_\grm, a_\grm^{-1}\}\right\}
$$
the word length of an element $w \in \pi_1(\Sigma_\star)$. It follows from the Milnor-\v{S}varc lemma \cite[Proposition 8.19]{bridson2013metric} that for some constant $D > 0$ we have
\begin{equation}\label{eq:milnor0}
\frac{1}{D} \wl(w) - D \leqslant \ell_\star(w) \leqslant D \wl(w) + D, \quad w \in \pi_1(\Sigma_\star).
\end{equation}
Also recall that we have the classical orbital counting (see e.g. \cite{roblin2003ergodicite})
\begin{equation}\label{eq:margulis}
\#\{w' \in \pi_1(\Sigma_\star, x_\star),~\ell_\star(w') \leqslant L \}\sim A \e^{h_\star L}, \quad L \to \infty
\end{equation}
for some $A >0$, where $h_\star$ is the topological entropy of the geodesic flow $(\Sigma_\star, g)$ restricted to the trapped set (see the introduction).

\begin{lemm}\label{lem:constructgeodesics}
Take $w,w' \in \pi_1(\Sigma_\star)$. Then $[b_\grm w] = [b_\grm w']$ (as conjugacy classes of $\pi_1(\Sigma)$) if and only if $w = b_\grm^{-1} a_\grm^n b_\grm w' a_\grm^{-n}$ in $\pi_1(\Sigma)$ for some $n \in \Z$.
\end{lemm}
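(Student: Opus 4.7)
My plan is to exploit the HNN extension structure of $\pi_1(\Sigma)$ induced by the cut along the non-separating curve $\gamma_\star$. First I would unpack the surface relation $[a_1,b_1]\cdots[a_\grm,b_\grm]=1$ (moving $R=[a_1,b_1]\cdots[a_{\grm-1},b_{\grm-1}]$ through the last commutator) to derive the identity
$$
b_\grm^{-1} a_\grm b_\grm = a_\grm R^{-1} \quad \text{in } \pi_1(\Sigma),
$$
which exhibits $\pi_1(\Sigma)$ as the HNN extension $\langle \pi_1(\Sigma_\star),\,b_\grm \mid b_\grm^{-1} a_\grm b_\grm = a_\grm R^{-1}\rangle$ with stable letter $b_\grm$ and associated edge subgroups $A=\langle a_\grm\rangle$, $B=\langle a_\grm R^{-1}\rangle$; these are the $\pi_1$'s of the two boundary circles of $\Sigma_\star$. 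In particular $b_\grm^{-1} a_\grm^n b_\grm = (a_\grm R^{-1})^n$ already lies in $\pi_1(\Sigma_\star)$, so the equation of the lemma is an honest equality inside the free group $\pi_1(\Sigma_\star)$.

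The $(\Leftarrow)$ direction is then a one-line computation: left-multiplying the formula by $b_\grm$ gives $b_\grm w = a_\grm^n (b_\grm w') a_\grm^{-n}$, displaying $b_\grm w$ as a conjugate of $b_\grm w'$ in $\pi_1(\Sigma)$.

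For the converse I would observe that $b_\grm w$ and $b_\grm w'$ are both cyclically reduced HNN-words of stable-letter length exactly $1$, and invoke Collins' lemma (cf.\ Lyndon--Schupp, IV.2.5): two conjugate cyclically reduced elements of positive stable-letter length differ by a cyclic rotation of their syllables followed by conjugation by an element of one associated edge subgroup. The cyclic rotation $b_\grm w\leftrightarrow wb_\grm$ gets absorbed into the conjugator, and the intertwining identity $a_\grm^n b_\grm = b_\grm (a_\grm R^{-1})^n$ (read off from the HNN relation) pins the effective conjugator to the subgroup $A=\langle a_\grm\rangle$; writing that conjugator as $a_\grm^n$ and rearranging produces exactly the claimed formula. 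If one prefers a self-contained argument without citing Collins, the same conclusion can be extracted from Britton's lemma directly: given an arbitrary conjugator $u\in\pi_1(\Sigma)$, write $u$ in HNN normal form and show inductively that every $b_\grm^{\pm 1}$-syllable of $u$ must cancel against those produced in $u(b_\grm w')u^{-1}$, eventually reducing $u$ to an element of $\pi_1(\Sigma_\star)$; the remaining condition $b_\grm^{-1} u b_\grm \in \pi_1(\Sigma_\star)$ then forces $u\in A$ by Britton.

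The main technical point I expect will require care is matching the correct edge subgroup ($A$ rather than $B$) to the side of the stable letter in the cyclic word $b_\grm w$; a quick sign check against the easy direction confirms the choice, and the rest is formal manipulation inside the HNN normal form.
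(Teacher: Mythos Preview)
Your approach is correct and genuinely different from the paper's. You recognize $\pi_1(\Sigma)$ as an HNN extension of the free group $\pi_1(\Sigma_\star)$ with stable letter $b_\grm$ and edge groups the two cyclic subgroups carried by the boundary circles of $\Sigma_\star$, and then invoke Collins' lemma (or Britton's lemma) to classify conjugates of the $t$-length-one word $b_\grm w$. This is a clean, standard combinatorial-group-theory argument; once the HNN structure is set up, the conclusion is essentially a citation. One small caveat: your displayed relation $b_\grm^{-1}a_\grm b_\grm=a_\grm R^{-1}$ is not quite what the surface relation gives (from $R[a_\grm,b_\grm]=1$ one gets $b_\grm a_\grm b_\grm^{-1}=Ra_\grm$, equivalently $b_\grm^{-1}(Ra_\grm)b_\grm=a_\grm$), but this only affects which of the two edge groups is called $A$ and which $B$, and your ``sign check'' remark already anticipates this.

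The paper, by contrast, gives a direct topological proof: starting from a free homotopy $H:[0,1]\times\R/\Z\to\Sigma$ between representatives of $b_\grm w$ and $b_\grm w'$ that each meet $\gamma_\star$ transversally in a single point, it makes $H$ transverse to $\gamma_\star$, so that $H^{-1}(\gamma_\star)$ is a 1-submanifold, extracts an embedded arc in $H^{-1}(\gamma_\star)$ joining the two intersection points on the boundary, and reparametrizes along this arc to produce a basepoint-sliding homotopy with basepoint confined to $\gamma_\star$; this yields $b_\grm w=a_\grm^{-n}b_\grm w'a_\grm^{n}$ directly. The paper's argument avoids any Bass--Serre/HNN machinery and, more to the point, the same transversality-and-arc-tracking technique is reused verbatim in the later Lemmas~\ref{lem:1} and~\ref{lem:2} for the separating case, so it fits the paper's internal economy. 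Your algebraic route is shorter here but would need to be redone (as an amalgamated-product version) for those later lemmas.
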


\begin{proof}
If $w = b_\grm^{-1} a_\grm^n b_\grm w' a_\grm^{-n}$, then clearly $b_\grm w$ and $b_\grm w'$ are conjugated in $\pi_1(\Sigma, x_\star).$ Reciprocally, assume that $[b_\grm w] = [b_\grm w']$, and take smooth paths $\gamma$ and $\gamma'$ representing $b_\grm w$ and $b_\grm w'$. Then there is a smooth homotopy  $H : [0,1] \times \R/\Z \to \Sigma$ such that $H(0, \cdot) = \gamma$ and $H(1, \cdot) = \gamma'$. We may assume that $H$ is transversal to $\gamma_\star$ so that 
$H^{-1}(\gamma_\star)$
is a smooth submanifold of $[0,1] \times \R/\Z$. It is clear that we may deform a little bit the paths $\gamma$ and $\gamma'$ (in $\pi_1(\Sigma, x_\star$)) so that $\gamma$ and $\gamma'$ intersect transversaly $\gamma_\star$ exactly once, so that $H^{-1}(\gamma_\star) \cap (\{j\} \times \R/\Z) = \{j\} \times \{[0]\}$ for $j = 0,1.$ Thus there is an embedding $F : [0,1] \to [0,1] \times \R/\Z$ such that $\mathrm{Im}(F) \subset H^{-1}(\gamma_\star)$ and $F(0) = (0,[0])$ and $F(1) = (1,[0])$. Write $F = (S,T)$. Then set 
$$
\tilde H(s,t) = H(S(s), [T(s) + t]), \quad (s,t) \in [0,1] \times [0,1].
$$
It is immediate to check that $\tilde H$ realizes an homotopy between $\gamma$ and $\gamma'$ with $\tilde H(s, 0) \in \gamma_\star$ for any $s \in [0,1]$. Thus, we obtain 
$
b_\grm w = a_\grm^{-n} b_\grm w' a_\grm^{n}
$
for some $n \in \Z.$
\end{proof}

\begin{proof}[Proof of Proposition \ref{prop:lowerbound}]
In what follows, $C$ is a constant that may change at each line. For any $w' \in \pi_1(\Sigma_\star)$ and $n \in \Z$, we have by (\ref{eq:milnor0})
$$
\begin{aligned}
\ell_\star(a_\grm^n b_\grm w' a_\grm^{-n}) &\geqslant \frac{1}{D} \wl(a_\grm^n b_\grm w' a_\grm^{-n}) - D
& \geqslant \frac{2|n|}{D} - \frac{\wl(w')}{D} - D.
\end{aligned} 
$$
In particular, for any $L$ and $w'$ such that $\ell_\star(w') \leqslant L$, we have
\begin{equation}\label{eq:estnumberclass}
\left| \left\{n \in \Z,~ \ell_\star(a_\grm^n b_\grm w' a_\grm^{-n}) \leqslant L \right\} \right| \leqslant CL + C.
\end{equation}
Now for $w \in \pi_1(\Sigma_\star)$ set $\Ccal_w = \{a_\grm^n b_\grm w a_\grm^{-n},~n \in \Z\} \subset \pi_1(\Sigma_\star)$ and denote by $\Cscr$ the set of such classes. For $\Ccal \in \Cscr$ we set $\ell_\star(\Ccal) = \inf_{w \in \Ccal} \ell_\star(w)$. Now by Lemma \ref{lem:constructgeodesics} we have an injective map
$$
\{\Ccal \in \Cscr,~\ell_\star(\Ccal) \leqslant L\} \to \{\gamma \in \Pcal_1,~\ell(\gamma) \leqslant L + C\}, \quad \Ccal_w \mapsto [b_\grm w].
$$
where $\Pcal_1$ denotes the set of primitive geodesics $\gamma$ such that $i(\gamma, \gamma_\star) = 1$. In particular we get with (\ref{eq:estnumberclass}) and (\ref{eq:margulis})
$$
\begin{aligned}
N(1,L) &\geqslant \sum_{\substack{\Ccal \\ \ell_\star(\Ccal)\leqslant L - C}} 1 \\
&\geqslant \frac{1}{CL + C} \sum_{\substack{\Ccal \\ \ell_\star(\Ccal)\leqslant L - C}}\left|\left\{w \in \Ccal,~\ell_\star(w) \leqslant L - C\right\}\right|\\
&=\frac{1}{CL + C} \left|\left\{w \in \pi_1(\Sigma_\star),~ \ell_\star(w) \leqslant L-C\right\}\right| \\
&\geqslant \frac{1}{CL+C} \exp(h_\star (L-C)),
\end{aligned}
$$
which concludes the proof.
\end{proof}

\subsubsection{Upper bound}\label{subsubsec:uppernotseparating}
Each $\gamma \in \Pcal_{1}$ with $\ell(\gamma) \leqslant L$ lies in the free homotopy class of $b_\grm^{\pm1} w'$ for some $w' \in \pi_1(\Sigma_\star, x_\star')$ and $\ell_\star(w) \leqslant L + C$. In particular (\ref{eq:margulis}) gives the bound
$$
N(1,L) \leqslant C \exp(h_\star L)
$$
for large $L$. Now let $\gamma \in \Pcal_{2}$ with $\ell(\gamma) \leqslant L$. Then $\gamma$ is in the conjugacy class of some concatenation $b_\grm^{\pm 1} w' b_\grm^{\pm 1} w''$, where $w', w'' \in \pi_1(\Sigma_\star)$ satisfy $\ell_\star(w') + \ell_\star(w'') \leqslant L + C$. Thus we get
$$
\begin{aligned}
N(2, L) &\leqslant C\sum_{\substack{w',w'' \in \pi_1(\Sigma_\star) \\ \ell_\star(w') + \ell_\star(w'') \leqslant L +C}} 1 \\
&\leqslant \sum_{k = 0}^{L+C} C \exp({h_\star k}) C \exp({h_\star (L+C-k)}) \\
&\leqslant C' L \exp(h_\star L). 
\end{aligned}
$$
Iterating this process we finally get, for large $L$,
$$
N(n, L) \leqslant C L^{n-1} \exp(h_\star L).
$$

\subsection{The case $\gamma_\star$ is separating}\label{subsec:aprioriseparating}

Assume now that $\gamma_\star$ is separating and write $\Sigma \setminus \gamma_\star = \Sigma_1 \sqcup \Sigma_2$ where the surfaces $\Sigma_j$ are connected. Up to applying a diffeomorphism to $\Sigma$, we may assume that $\gamma_\star$ represents the class
\begin{equation}\label{eq:star}
[a_1, b_1] \cdots [a_{\grm_1}, b_{\grm_1}] = [a_{\grm}, b_\grm]^{-1} \cdots [a_{\grm_1 + 1}, b_{\grm_1 + 1}]^{-1} \in \pi_1(\Sigma).
\end{equation}
Here $\grm_1$ is the genus of the surface $\Sigma_1$, and the genus $\grm_2$ of $\Sigma_2$ satisfies $\grm_1 + \grm_2 = \grm$. 

 We set $\pi_1(\Sigma) = \pi_1(\Sigma, x_\star)$ and $\pi_1(\Sigma_j) = \pi_1(\Sigma_j, x_\star)$ for $j=1,2$ (we see $\Sigma_j$ as a compact surface with boundary $\gamma_\star$ so that $x_\star$ lives on both surfaces). Then $\pi_1(\Sigma_1)$ (resp. $\pi_1(\Sigma_2)$) is the free group generated by $a_1, b_1, \dots, a_{\grm_1}, b_{\grm_1}$ (resp. $a_{\grm_1 + 1}, b_{\grm_1 + 1}, \dots, a_\grm, b_\grm$), and we denote by $w_{\star,1}$ and $w_{\star,2}$ the two natural words given by (\ref{eq:star}) representing $\gamma_\star$ in $\pi_1(\Sigma_1)$ and $\pi_1(\Sigma_2)$. 
Note that we have a well defined map
$$
\begin{matrix}
\pi_1(\Sigma_1) \times \pi_1(\Sigma_2) &\longrightarrow& \pi_1(\Sigma) \\
(w_1, w_2) & \longmapsto  & w_2 w_1
\end{matrix}
$$
given by the composition of two curves. 

For any $w \in \pi_1(\Sigma),$ we will denote by $[w]$ its conjugacy class, and $\gamma_w$ the unique geodesic of $\Sigma$ such that $\gamma_w$ is isotopic to any curve in $w$ (in fact we will often identify $[w]$ and $\gamma_w$). Let $(\widetilde \Sigma, \tilde g)$ be the universal cover of $(\Sigma, g)$, and choose $\tilde x_\star \in \widetilde \Sigma$ some lift of $x_\star$. Then $\pi_1(\Sigma)$ acts as deck transformations on $\widetilde \Sigma$ and we will denote
$$
\ell_\star(w) = \mathrm{dist}_{\widetilde \Sigma}(\tilde x_\star, w \tilde x_\star), \quad w \in \pi_1(\Sigma).
$$
As in the preceding section, we have the orbital counting (see e.g. \cite{roblin2003ergodicite})
$$
\#\{w_j \in \pi_1(\Sigma_j),~\ell_\star(w_j) \leqslant L\} \sim A_j \e^{h_j L}, \quad L \to \infty, \quad j = 1,2,
$$
for some $A_1, A_2 > 0.$
\subsubsection{Lower bound}
Unlike the case $\gamma_\star$ not separating, we will need a sharp lower bound. Namely, we prove here the following result.

\begin{prop}\label{prop:lowerbound2}
Assume that $\gamma_\star$ is separating, and that $h_1 = h_2 = h_\star$. Then there is $C > 0$ such that for $L$ large enough,
$$
N(2,L) \geqslant CL \e^{h_\star L}.
$$
\end{prop}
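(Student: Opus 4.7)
The strategy adapts that of Proposition \ref{prop:lowerbound} to the amalgamated free product structure
\[
\pi_1(\Sigma, x_\star) \;\cong\; \pi_1(\Sigma_1, x_\star) *_{\langle w_\star\rangle} \pi_1(\Sigma_2, x_\star).
\]
First, by the normal form and conjugacy theorems for amalgamated products, a closed geodesic $\gamma$ satisfies $i(\gamma, \gamma_\star) = 2$ iff $\gamma = \gamma_{[w_2 w_1]}$ for some pair $(w_1, w_2) \in (\pi_1(\Sigma_1) \setminus \langle w_{\star,1}\rangle) \times (\pi_1(\Sigma_2) \setminus \langle w_{\star,2}\rangle)$, and two such pairs $(w_1, w_2), (w_1', w_2')$ yield the same conjugacy class in $\pi_1(\Sigma)$ iff there exists $n \in \Z$ with $w_j' = w_\star^{-n} w_j w_\star^n$ in $\pi_1(\Sigma_j)$ for both $j = 1, 2$. (The alternative possibility coming from cyclic permutation of $w_2 w_1$ into $w_1 w_2$ is ruled out in our convention $w_j \in \pi_1(\Sigma_j) \setminus \langle w_\star\rangle$, since it would force one factor to lie in the edge subgroup.)

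Next, by Roblin's orbital counting \cite{roblin2003ergodicite} applied to each convex-cocompact surface $(\Sigma_j, g|_{\Sigma_j})$, together with the hypothesis $h_1 = h_2 = h_\star$,
\[
\#\{w_j \in \pi_1(\Sigma_j) : \ell_\star(w_j) \leq \ell\} \sim A_j \e^{h_\star \ell}, \quad j = 1, 2.
\]
Convolving these two asymptotics gives
\[
P_L := \#\{(w_1, w_2) : w_j \notin \langle w_{\star,j}\rangle,\; \ell_\star(w_1) + \ell_\star(w_2) \leq L\} \sim A_1 A_2 h_\star L \e^{h_\star L},
\]
and every such pair provides a closed geodesic $\gamma_{[w_2 w_1]}$ of length $\leq L$ with $i(\gamma_{[w_2 w_1]}, \gamma_\star) = 2$.

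To extract the lower bound $N(2,L) \gtrsim L \e^{h_\star L}$, I would control the fiber of the map $(w_1, w_2) \mapsto [w_2 w_1]$ by restricting to pairs where each $w_j$ is \emph{$w_\star$-cyclically reduced} in the free group $\pi_1(\Sigma_j)$, meaning the reduced word $w_j$ has neither $w_{\star, j}^{\pm 1}$ as an initial nor as a final segment. For such pairs we have $\wl(w_\star^{-n} w_j w_\star^n) = \wl(w_j) + 2|n| \wl(w_{\star, j})$ (no cancellation in the free group), so by Milnor--\v{S}varc (\ref{eq:milnor0}), $\ell_\star(w_\star^{-n} w_j w_\star^n) \to \infty$ linearly in $|n|$. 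Hence $(w_1, w_2)$ is the unique length-minimizing representative of its $\Z$-orbit, and distinct reduced pairs yield distinct conjugacy classes.

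The main obstacle is to verify that the $w_\star$-cyclically reduced subset of $\pi_1(\Sigma_j)$ still has exponential growth rate exactly $h_\star$, i.e.\ positive density in the full group. The ``forbidden'' elements beginning or ending with $w_{\star, j}^{\pm 1}$ lie in finitely many translates of the form $w_{\star, j}^{\pm 1} \pi_1(\Sigma_j)$ or $\pi_1(\Sigma_j) w_{\star, j}^{\pm 1}$; each such translate has exponential growth rate $h_\star$ but with a multiplicative constant strictly smaller than $A_j$ (comparable to $A_j \e^{-h_\star \ell_\star(w_\star)}$). A straightforward inclusion-exclusion on Roblin's asymptotics then preserves both the exponent $h_\star$ and the positivity of the leading coefficient, so the count of $w_\star$-cyclically reduced pairs with total length $\leq L$ is still $\sim C L \e^{h_\star L}$ for some $C > 0$, yielding $N(2, L) \geq C L \e^{h_\star L}$.
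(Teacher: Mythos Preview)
Your conjugacy criterion for pairs is incorrect, and this is a genuine gap. You claim that $(w_1,w_2)$ and $(w_1',w_2')$ yield the same conjugacy class in $\pi_1(\Sigma)$ iff $w_j' = w_\star^{-n} w_j w_\star^{n}$ for a \emph{single} $n\in\Z$. In fact (see Lemma~\ref{lem:2}), the correct statement involves two independent integers:
\[
[w_2 w_1]=[w_2'w_1'] \iff \exists\, p,q\in\Z:\quad w_2 = w_{\star,2}^{p}\,w_2'\,w_{\star,2}^{q},\quad w_1 = w_{\star,1}^{-q}\,w_1'\,w_{\star,1}^{-p}.
\]
The extra parameter comes from the non-uniqueness of the normal form in the amalgamated product: $w_2w_1=(w_2c)(c^{-1}w_1)$ for every $c\in\langle w_\star\rangle$, on top of conjugation by $\langle w_\star\rangle$. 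So the fibre of $(w_1,w_2)\mapsto[w_2w_1]$ is a $\Z^2$-orbit, not a $\Z$-orbit.

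This breaks your reduction step. Your ``$w_\star$-cyclically reduced'' condition (no initial/final segment $w_{\star,j}^{\pm1}$) is designed to single out a representative of a \emph{conjugation} orbit, and even there your equality $\wl(w_\star^{-n}w_jw_\star^{n})=\wl(w_j)+2|n|\wl(w_{\star,j})$ ignores partial cancellations. But more importantly, under the genuine $\Z^2$-action $w_j\mapsto w_\star^{a}w_jw_\star^{b}$ one can easily produce two distinct $w_\star$-cyclically reduced representatives of the same orbit (e.g.\ when $|a|=1$ and a few letters cancel without a full copy of $w_\star$ disappearing). So distinct reduced pairs need not give distinct conjugacy classes, and the injectivity you need fails.

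The paper handles precisely this difficulty: Lemma~\ref{lem:W} extracts, from the cyclic permutations of a cyclically reduced word $w_{\Ccal_j}$, a subset $W'_{\Ccal_j}$ of size $\geq (|W_{\Ccal_j}|-3)/4$ on which $w_\star^{p}w\,w_\star^{q}\in W'_{\Ccal_j}$ forces $p=q=0$, and combines this with the geometric Lemma~\ref{lem:0} to control the classes $\Ccal_j$. Your inclusion--exclusion sketch for the density of $w_\star$-cyclically reduced elements also conflates the word metric with $\ell_\star$; making it rigorous would require either Patterson--Sullivan type input or the combinatorial case analysis the paper carries out.
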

Let us briefly describe the strategy used to prove Proposition \ref{prop:lowerbound2}. We denote by $\Pcal(\Sigma_j)$ the set of primitive closed geodesics of $\Sigma_j$. Then we know that 
\begin{equation}\label{eq:boundopen}
N_j(L) \sim \frac{\e^{h_\star L}}{h_\star L}, \quad L \to +\infty, \quad j = 1,2,
\end{equation}
where $N_j(L) = \#\{\gamma \in \Pcal(\Sigma_j),~\ell(\gamma) \leqslant L\}$. In particular we have for any $L$ large enough
\begin{equation}\label{eq:firstbound}
\sum_{\substack{\gamma \in \Pcal(\Sigma_j) \\ \ell(\gamma) \leqslant L}} \ell(\gamma) \geqslant \frac{L}{4} \sum_{\substack{\gamma \in \Pcal(\Sigma_j) \\ L/4 < \ell(\gamma) \leqslant L}} 1 = \frac{L}{4}(N_j(L) - N_j(L/4)) \geqslant C \exp(h_\star L)
\end{equation}
for some constant $C>0$. Therefore we have for $L$ large enough
$$
\begin{aligned}
\sum_{\substack{(\gamma_1, \gamma_2) \in \Pcal_1 \times \Pcal_2 \\ \ell(\gamma_1) + \ell(\gamma_2) \leqslant L}} \ell(\gamma_1) \ell(\gamma_2) &\geqslant  \sum_{\substack{\gamma_1 \in \Pcal_1 \\ L/4 \leqslant \ell(\gamma_1) \leqslant L}} \ell(\gamma_1) \sum_{\substack{\gamma_2 \in \Pcal_2 \\ \ell(\gamma_2) \leqslant L - \ell(\gamma_1)}} \ell(\gamma_2) \\
& \geqslant  C \sum_{\substack{\gamma_1 \in \Pcal_1 \\ L/4 \leqslant \ell(\gamma_1) \leqslant L}} \ell(\gamma_1) \e^{h(L-\ell(\gamma_1))}\\ 
&  \geqslant  C\sum_{L/4 < k \leqslant L-1} [N_1(k+1) - N_1(k)] k \e^{h_\star (L-k-1)}.
\end{aligned}
$$
Now note that (\ref{eq:boundopen}) implies that $N(k+1) - N(k) \displaystyle{\geqslant C \frac{\e^{h_\star k}}{h_\star k}}$ for any $k$ large enough. Therefore we get for $L$ large enough (with some different constant $C$)
\begin{equation}\label{eq:firstest}
\begin{aligned}
\sum_{\substack{(\gamma_1, \gamma_2) \in \Pcal_1 \times \Pcal_2 \\ \ell(\gamma_1) + \ell(\gamma_2) \leqslant L}} \ell(\gamma_1) \ell(\gamma_2) &\geqslant CL\e^{h_\star L}.
\end{aligned}
\end{equation}
As a consequence, if given geodesics $\gamma_j \in \Pcal(\Sigma_j)$, we are able to construct about $\ell(\gamma_1) \ell(\gamma_2)$ new geodesics of $\Sigma$, intersecting $\gamma_\star$ exactly twice and of length bounded by $\ell(\gamma_1) + \ell(\gamma_2),$ then Proposition \ref{prop:lowerbound2} will follow. An idea would be to choose $w_j \in \pi_1(\Sigma_j)$ such that $[w_j]$ represents $\gamma_j$, and to consider the geodesics given by the conjugacy classes $[\bar w_2 \bar w_1]$ where $\bar w_j$ is a cyclic permutation of the word $w_j$ (there are about $\ell(\gamma_j)$ of those). However this process may not be injective (see Lemma \ref{lem:2}), and so more work is needed.

\begin{rem}
If $h_1 \neq h_2$, then adapting the proof presented below would show
$$
N(2, L) \geqslant C \e^{h_\star L}
$$
for large $L$, where $h_\star = \max(h_1, h_2)$.
\end{rem}

We start by the following lemma, which shows that the described procedure will give us indeed geodesics intersecting $\gamma_\star$ exactly twice, provided the geodesics $\gamma_j$ are not multiples of $\gamma_\star$.

\begin{lemm}\label{lem:1}
For two elements $w_j \in \pi_1(\Sigma_j),$ $j=1,2$, we have
$
i(\gamma_{w_2 w_1}, \gamma_\star) = 2
$
except if $w_j = w_{\star,j}^k$ in $\pi_1(\Sigma_j)$ for some $k \in \Z$ and $j \in \{1,2\}.$
\end{lemm}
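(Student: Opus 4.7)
The plan is as follows. I will first establish the a priori upper bound $i(\gamma_{w_2 w_1}, \gamma_\star) \leq 2$ by choosing loops $c_j$ based at $x_\star$ representing $w_j$ whose interiors lie in $\Sigma_j \setminus \gamma_\star$ (such representatives exist since every element of $\pi_1(\Sigma_j, x_\star)$ is represented by a loop of that form). The concatenation $c_2 \ast c_1$ represents $w_2 w_1$ and meets $\gamma_\star$ only at the basepoint $x_\star$, through which it passes twice; a small perturbation near each passage produces a representative transverse to $\gamma_\star$ with exactly two intersection points. Because geodesic representatives in negative curvature realize the geometric intersection number with a simple closed geodesic, this yields $i(\gamma_{w_2 w_1}, \gamma_\star) \leq 2$. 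Since $\gamma_\star$ is separating and hence null-homologous, the algebraic intersection number vanishes, so $i(\gamma_{w_2 w_1}, \gamma_\star)$ is even; combined with the upper bound this forces $i \in \{0, 2\}$.

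If $i = 0$, then $\gamma_{w_2 w_1}$ is disjoint from $\gamma_\star$ and therefore entirely contained in one of the two open subsurfaces of $\Sigma \setminus \gamma_\star$, say the one bounded by $\Sigma_j$. Joining $x_\star$ to a point of $\gamma_{w_2 w_1}$ by a path inside $\Sigma_j$ shows that the conjugacy class $[w_2 w_1]$ in $\pi_1(\Sigma)$ admits a representative in the image of $\pi_1(\Sigma_j) \hookrightarrow \pi_1(\Sigma)$.

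To reach a contradiction under the hypothesis of the lemma, I will invoke van Kampen's theorem to write $\pi_1(\Sigma) = \pi_1(\Sigma_1) \ast_{\langle w_\star\rangle} \pi_1(\Sigma_2)$ as an amalgamated free product, the amalgamated cyclic subgroup being identified with $\langle w_{\star, j}\rangle \subset \pi_1(\Sigma_j)$ for each $j$. Assuming $w_j \notin \langle w_{\star, j}\rangle$ for both $j$, the element $w_2 w_1$ is already cyclically reduced of alternating length $2$ in this amalgamated product; the conjugacy / normal form theorem for amalgamated products then asserts that every conjugate of $w_2 w_1$ has cyclically reduced form of the same length $2$ (obtained by cyclic permutation of the factors), and in particular cannot lie in $\pi_1(\Sigma_1)$ or $\pi_1(\Sigma_2)$, whose elements have cyclically reduced length at most $1$. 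This contradicts the preceding paragraph, forcing $w_j = w_{\star, j}^k$ for some $j \in \{1, 2\}$ and some $k \in \Z$. The main obstacle is this last step, which requires invoking the amalgamated-product normal form carefully; the topological ingredients (upper bound, parity, and the passage from $i = 0$ to containment in a subsurface) are routine once isolated.
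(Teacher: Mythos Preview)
Your argument is correct and takes a genuinely different route from the paper's proof. Both proofs begin the same way: exhibit a representative of $[w_2w_1]$ hitting $\gamma_\star$ transversally in two points, so $i(\gamma_{w_2w_1},\gamma_\star)\in\{0,2\}$. The divergence is in how the case $i=0$ is excluded. The paper argues purely topologically: assuming $i=0$, it takes a homotopy $H$ from the two-intersection representative to a curve disjoint from $\gamma_\star$, makes $H$ transverse to $\gamma_\star$, and observes that $H^{-1}(\gamma_\star)$ must then contain an embedded arc joining the two intersection times $\{(0,\tau_1),(0,\tau_2)\}$. Tracing this arc yields a homotopy (rel $\gamma_\star$) of one of the subarcs $\gamma|_{[\tau_1,\tau_2]}$ into $\gamma_\star$, hence $w_1=w_{\star,1}^k$ (or symmetrically for $w_2$), and injectivity of $\pi_1(\Sigma_j)\hookrightarrow\pi_1(\Sigma)$ finishes.

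Your approach replaces this transversality argument by combinatorial group theory: van Kampen gives $\pi_1(\Sigma)\cong\pi_1(\Sigma_1)\ast_{\langle w_\star\rangle}\pi_1(\Sigma_2)$, and if $w_j\notin\langle w_{\star,j}\rangle$ for both $j$ then $w_2w_1$ is cyclically reduced of length $2$, so by the conjugacy theorem for amalgamated products it cannot be conjugate into a factor --- contradicting the fact that $i=0$ forces $\gamma_{w_2w_1}\subset\Sigma_j$ for some $j$. This is cleaner and more conceptual, and it makes transparent why the exceptional set is exactly $\{w_j\in\langle w_{\star,j}\rangle\}$. The paper's proof, on the other hand, is self-contained (no appeal to normal-form theorems) and its homotopy-lifting technique is reused verbatim in the next lemma (Lemma~\ref{lem:2}), so it earns its keep there. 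One small point in your write-up: when $i=0$ you should also allow for the degenerate possibility $\gamma_{w_2w_1}=\gamma_\star$ (as sets), in which case the geodesic is not contained in an \emph{open} piece $\Sigma_j$; but then $[w_2w_1]=[w_\star^m]$ lies in a factor anyway, so your amalgam argument still applies without change.
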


\begin{proof}
Let $\gamma : \R/\Z \to \Sigma$ be a smooth curve in the free homotopy class of $w_2w_1$ such that $\{\tau \in \R/\Z,~\gamma(\tau) \in \gamma_\star\} = \{\tau_1, \tau_2\}$ for some $\tau_1 \neq \tau_2 \in \R/\Z.$ We may also choose $\gamma$ so that $\gamma|_{[\tau_1, \tau_2]}$ (resp. $\gamma|_{[\tau_2, \tau_1]}$) is homotopic to some representative $\gamma_1 : [0,1] \to \Sigma$ of $w_1$ (resp. some representative $\gamma_2 : [0,1] \to \Sigma$ of $w_2$) relatively to $\gamma_\star$, meaning that there is a homotopy between $\gamma|_{[\tau_1, \tau_2]}$ and $\gamma_1$ with endpoints (not necessarily fixed) in $\gamma_\star$. Here $[\tau_1, \tau_2]  \subset \R/\Z$ is the interval linking $\tau_1$ and $\tau_2$ in the counterclockwise direction. 

As $\gamma_{w_2w_1}$ minimizes the quantity $i(\gamma, \gamma_\star)$ for $\gamma \in [\gamma_{w_2w_1}]$, we have either $i(\gamma_{w_2w_1}) = 0$ or $i(\gamma_{w_2w_1}) = 2$. If $i(\gamma_{w_2w_1}, \gamma_\star) = 0$ then there exists a homotopy $H: [0,1] \times \R/\Z \to \Sigma$ such that $H(0, \cdot) = \gamma$ and $H(1, \tau) \notin \gamma_\star$ for any $\tau$. Moreover we may assume that $H$ is transversal to $\gamma_\star$, so that the preimage
$$
H^{-1}(\gamma_\star) \subset [0,1] \times \R/\Z
$$
is an embedded submanifold of $[0,1] \times \R/\Z$. As $H^{-1}(\gamma_\star) \cap \{s=0\} = \{\tau_1, \tau_2\}$ and $H^{-1}(\gamma_\star) \cap \{s=1\} = \emptyset$ it follows that there is an embedding $F : [0,1] \to [0,1] \times \R/\Z$ such that $F(0) = (0, \tau_1)$, $F(1) = (0, \tau_2)$ and
$$
F(t) \in H^{-1}(\gamma_\star), \quad t \in [0,1].
$$
As $F$ is an embedding, we have that $F$ is homotopic either to $J_{[\tau_1, \tau_2]}$ or to $J_{[\tau_2, \tau_1]}$, where $J_{[\tau, \tau']} : [0,1] \to [0,1] \times \R/\Z$ is the natural map that sends $[0,1]$ to ${0} \times [\tau, \tau']$. We may assume without loss of generality that $F \sim J_{[\tau_1, \tau_2]}$. Writing $F = (S, T)$ we have in particular that $T$ is homotopic to $I_{[\tau_1, \tau_2]} = p_2 \circ J_{[\tau_1, \tau_2]}$, where $p_2 : [0,1] \times \R/\Z \to \R/\Z$ is the projection over the second factor. This means that there is $G : [0,1] \times [0,1] \to \R/\Z$ such that for any $s,t \in [0,1]$,
$$
G(s,0) = \tau_1, \quad G(s,1) = \tau_2, \quad G(0,t) = \tau_1 + t(\tau_2 - \tau_1), \quad G(1,t) = T(t).
$$
Now we set 
$
\tilde H(s,t) = H(sS(t), G(s,t))
$
for $s,t \in [0,1].$ Then
$$
\tilde H(0,t) = \gamma(\tau_1 + t(\tau_2 - \tau_1)), \quad \tilde H(1,t) = (H\circ F)(t), \quad t \in [0,1],
$$
and 
$$
\tilde H(s,0) = H(0, \tau_1) = x_1, \quad \tilde H(s,1) = H(0, \tau_2) = x_2, \quad s \in [0,1].
$$
We conclude that $t \mapsto \gamma|_{[\tau_1, \tau_2]}(\tau_1 + t(\tau_2 - \tau_1))$, and thus $\gamma_1$, is homotopic (relatively to $\gamma_\star$) to some curve contained in $\gamma_\star$. Thus $w_1 = w_\star^k$ for some $k \in \Z$, in $\pi_1(\Sigma)$. As the inclusion $\pi_1(\Sigma_j) \to \pi_1(\Sigma)$ is injective (since $\grm_j > 0$ for $j = 1,2$), the lemma follows.
\end{proof}

Now we need to understand when the geodesics given by $[w_2 w_1]$ and $[w_2' w_1']$ are the same. This is the purpose of the following

\begin{lemm}\label{lem:2}
Take $w_j, w_j' \in \pi_1(\Sigma_j)$, $j=1,2$ such that $i(\gamma_{[w_2w_1]}, \gamma_\star) = 2$. Then $[w_2 w_1] = [w_2' w_1']$ as conjugacy classes of $\pi_1(\Sigma)$ if and only if there are $p,q \in \Z$ such that
\begin{equation}\label{eq:pq}
w_2 = w_{\star,2}^p w_2' w_{\star,2}^q, \quad w_1 = w_{\star,1}^{-q} w_1' w_{\star, 1}^{-p}.
\end{equation}
\end{lemm}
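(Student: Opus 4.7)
By Seifert--van Kampen, $\pi_1(\Sigma)$ is the amalgamated free product $\pi_1(\Sigma_1) *_{\langle w_\star\rangle} \pi_1(\Sigma_2)$, where the amalgamation identifies $w_{\star,1}\in\pi_1(\Sigma_1)$ with $w_{\star,2}\in\pi_1(\Sigma_2)$, and I will denote their common image by $w_\star$. The implication $(\Leftarrow)$ is a direct computation in $\pi_1(\Sigma)$: substituting \eqref{eq:pq} one gets
\[
w_2 w_1 = w_\star^p\, w_2'\, w_\star^q\, w_\star^{-q}\, w_1'\, w_\star^{-p} = w_\star^p (w_2' w_1') w_\star^{-p},
\]
which is conjugate to $w_2' w_1'$.

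For $(\Rightarrow)$, I first observe that the hypothesis $i(\gamma_{[w_2 w_1]}, \gamma_\star) = 2$ forces $w_j \notin \langle w_{\star,j}\rangle$ for $j=1,2$: indeed, if for instance $w_1 = w_{\star,1}^k$ then $w_2 w_1 = w_2 w_{\star,2}^k \in \pi_1(\Sigma_2)$, so $\gamma_{[w_2 w_1]}$ would be freely homotopic into $\Sigma_2$ and would not intersect $\gamma_\star$, contradicting Lemma \ref{lem:1} (or directly the hypothesis). Since $i(\cdot, \gamma_\star)$ depends only on the conjugacy class, the same holds for $w_1', w_2'$. Consequently $w_2 w_1$ and $w_2' w_1'$ are cyclically reduced words of syllable length $2$ in the amalgamated product, both with alternating pattern $(\pi_1(\Sigma_2), \pi_1(\Sigma_1))$.

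Now I invoke the classical conjugacy theorem for amalgamated free products (see e.g. Magnus--Karrass--Solitar): two cyclically reduced words of syllable length $\geqslant 2$ are conjugate if and only if one is obtained from the other by a cyclic permutation of syllables followed by conjugation by an element of the amalgamated subgroup $\langle w_\star\rangle$. For syllable length two this gives, for some $p\in\Z$,
\[
w_2' w_1' = w_\star^{-p}\,(w_2 w_1)\, w_\star^{p} \qquad\text{or}\qquad w_2' w_1' = w_\star^{-p}\,(w_1 w_2)\, w_\star^{p}.
\]
The second alternative would express the left-hand side, which lies in $\pi_1(\Sigma_2)\cdot \pi_1(\Sigma_1)$, as a reduced word in the pattern $\pi_1(\Sigma_1)\cdot \pi_1(\Sigma_2)$; by uniqueness of the normal form this is impossible. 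Hence $w_2' w_1' = (w_{\star,2}^{-p} w_2)(w_1 w_{\star,1}^{p})$. Finally, the only ambiguity in a two-factor decomposition $ab = a'b'$ with $a,a'\in\pi_1(\Sigma_2)$ and $b,b'\in\pi_1(\Sigma_1)$ is to replace $(a,b)$ by $(a w_\star^{-q}, w_\star^{q} b)$ for some $q\in\Z$; applying this yields
\[
w_2' = w_{\star,2}^{-p}\, w_2\, w_{\star,2}^{-q}, \qquad w_1' = w_{\star,1}^{q}\, w_1\, w_{\star,1}^{p},
\]
which rearranges to \eqref{eq:pq}. The main technical point will be correctly quoting the conjugacy and normal form theorems for amalgamated products; a fully geometric alternative would go through the Bass--Serre tree on which $\pi_1(\Sigma)$ acts (with edge stabilizers $\langle w_\star\rangle$), identifying the axis of $w_2 w_1$ with that of $w_2' w_1'$ and reading off $p,q$ from the induced translations.
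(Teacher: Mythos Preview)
Your proof is correct and takes a genuinely different, more algebraic route than the paper. The paper argues purely topologically: it represents $w_2w_1$ and $w_2'w_1'$ by explicit transverse loops $\gamma,\gamma'$ intersecting $\gamma_\star$ in two points each, takes a free homotopy $H:[0,1]\times\R/\Z\to\Sigma$ between them transverse to $\gamma_\star$, and studies the $1$-manifold $H^{-1}(\gamma_\star)$. By tracking arcs of $H^{-1}(\gamma_\star)$ from $\{0\}\times\{\tau_j\}$ to $\{1\}\times\{\tau_j'\}$ (and ruling out the wrong pairing via a sign-of-$\rho$ argument), the paper produces an explicit homotopy rel $\gamma_\star$ between the $\Sigma_j$-arcs of $\gamma$ and of $\gamma'$, from which the relations \eqref{eq:pq} are read off with the integers $p,q$ arising from the $\gamma_\star$-winding of the chosen connecting paths.

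Your argument instead passes through the van Kampen decomposition $\pi_1(\Sigma)\cong\pi_1(\Sigma_1)*_{\langle w_\star\rangle}\pi_1(\Sigma_2)$ and invokes the Magnus--Karrass--Solitar conjugacy theorem for amalgamated products, together with uniqueness of normal forms. This is cleaner and shorter: the hypothesis $i(\gamma_{[w_2w_1]},\gamma_\star)=2$ (via Lemma~\ref{lem:1}) guarantees that all four of $w_1,w_2,w_1',w_2'$ lie outside $\langle w_\star\rangle$, so both words are cyclically reduced of syllable length~$2$; the conjugacy theorem then gives exactly the two alternatives you list, the syllable-pattern argument kills the second, and the normal form ambiguity yields \eqref{eq:pq}. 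What you gain is brevity and a direct appeal to classical combinatorial group theory; what the paper's approach gains is self-containment (no black-box theorem on amalgams) and a template that the author later reuses for the multi-curve case in \S\ref{sec:multi}, where the analogous statement is quoted without proof. Either approach would serve equally well in that extension (yours via graphs of groups or Bass--Serre theory, as you note).
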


\begin{proof}
Again, let $\gamma : \R/\Z \to \Sigma$ be a smooth curve intersecting transversely $\gamma_\star$ such that $\{\tau \in \R/\Z,~\gamma(\tau) \in \gamma_\star\} = \{\tau_1, \tau_2\}$ for some $\tau_1 \neq \tau_2 \in \R/\Z$, such that $\gamma([\tau_1, \tau_2]) \subset \Sigma_1$ and $\gamma([\tau_2, \tau_1]) \subset \Sigma_2$. Let $x_j = \gamma(\tau_j)$ for $j=1,2$ and chose arbitrary paths $c_j$ contained in $\gamma_\star$ linking $x_j$ to $x_\star$. All the preceding choices can be made so that the curve $\gamma_1 = c_2 \gamma|_{[\tau_1, \tau_2]} c_1^{-1}$ (resp. $\gamma_2 = c_1 \gamma|_{[\tau_2, \tau_1]} c_2^{-1}$) represents $\gamma_*^pw_1\gamma_*^q$ (resp. $\gamma_*^{-q}w_2\gamma_*^{-p}$) for some $p,q \in \Z.$ We may proceed in the same way to obtain $\gamma', \tau_1', \tau_2', c_1', c_2', p',q'$ so that the same properties hold with $w_1, w_2$ replaced by $w_1',w_2'.$ By hypothesis, we have that $\gamma$ is freely homotopic to $\gamma'$. Thus we may find a smooth map $H : [0,1] \times \R/\Z \to \Sigma$ such that $H(0, \cdot) = \gamma$ and $H(1, \cdot) = \gamma'$. As in Lemma \ref{lem:1}, $H$ may be chosen to be transversal to $\gamma_\star$, so that 
$$
H^{-1}(\gamma_\star) \subset [0,1] \times \R/\Z
$$
is a finite union of smooth embedded submanifolds of $[0,1] \times \R/\Z$. Let $(x, \rho) : \Sigma \to \R/\Z \times (-\varepsilon, \varepsilon)$ be coordinates near $\gamma_\star$ such that $\{\rho = 0\} = \gamma_\star$ and $|\rho| = \mathrm{dist}(\gamma_\star, \cdot)$ and such that $\{(-1)^{j-1}\rho \geqslant 0\} \subset \Sigma_j$. As $H^{-1}(\gamma_\star) \cap \{s=0\} = \{\tau_1, \tau_2\}$ and $H^{-1}(\gamma_\star) \cap \{s=1\} = \{\tau_1', \tau_2'\}$, we have two smooth embeddings $F_1, F_2 : [0,1] \to [0,1] \times \R/\Z$ such that $F_j([0,1]) \subset H^{-1}(\gamma_\star)$ and
$
F_j(0) = (0, \tau_j)
$
for $j=1,2$, with (indeed we have $i(\gamma, \gamma_\star) = 2$ and thus there is a path in $H^{-1}(\gamma_\star)$ linking $\{s=0\}$ to $\{s=1\}$, since otherwise we could proceed as in the proof of Lemma \ref{lem:1} to obtain that $i(\gamma, \gamma_\star) = 0$). In fact we have $F_1(1) = (1, \tau_1')$ and $F_2(1) = (1, \tau_2')$ (we shall prove it later). Set $F_j = (S_j, T_j)$. Set, with the same notations as in the proof of Lemma \ref{lem:2},
$$
\tilde H(s, t) = H\bigl((1-t)S_1(s) + tS_2(s), ~T_1(s) + t(T_2(s) - T_1(s))\bigr), \quad s,t \in [0,1].
$$
Then $H$ is smooth as $T_1(s) \neq T_2(s)$ for any $s$ (as $H^{-1}(\R/\Z$ is smooth), and thus
$$
\tilde H(0, t) = \gamma(\tau_1 + t(\tau_2 - \tau_1)), \quad \tilde H(1, t) = \gamma(\tau_1' + t(\tau_2'- \tau_1')),
$$
and 
$$
\tilde H(s, 0) = H(S_1(s), T_1(s)), \quad \tilde H(s,1) = H(S_2(s), T_2(s)), \quad s \in [0,1].
$$
For $j=1,2$ let $c_j(s),~ s\in[0,1],$ be paths contained in $\gamma_\star$ depending continuously on $s$ and linking $T_j(s)$ to $x_\star$, such that $c_j(0) = c_j$. Then the construction of $\tilde H$ shows that
$$
c_2(0)\gamma|_{[\tau_1, \tau_2]}c_1(0)^{-1} \sim c_2(1) \gamma|_{[\tau_1', \tau_2']}c_1(1)^{-1}, 
$$
and reversing the role of $\tau_1$ and $\tau_2$ in the constructions made above,
$$
c_1(0)\gamma|_{[\tau_2, \tau_1]}c_2(0)^{-1} \sim c_1(1) \gamma|_{[\tau_2', \tau_1']}c_2(1)^{-1}.
$$
Thus we obtain 
$$\gamma_\star^{p}w_1\gamma_\star^q = c_2(1)c_2'^{-1}\gamma_*^{p'}w_1' \gamma_{*}^{q'}c_1' c_1(1)^{-1}, \quad \gamma_\star^{-q}w_2\gamma_\star^{-p} = c_1(1)c_1'^{-1} \gamma_*^{-q'}w_2\gamma_\star^{-p'}c_2'c_2(1)^{-1},$$ which is the conclusion of Lemma \ref{lem:2} as the paths $c_1(1)c_1'^{-1}$ and $c_2(1)c_2'^{-1}$ are contained in $\gamma_\star$ (and again, the inclusions $\pi_1(\Sigma_j) \to \pi_1(\Sigma)$, $j = 1,2,$ are injective).

Thus it remains to show that $F_j(1) = (1, \tau_j')$ for $j=1,2$. We extend $\rho$ into a smooth function $\rho : \Sigma \to \R$ such that $(-1)^{j-1}\rho > 0$ on $\Sigma_j \setminus \gamma_\star$. Now there exists a continuous path $G : [0,1] \to ([0,1] \times \R/\Z) \setminus H^{-1}(\gamma_\star)$ such that $G(0) \in \{0\}\times ]\tau_1, \tau_2[$ and $G(1) \in \{1\}\times (\R/\Z \setminus \{\tau_1', \tau_2'\})$ (otherwise it would mean that there is a continuous path in $[0,1] \times \R/\Z$ linking $(0, \tau_1)$ to $(0, \tau_2)$, which would imply, as in Lemma \ref{lem:1}, that $i(\gamma, \gamma_\star) = 0$). In particular we have $\rho \circ H \circ G > 0$ since $\rho(H(0, \tau)) > 0$ for $\tau \in ]\tau_1, \tau_2[.$ Thus necessarily $G(1) \in \{1\}\times]\tau_1', \tau_2'[$ since $\rho(H(1, \tau)) < 0$ for $\tau \in ]\tau_2', \tau_1'[.$  Now, as $\mathrm{Im}(F_1) \cap \mathrm{Im}(F_2) = \emptyset$ (again, if the intersection was not empty we could find a path linking $(0, \tau_1)$ to $(0, \tau_2)$), we have that $G(1)$ lies in $]T_1(1), T_2(1)[.$ Since $(\rho \circ H \circ H)(1) > 0$, it follows that $T_1(1) = \tau_1'$ and $T_2(1) = \tau_2'.$ The lemma is proven.
\end{proof}

Before starting the proof of Proposition \ref{prop:lowerbound2}, we state a technical result that will be useful to show that there are not too many elements $w_j, w_j' \in \pi_1(\Sigma_j)$ such that $[w_2 w_1] = [w_2' w_1']$. For any element $w_j \in \pi_1(\Sigma_j),$ we denote by $\ell([w_j])$ its translation length, that is
$$
\ell([w]) = \inf_{\tilde x \in \tilde \Sigma} \dd_{\tilde \Sigma}(\tilde x, w \tilde w).
$$
Of course this length coincide with the length of $\gamma_{w}$. 

\begin{lemm}\label{lem:0}
There exists $C > 0$ such that the following holds. For any ${w} \in \pi_1(\Sigma_j)$, there exists $n_{w} \in \Z$ such that
$$
\ell([w_{\star,j}^{n + n_w} w]) \geqslant \ell([w_{\star,j}^{n_w}w]) + \ell(\gamma_\star)|n|- C, \quad n \in \Z.
$$
\end{lemm}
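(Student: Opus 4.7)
The plan is to work in the universal cover $(\widetilde \Sigma, \tilde g)$, on which $\phi := w_{\star, j}$ acts as a hyperbolic isometry with axis $A$ (a lift of $\gamma_\star$ passing through $\tilde x_\star$) and translation length $T := \ell(\gamma_\star)$. Set $g_n := \phi^n w$; since $\ell([g_n]) \to +\infty$ as $|n| \to \infty$ (by negative curvature of $\Sigma$), the function $n \mapsto \ell([g_n])$ attains a minimum at some integer $n_w \in \Z$, which is my choice for the $n_w$ of the lemma.

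The strategy is clearest in the constant-curvature case, which I would treat first. Uniformizing $\pi_1(\Sigma) \subset \PSL(2, \R)$ with $\phi = \operatorname{diag}(e^{T/2}, e^{-T/2})$ and $w = \bigl(\begin{smallmatrix} a & b \\ c & d \end{smallmatrix}\bigr)$, the trace formula yields
\[
\cosh\bigl(\ell([g_n])/2\bigr) = \sqrt{ad}\,\cosh\bigl((n - n^*)T/2\bigr), \qquad n^* := T^{-1}\log(d/a),
\]
assuming WLOG $a, d > 0$, with $\sqrt{ad} > 1$ since $g_{n_w}$ is hyperbolic. Then $n_w$ is the nearest integer to $n^*$, so $|n_w - n^*| \leqslant 1/2$. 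Using the elementary inequalities $\log x \leqslant \cosh^{-1}(x) \leqslant \log(2x)$ for $x \geqslant 1$ and $|\mu| - \log 2 \leqslant \log \cosh \mu \leqslant |\mu|$, subtracting the two values of $\ell$ gives
\[
\ell([g_{n+n_w}]) - \ell([g_{n_w}]) \geqslant T|n| - T - 4\log 2,
\]
which is the lemma with the explicit universal constant $C = T + 4\log 2$.

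For variable negative curvature, I would transfer the above computation using the Busemann functions $\beta^\pm$ centered at the endpoints $\xi^\pm$ of $A$, normalized by $\beta^\pm(\tilde x_\star) = 0$; these play the role of $\log|a|, \log|d|$ from the trace formula. The equivariance $\beta^\pm \circ \phi = \beta^\pm \mp T$ implies $\beta^\pm(g_n \tilde x_\star) = \beta^\pm(w \tilde x_\star) \mp n T$. Combined with the $\mathrm{CAT}(-1)$ rectangle identity $\cosh(d(x, gx)/2) \geqslant \cosh(d(x, A_g)) \cosh(\ell([g])/2)$ for hyperbolic $g$ (valid after rescaling, since the curvature of $(\Sigma, g)$ is bounded above by a negative constant) and the comparison asymptotics $\beta^+(z) + \beta^-(z) = 2\log \cosh d(z, A) + O(1)$ uniformly in $z$, the structure of the hyperbolic computation carries over, producing a bound of the same form with an additional universal error.

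The main obstacle is preserving uniformity in $w$ despite $d_0 := d(w \tilde x_\star, A)$ being potentially large. Both the upper bound on $\ell([g_{n_w}])$ and the lower bound on $\ell([g_{n + n_w}])$ contain matching $\log \cosh d_0$ contributions, which cancel in the difference. In the constant-curvature case this cancellation is algebraic, built into the trace formula, while in the $\mathrm{CAT}(-1)$ case it reflects the symmetric appearance of $d(z, A)$ on both sides of the rectangle identity. Carefully verifying this cancellation is what makes the resulting constant $C$ depend only on $(\Sigma, g)$ and $\gamma_\star$ and not on $w$.
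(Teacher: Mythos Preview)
Your approach is genuinely different from the paper's, and the constant-curvature computation is essentially correct---but the ``WLOG $a,d>0$'' is not justified. If the axis $A_w$ lies on one side of $A$ (which is the geometric input here), one can still have $ad<0$: take $\phi$ diagonal and $w$ conjugate to $\mathrm{diag}(\mu,\mu^{-1})$ by $\bigl(\begin{smallmatrix}1&-2\\1&-1\end{smallmatrix}\bigr)$, which has both fixed points positive yet $a=-\mu+2/\mu<0<d$ for $\mu>\sqrt2$. In that case $\tfrac12|\operatorname{tr}g_n|=\sqrt{-ad}\,\lvert\sinh((n-n^\ast)T/2)\rvert$, and the same logarithmic estimates go through with the same shape of constant; so this is a repairable gap, but you should treat both sign cases.

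The serious gap is in the variable-curvature transfer. Your rectangle inequality is stated in the wrong direction: in $\mathbb H^2$ the Saccheri identity is $\sinh(d(x,gx)/2)=\cosh(d(x,A_g))\sinh(\ell([g])/2)$, which yields $\cosh(d(x,gx)/2)\le\cosh(d(x,A_g))\cosh(\ell([g])/2)$, not $\ge$. Either way, extracting a \emph{lower} bound on $\ell([g_n])$ from the displacement $d(\tilde x_\star,g_n\tilde x_\star)$ forces you to control $d(\tilde x_\star,A_{g_n})$ uniformly in $n$ and $w$, and nothing in your Busemann setup does that: the quantities $\beta^\pm(g_n\tilde x_\star)$ are relative to the fixed axis $A$ of $\phi$, not to the moving axis $A_{g_n}$, so the ``matching $\log\cosh d_0$ contributions'' you hope will cancel are not the same object on the two sides. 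In constant curvature this problem disappears because the trace formula computes $\ell([g_n])$ directly without ever seeing $A_{g_n}$; in variable curvature you have no such shortcut.

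The paper's proof sidesteps exactly this difficulty. Instead of estimating $d(\tilde x_\star,A_{g_n})$, it chooses a basepoint $z$ on $A_{w_\star}$ realizing the distance to $A_w$, uses the cyclic ordering of the endpoints $(w_{\star,-},w_-,w_+,w_{\star,+})$ to get a \emph{uniform} positive lower bound on the angle at $z$ between $[g_n^{-1}z,z]$ and $[z,g_nz]$, and then invokes the CAT$(-\kappa)$ fact that a uniform angle bound gives $\ell([g_n])\ge d(z,g_nz)-C$ with $C$ depending only on the angle. This angle argument is the missing idea in your outline; it is what replaces the control of $d(\tilde x_\star,A_{g_n})$ and makes the constant $C$ independent of $w$.
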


\begin{proof}
The method presented here was inspired by Fr\'ed\'eric Paulin. We fix $j \in \{1,2\}$ and denote $w_\star = w_{\star, j}$. First note that if $w = w_\star^k$ for some $k \in \Z$ then the conclusion is clear with $n_w = -k$ and $C = 0$. Next assume that $w \neq w_\star^k$ for any $k$. In particular $w$ is not the trivial element and is thus hyperbolic. Let $(\tilde \Sigma, \tilde g)$ denote the universal cover of $(\Sigma, g)$ ; then $\pi_1(\Sigma)$ acts as deck transformations on $(\tilde \Sigma, \tilde g)$. For any $w \in \pi_1(\Sigma) \setminus \{1\},$ we denote by (here $z$ denotes any point in $\tilde \Sigma$)
$$
w_\pm = \lim_{k \to +\infty} (w^{\pm 1})^k(z)
$$
the two distinct fixed points of $w$ in the boundary at infinity $\partial_\infty \tilde \Sigma$ of $\tilde \Sigma.$ We also denote by $A_{w}$ the translation axis of $w$, that is, the unique complete geodesic of $(\tilde \Sigma, \tilde g)$ converging towards $w_+$ (resp. $w_-$) in the future (resp. in the past).
As $i(w, w_\star) = 0$ (since $w$ is not a power of $w_\star$ which represents the boundary of $\Sigma_j$), we have $A_w \cap A_{w_\star} = \emptyset.$ Moreover, $w_\pm \notin \{w_{\star, -}, w_{\star, +}\}$ (indeed if it were the case, then $w$ would be equal to some power of $w_\star$, as $\pi_1(\Sigma)$ acts properly and discontinuously on $\tilde \Sigma$). 

In a first step, we will assume that the family $(w_{\star, -}, w_-, w_+, w_{\star,+})$ is cyclically ordered in $\partial_\infty \tilde \Sigma \simeq S^1$, and we denote by $w \uparrow w_\star$ this property. Consider $z \in A_{w_\star}$ and $z' \in A_w$ such that $\dist(A_{w_\star}, A_w) = \dist(z, z')$. For any $x \neq y \in \tilde \Sigma$, we denote by $[x,y]$ the unique geodesic segment joining $x$ to $y$, by $(x,y)$ the unique complete oriented geodesic ray passing through $x$ and $y$ and by $(x,y)_\pm$ the future and past endpoints of $(x,y)$. Then we claim that the following holds (see Figure \ref{fig:geometry}):
\begin{enumerate}
\item \label{length} For any $n \geq 1$ we have $\dist(z, w_\star^n w z) \geqslant n\ell([w_\star]) + \ell([w])$;
\item \label{angle} There is $c > 0$, independent of $w$ satisfying $w \uparrow w_\star$, such that for any $n \geqslant 1$, the angle (taken in $[0, \pi]$) between the segments $[w^{-1} w_\star^{-n} z, z]$ and $[z, w_\star^n w z]$ is greater than $c$ (denoted $\alpha$ on Figure)
\end{enumerate}

\begin{figure}
\includegraphics[scale=1.15]{cat}
\caption{Proof of Lemma \ref{lem:0}.}
\label{fig:geometry}
\medskip
\small{}
\end{figure}
To see that (\ref{length}) holds, first note that the segment $[z, w_\star^n w z]$ intersects $[w_\star^n z, w_\star^n z']$, because $w \uparrow w_\star$, and denote by $z''$ the intersection point. Then, as $[w_\star^n z, w_\star^n z']$ is orthogonal to $A_{w_\star}$, we have $\ell([z, z'']) \geqslant n \ell([w_\star]).$ Moreover, as both $[w_\star^n z, w_\star^n z']$ and $[w_\star^n w z', w_\star^n w z]$ are orthogonal to $w_\star^n A_{w}$, we have $\dist(z'', w_\star^n w z) \geqslant \ell([w])$.

We prove (\ref{angle}) as follows. We have a decomposition in connected sets
$$\tilde \Sigma \setminus \bigl( (z,z') \cup (w_\star z, w_\star z') \bigr)  = F_- \cup F_0 \cup F_+,$$
where $w_{\star, \pm} \in \overline{F_\pm}.$ Then since $w \uparrow w_\star$, we have $w_\star^n w z \in F_+$ for any $n \geqslant 1$ (since $w z \in F_0 \cup F_+$) and thus the angle $\alpha_w$ between $[z', z]$ and $[z, w_\star^n w z]$ is greater than the angle $\alpha_z$ between $(z, z')$ and the ray joining $z$ to $(w_\star z, w_\star z')_+$. Now $\alpha_z$ only depends on $z$ (and not on $w$), and we set $c = \inf_{y \in A_{w_\star}} \alpha_y > 0$ (indeed $y \mapsto \alpha_y$ is continuous and $\alpha_y = \alpha_{w_\star y}$ for any $y \in A_{w_\star}$). As $w^{-1} w_\star^{-n} z \in F_-$, we get (\ref{angle}).

Now it is a classical fact from the theory of CAT($-\kappa$) spaces ($\kappa > 0$)  that the following holds. For $c > 0$ as above, there is $C > 0$ such that if $\eta \in \pi_1(\Sigma) \setminus \{1\}$ and $z \in \tilde \Sigma$ satisfy that the angle (taken in $[0, \pi]$) between $[\eta^{-1} z, z]$ and $[z, \eta z]$ is greater or equal than $c$, then $\ell(\eta) \geqslant \dist(z, \eta z) - C$ (see for example \cite[Lemma 2.8]{paulin2012equilibrium}). Applying this to $\eta = w_\star^n w$, we get with (\ref{length}) and (\ref{angle})
\begin{equation}\label{eq:estdirection}
\ell([w_\star^n w]) \geqslant \dist(z, w_\star^n w z) - C \geqslant n \ell([w_\star]) + \ell([w]) - C, \quad n \geqslant 1.
\end{equation}
Here $C$ does not depend on $w$ such that $w \uparrow w_\star$. Now note that for $n > 0$ one has\footnote{\label{foot:2} Indeed, we have $(w_\star^nw)_+ = \lim_{k \to +\infty} (w_\star^n w)^k \cdot w_+ \subset [w_+, w_{\star,+}]$ (the interval joining $w_+$ to $w_{\star, +}$ in $\partial_\infty \tilde \Sigma$ but not containing $w_-$ nor $w_{\star, -}$) as $n > 0$. Similarly $(w_\star^nw)_- \subset [w_{\star, -}, w_-]$ and thus $w_\star w^n \uparrow w_\star.$}
\begin{equation}\label{eq:uparrow}
w \uparrow w_\star \quad \implies \quad  w_\star^n w \uparrow w_\star.
\end{equation}
Moreover, for $n > 0$ large enough (depending on $w$), we have\footnote{This is a consequence of footnote \ref{foot:2}, which implies that if $w \uparrow w_\star^{-1}$ and $w_\star^n w \uparrow w_\star^{-1}$ then $[(w_\star^n w)_-, (w_\star^n w)_+] \subset [w_-, w_+].$ By looking at the action of $w_\star$ on $\partial_\infty \tilde \Sigma$, we see that it is not possible if $n$ is large enough. Similarly, we have $w_\star^{-n}w \uparrow w_\star^{-1}$ for $n$ large enough.}
$$
w_\star^{\pm n} w \uparrow w_\star^{\pm 1}.
$$
Therefore, if $n_w = \inf\{n \in \Z,~w_\star^n w \uparrow w_\star\}$ we have, for any $n \geqslant 0$,
$$
w_\star^n w_\star^{n_w} w \uparrow w_\star \quad \text{ and } \quad w_\star^{-n} w_\star^{n_w - 1} w \uparrow w_\star^{-1}.
$$
Applying (\ref{eq:estdirection}) with $w$ replaced by $w_\star^{n_w} w$ we get 
$$
\ell([w_\star^n w_\star^{n_w} w]) \geqslant \ell([w_\star^{n_w} w]) + n\ell([w_\star]) - C, \quad n \geqslant 1.
$$
Now applying (\ref{eq:estdirection}) with $w_\star$ replaced by $w_\star^{-1}$ and $w$ replaced by $w_{\star}^{n_w - 1} w$, we get 
$$
\ell([(w_\star^{-1})^nw_\star^{n_w - 1} w]) \geqslant \ell([w_\star^{n_w - 1} w]) + n \ell([w_\star]) - C', \quad n \geqslant 1.
$$
The Lemma easily follows from the last two estimates, up to changing $C$ and $C'$ and replacing $n_w$ by $n_w - 1$.
\end{proof}

\begin{proof}[Proof of Proposition \ref{prop:lowerbound2}]

Let $j \in \{1,2\}$. For any primitive geodesic $\gamma_j \in \Pcal(\Sigma_j)$, we choose some $w_{\gamma_j} \in \pi_1(\Sigma_j)$ such that $\gamma_j$ corresponds to the conjugacy class $[w_{\gamma_j}]$. We may assume that $\wl(w_{\gamma_j}) = \wl([w_{\gamma_j}])$ where
$$
\wl([w_{\gamma_j}]) = \inf \left\{\wl(w'_j),~w_j' \in [w_{\gamma_j}] \right\}.
$$
As $\pi_1(\Sigma_j)$ is free, the element $w_{\gamma_j}$ is unique up to cyclic permutations. We denote $n_j(\gamma_j) = \wl([w_{\gamma_j}])$ ; then the Milnor-\v{S}varc lemma implies, for any $\gamma_j \in \Pcal(\Sigma_j)$ and $w_j \in [w_{\gamma_j}],$
$$
\ell(\gamma_j) = \ell([w_{\gamma_j}]) = \ell([w_j]) \leqslant \ell_\star(w_j) \leqslant D \wl(w_j) + D, 
$$
which gives
\begin{equation}\label{eq:milnor}
\ell(\gamma_j) \leqslant D n_j(\gamma_j) + D, \quad \gamma_j \in \Pcal(\Sigma_j).
\end{equation}
Our goal is now the following. Starting from geodesics $\gamma_j \in \Pcal(\Sigma_j)$, $j=1,2$, we want to construct about $\ell(\gamma_1) \ell(\gamma_2)$ distinct geodesics in $\Pcal$, by considering the conjugacy classes $[\bar w_{\gamma_2} \bar w_{\gamma_1}]$ where $\bar w_{\gamma_j}$ runs over all cyclic permutation of $w_{\gamma_j}$. However, as explained before, this process may conduct to produce several times the same geodesic in $\Pcal$ (recall Lemma \ref{lem:2}) so we are led to show estimates on the growth number of families of geodesics, as follows. For any $\gamma_j \in \Pcal(\Sigma_j)$, we define the family of conjugacy classes
$$
\Ccal_{\gamma_j} = \{ [w_{\star,j}^n w_{\gamma_j}],~[w_{\star,j}^n w_{\gamma_j}] \text{ is primitive},~n\in \Z\}.
$$
Here a class $[w]$ is said to be primitive if the closed geodesic corresponding to $[w]$ is primitive. We denote by $\Cscr_j$ the set of such families, and for each $\Ccal_j \in \Cscr_j$ we set
$$
\ell(\Ccal_j) = \min_{c \in \Ccal_j} \ell(c).
$$
The minimum exists by Lemma \ref{lem:0}. We have the following
\begin{lemm}\label{lem:thereisc}
There is $C > 0$ such that for $L$ big enough,
$$
\# \{\Ccal_j \in \Cscr_j,~\ell(\Ccal_j) \leqslant L\} \geqslant C \e^{h_\star L} / L.
$$
\end{lemm}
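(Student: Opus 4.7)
The plan is to combine Lemma \ref{lem:0} with the Parry--Pollicott prime orbit theorem for the Axiom~A flow on $(\Sigma_j, g|_{\Sigma_j})$, which yields $N_j(L) \sim e^{h_\star L}/(h_\star L)$ and in particular $N_j(L) - N_j(L-1) \geqslant c\, e^{h_\star L}/L$ for some $c > 0$ and $L$ large. The crucial observation I would prove is that each family $\Ccal_j \in \Cscr_j$ contains only boundedly many primitive geodesics whose length lies in a unit window $(L-1, L]$. Once this bounded-multiplicity fact is established with a constant $K = K(\Sigma, g, \gamma_\star)$ independent of $\Ccal_j$ and $L$, the conclusion follows by writing
$$
N_j(L) - N_j(L-1) \;=\; \sum_{\Ccal_j \in \Cscr_j} \bigl|\{c \in \Ccal_j : L-1 < \ell(c) \leqslant L\}\bigr| \;\leqslant\; K\cdot \#\{\Ccal_j \in \Cscr_j : \ell(\Ccal_j) \leqslant L\},
$$
since every family contributing to the left-hand sum has $\ell(\Ccal_j) \leqslant L$.

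To establish bounded multiplicity I would first promote Lemma \ref{lem:0} to a two-sided estimate: for the chosen representative $w$ of $\Ccal_j$ and $\ell_n := \ell([w_{\star,j}^{n}w])$,
$$
\ell_n \;=\; \ell(\Ccal_j) + \ell(\gamma_\star)\,|n - n_w| + O(1)
$$
uniformly in $\Ccal_j$ and $n$. The lower bound is exactly Lemma \ref{lem:0}. For the matching upper bound I would construct an explicit representative of $[w_{\star,j}^{n+1}w]$ by prepending one additional loop around $\gamma_\star$ to the geodesic representative of $[w_{\star,j}^{n}w]$ and joining them by a short path of length at most $\sup_{z \in K_j}\dist(z, \gamma_\star)$, which is finite because closed geodesics in $\Sigma_j$ all lie in the compact trapped set $K_j$. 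A CAT$(-\kappa)$ comparison of the type appearing in the proof of Lemma \ref{lem:0} then converts this into the uniform per-step inequality $\ell_{n+1} \leqslant \ell_n + \ell(\gamma_\star) + O(1)$, and telescoping from the minimizer $n_w$ yields the required upper bound on $\ell_n$. Together with the lower bound, this forces at most $K = O(1/\ell(\gamma_\star))$ indices $n$ to produce $\ell_n \in (L-1,L]$.

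The main obstacle will be the uniform upper bound on $\ell_n$: Lemma \ref{lem:0} controls the growth from below via a CAT$(-\kappa)$ comparison in the universal cover, but the matching upper bound is subtler because uniformity in $\Ccal_j$ is essential — the same $O(1)$ correction must work for all orbits, whose minimizing geodesics may sit anywhere inside the trapped set. Without this uniformity, only the weaker estimate $M_j(L) \gtrsim e^{h_\star L}/L^2$ emerges (from the cruder bound $|\Ccal_j\cap\{\ell\leqslant L\}| = O(L-\ell(\Ccal_j))$ already implied by Lemma \ref{lem:0}), and the improvement to $e^{h_\star L}/L$ claimed in the lemma hinges on showing that the translation length increases at the genuine rate $\ell(\gamma_\star)$ per additional $w_\star$-factor, with only a bounded correction independent of which orbit we look at.
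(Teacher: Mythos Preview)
Your approach differs from the paper's, and your proposal contains a mistaken assessment at the end that is worth correcting.

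You aim to upgrade Lemma~\ref{lem:0} to a two-sided estimate $\ell_n = \ell(\Ccal_j) + \ell(\gamma_\star)\,|n-n_w| + O(1)$, uniformly in $\Ccal_j$, so that each family has bounded multiplicity in unit length windows. Your sketch of the upper bound is incomplete: when you ``prepend a loop around $\gamma_\star$'' to the geodesic representative of $[w_{\star,j}^n w]$ based at some point $p$, the resulting curve lies in the free homotopy class of $h\cdot g$ where $[g]=[w_{\star,j}^n w]$ and $[h]=[w_{\star,j}]$ \emph{as separate conjugacy classes}, but $[hg]$ need not equal $[w_{\star,j}^{n+1} w]$ unless the basepoint and connecting path are chosen compatibly. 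Working in the universal cover instead, the bound $\ell_{n+1}\leqslant \ell_n + \ell(\gamma_\star) + 2\,\dist(A_{n},A_{w_\star})$ follows from the triangle inequality, but now the issue is whether $\dist(A_{n_w},A_{w_\star})$ is bounded uniformly over all families $\Ccal_j$ --- that the projected geodesics sit in the compact trapped set bounds the distance to \emph{some} lift of $\gamma_\star$, not to the specific axis $A_{w_\star}$. This can likely be pushed through, but it is more work than your sketch suggests.

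More importantly, your claim that the one-sided bound $|\Ccal_j\cap\{\ell\leqslant L\}|\leqslant C(L-\ell(\Ccal_j)+C)$ from Lemma~\ref{lem:0} only yields $\tilde N_j(L)\gtrsim e^{h_\star L}/L^2$ is wrong, and this is precisely where the paper's argument is more efficient than yours. The paper uses only Lemma~\ref{lem:0} as stated: from $N_j(L)\leqslant C\sum_{\ell(\Ccal_j)\leqslant L}(L-\ell(\Ccal_j)+C)$, an Abel transformation gives $N_j(L)\leqslant C'\sum_{k=1}^L \tilde N_j(k)$, hence $\sum_{k\leqslant L}\tilde N_j(k)\gtrsim e^{h_\star L}/L$. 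Then, since trivially $\tilde N_j(k)\leqslant N_j(k)$, one has $\sum_{k\leqslant L-M}\tilde N_j(k)\leqslant C e^{h_\star(L-M)}/(L-M)$; choosing $M$ large enough and subtracting gives $\sum_{L-M<k\leqslant L}\tilde N_j(k)\gtrsim e^{h_\star L}/L$, and since $\tilde N_j$ is nondecreasing, $M\,\tilde N_j(L)\gtrsim e^{h_\star L}/L$. So the full strength of the lemma follows from the one-sided estimate alone --- no matching upper bound is needed, and your ``main obstacle'' is avoidable.
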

\begin{proof}
By Lemma \ref{lem:0} we have for any $\gamma_j \in \Pcal(\Sigma_j)$
$$
\#\{n \in \Z,~\ell([w_{\star,j}^n w_{\gamma_j}]) \leqslant L\} \leqslant C(L - \ell(\Ccal_{\gamma_j}) +C).
$$
It follows that for large $L$,
$$
N_j(L) = \sum_{\substack{\gamma_j \in \Pcal(\Sigma_j) \\ \ell(\gamma_j) \leqslant L}} 1 
= \sum_{\substack{\Ccal_j \in \Cscr_j \\ \ell(\Ccal_j) \leqslant L}} \#\{c \in \Ccal_j,~\ell(c) \leqslant L\} 
 \leqslant C\sum_{\substack{\Ccal_j \in \Cscr_j \\ \ell(\Ccal_j) \leqslant L}} (L - \ell(\Ccal_j) + C).
$$
Let $\tilde N_j(L) = \#\{\Ccal_j \in \Cscr_j,~\ell(\Ccal_j) \leqslant L\}.$ Then an Abel transformation gives
$$
\begin{aligned}
\sum_{\substack{\Ccal_j \in \Cscr_j \\ \ell(\Ccal_j) \leqslant L}} (L - \ell(\Ccal_j) + C) &\leqslant \sum_{k=1}^{L} \left(\tilde N_j(k) - \tilde N_j(k - 1)\right) (L - k + C)
& \leqslant C' \sum_{k=1}^L \tilde N_j(k),
\end{aligned}
$$
and thus $\sum_{k=1}^L \tilde N_j(k) \geqslant C \exp({h_\star L}) / L$ for large $L$. On the other hand we have for $M > 0$
$$
\sum_{k=1}^{L-M} \tilde N_j(k) \leqslant \sum_{k=1}^{L-M} N_j(k) \leqslant C \sum_{k=1}^{L-M} \frac{\e^{h_\star k}}{k} \leqslant C' \frac{\e^{h_\star (L-M)}}{L-M}.
$$
Therefore, if $M$ is big enough, we have for any $L$ large enough
$$
M \tilde N_j(L) \geqslant \sum_{k=L-M}^L \tilde N_j(k) \geqslant \frac{N_j(L)}{2} \geqslant C'' \e^{h_\star L}/L,
$$
which concludes.
\end{proof}

For any $\Ccal_j \in \Cscr_j$, we choose some class $[w_{\Ccal_j}] \in \Ccal_j$ such that $\ell(\Ccal_j) = \ell([w_{\Ccal_j}])$. Also $w_{\Ccal_j} \in \pi_1(\Sigma_j)$ may be chosen cyclically reduced, meaning that $\wl(w_{\Ccal_j}) = \wl([w_{\Ccal_j}])$. Let $W_{\Ccal_j} \subset \pi_1(\Sigma_j)$ denote the set of cyclic permutations of $w_{\Ccal_j}$ Then $|W_{\Ccal_j}| = \wl([w_{\Ccal_j}])$ since $w_{\Ccal_j}$ is primitive (see \cite{lyndon1962equation}). 

\begin{lemm}\label{lem:W}
For any $\Ccal_j \in \Cscr_j$, there exists a subset $W_{\Ccal_j}' \subset W_{\Ccal_j}$ with 
$$|W_{\Ccal_j}'| \geqslant (|W_{\Ccal_j}| - 3)/4$$ and the following property. For any $p,q \in \Z$ and $w \in W_{\Ccal_j}'$,
$$
(w_{\star, j})^p w (w_{\star,j})^q \in W_{\Ccal_j}' \quad \implies \quad p = q = 0.
$$
\end{lemm}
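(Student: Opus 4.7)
The plan is to define $W_{\Ccal_j}'$ as a system of representatives for the orbits of the $\Z \times \Z$-action on $W_{\Ccal_j}$ given by $(p,q) \cdot w = w_{\star,j}^p\, w\, w_{\star,j}^q$. I would first verify this is an equivalence relation and then construct $W_{\Ccal_j}'$ by selecting exactly one element per orbit. For any $w \in W_{\Ccal_j}'$, if $w_{\star,j}^p w w_{\star,j}^q$ also lies in $W_{\Ccal_j}'$, then both elements belong to the same orbit and to $W_{\Ccal_j}'$, so they coincide: $w_{\star,j}^p w w_{\star,j}^q = w$. Since $w$ is not a power of $w_{\star,j}$ (as $w_{\Ccal_j}$ is not), and the centralizer of $w$ in the free group $\pi_1(\Sigma_j)$ is the cyclic subgroup generated by the primitive root of $w$, which does not contain $w_{\star,j}^{\pm 1}$, this forces $p = q = 0$.

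The key structural observation is that the orbits of this action are parametrized by \emph{$w_\star$-cores}. Using that $\pi_1(\Sigma_j)$ is free and $w_{\star,j}$ is cyclically reduced, every reduced word $w$ admits a unique decomposition $w = w_{\star,j}^a \, v \, w_{\star,j}^b$ as a reduced product in which $v$ neither starts nor ends with $w_{\star,j}^{\pm 1}$; call $v$ the core of $w$. Since the action preserves the core, orbits correspond bijectively to distinct cores appearing in $W_{\Ccal_j}$. Moreover, the elements of an orbit sharing a core $v$ correspond to those cyclic shifts of $w_{\Ccal_j}$ that slide through a single maximal $w_{\star,j}^{\pm}$-block of length $K_v \cdot N$ in the cyclic word of $w_{\Ccal_j}$, where $N = |w_{\star,j}| = 4\grm_j$; exactly $K_v + 1$ cyclic shifts land in this orbit, corresponding to the splittings $w_{\star,j}^{K_v - i} \cdot w_{\star,j}^{i}$ for $i = 0, \dots, K_v$.

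To count orbits, I use that maximal $w_{\star,j}^{\pm}$-blocks are pairwise disjoint in the cyclic word of $w_{\Ccal_j}$ (whose total length is $|W_{\Ccal_j}|$), so $\sum_v K_v \cdot N \leqslant |W_{\Ccal_j}|$ and hence $\sum_v K_v \leqslant |W_{\Ccal_j}|/4$, using $N \geqslant 4$. Each non-singleton orbit of size $K_v + 1$ contributes $K_v$ redundant elements relative to the number of orbits, so the number of orbits, which equals $|W_{\Ccal_j}'|$, is at least $|W_{\Ccal_j}| - \sum_v K_v \geqslant \tfrac{3}{4}|W_{\Ccal_j}|$, comfortably dominating the required bound $(|W_{\Ccal_j}|-3)/4$. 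The main obstacle is rigorously establishing the correspondence between orbits and maximal $w_{\star,j}^{\pm}$-blocks: this requires careful analysis of free-group normal forms together with an invocation of Lemma \ref{lem:0} to exclude anomalously large orbits arising from conjugacy-class-preserving operations beyond the block-sliding ones described.
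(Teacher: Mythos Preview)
Your construction of $W_{\Ccal_j}'$ as a set of orbit representatives for the action $(p,q)\cdot w = w_{\star,j}^p\, w\, w_{\star,j}^q$ is valid, and your first paragraph correctly verifies the required property. This is genuinely different from the paper: for $\grm_j>1$ the paper takes $W_{\Ccal_j}'$ to consist of those $w$ lying in a \emph{trivial} orbit (called ``not of type $A$''), and for $\grm_j=1$ it adds one representative from certain small orbits. Your $W'$ always contains the paper's, so if your count went through you would obtain the stronger bound $|W'|\geqslant \tfrac34|W|$.

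The gap is in the orbit count. The claim that the action preserves the core $v$ in a decomposition $w=w_{\star,j}^a\, v\, w_{\star,j}^b$ is not justified: even when $v$ neither begins nor ends with $w_{\star,j}^{\pm1}$ as a subword, the concatenation $w_{\star,j}^{p+a}\cdot v$ need not be reduced once $p+a$ and $a$ have opposite signs (e.g.\ $v$ may begin with $a_1$, which cancels against the terminal $a_1^{-1}$ of $w_{\star,j}^{-1}$). Consequently neither the orbit--core bijection nor the identification of an orbit with the $K_v+1$ shifts through a single maximal $w_{\star,j}$-block is established, and Lemma~\ref{lem:0}, which concerns translation lengths, gives no leverage on this purely combinatorial question.

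The paper avoids all of this. It observes that any relation $w_{\star,j}^p w w_{\star,j}^q\in W_{\Ccal_j}$ with $(p,q)\neq(0,0)$ forces at least two letter cancellations, so $w$ must begin or end with one of six explicit two-letter patterns read off from $w_{\star,j}=[a_1,b_1]\cdots[a_{\grm_j},b_{\grm_j}]$. A brief case check on consecutive cyclic shifts $w_k,w_{k+1},w_{k+2},w_{k+3}$ then shows at least one of every four is not of type $A$, yielding the weaker but sufficient bound $(|W_{\Ccal_j}|-3)/4$ with much less effort.
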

\begin{proof}
We prove the lemma for $j = 1$. Let $\Ccal_1 \in \Cscr_1$ ; we set $\grm = \grm_1, w_\star = w_{\star, 1}$ and $W = W_{\Ccal_1}$ to simplify notations. For $w \in W$, we will say that $w$ is of type $A$ if $(w_{\star})^pw(w_{\star})^{q} \in W_{\Ccal_1}$ for some $p,q\in \Z \setminus (0, 0)$. If $w$ is of type $A$, then exactly $2 \grm(|p| + |q|) \geqslant 2$ simplifications occur in the word $w' = (w_\star)^pw(w_\star)^{q}$, since $\wl(w_{\star}) = 4 \grm$. As $w_\star = a_1 b_1 a_1^{-1}b_1^{-1} \cdots a_\grm^{-1} b_\grm^{-1}$ and at least $2$ simplifications occur in $w$', we see that $w$ has necessarily one of the following forms :

\begin{align*}
&a_1b_1 \cdots                    &\text{(1)}&&\qquad
&a_1 \cdots b_\grm^{-1}       &\text{(3)}&&\qquad
&\cdots b_1^{-1} a_1^{-1}       &\text{(5)}
 \\[1em]
&b_\grm a_\grm \cdots  &\text{(2)}&&\qquad
&b_\grm \cdots a_1^{-1}                 &\text{(4)}&&\qquad
&\cdots a_\grm^{-1} b_\grm^{-1}            &\text{(6)}
\end{align*}

Denote $n = \wl(w)$ and $w = u_1 \cdots u_n$ with $u_j \in \{a_k,b_k, a_k^{-1}, b_k^{-1},~k = 1, \dots, \grm\}$.
Set $w_k = u_{\sigma^k(1)} \cdots u_{\sigma^k(n)}$ for $k \in \N$ where $\sigma$ is the permutation sending $(1, \dots, n)$ to $(n, 1, \dots, n-1)$, so that $W = \{w_k,~k = 1, \dots,n\}.$

Assume that $w_k$ is of type $A$. If $w_k$ is of the form (5) or (6), it is clear that $w_{k+1}$ cannot be of type $A$. If $w_k$ is of the form (3) or (4), and if $w_{k+1}$ is of type $A$, $w_{k+1}$ is necessarily the form (5) or (6), so that $w_{k+2}$ cannot be of type $A$.  Finally assume that $w_k$ is of the form (1) or (2). Then we see that $w_{k+1}$ cannot be of the form (1) or (2) except if $\grm = 1$. Therefore if $w_{k+1}$ is still of type $A$ and $\grm > 1$, it has one of the forms (3), (4), (5) or (6) and $w_{k+2}$ or $w_{k+3}$ is not of type $A$ by what precedes. We showed that if $\grm > 1$ and $w_k$ is of type $A$, one of the words $w_{k+1}, w_{k+2}, w_{k+3}$ is not of type $A$. Therefore by denoting $W'$ the set of words which are not of type $A$, the conclusion of the lemma holds.

Now suppose $\grm = 1$ so that $w_\star = a_1b_1a_1^{-1}b_1^{-1}$. If $w_k$ is of type $A$ and not of the form (1) or (2), then $w_{k+1}$ or $w_{k+2}$ is not of type $A$ by what precedes. Thus we assume that $w_k$ is of the form (1) or (2), but not of the form (3), (4), (5) or (6) (such words will be called of type $B$). In particular, we have $w_k = \cdots u_{\sigma^k(n)}$ with $u_{\sigma^k(n)} \neq b_1^{-1}, a_1^{-1}$ (as $\grm = 1$). Thus, in the word $(w_\star)^pw_k(w_\star)^q$, simplifications can only occur between $(w_\star)^p$ and $w$, and it is not hard to see that $\#(\dom_k) \leqslant 2$ where
$$
\dom_k = \{(w_\star)^p w_k (w_\star)^q,~p, q \in \Z\} \cap W.
$$
Denote $\mathscr{O} = \{\dom_k,~k = 1, \dots,n,~w_k \text{ is of type } B\}.$ For any $\dom \in \mathscr{O}$ we choose some $w_\dom \in \dom$. Then
$$
W' = \{w \in W,~w \text{ is not of type }A\} \cup \{w_\dom,~\dom \in \mathscr{O}\}
$$
satisfies the conclusion of the lemma.
\end{proof}

Using Lemmas \ref{lem:1} and \ref{lem:2} we thus obtain that the map
$$
\begin{matrix}
\displaystyle{\bigcup_{(\Ccal_1, \Ccal_2) \in \Cscr_1 \times \Cscr_2}}  &W_{\Ccal_1}' \times W_{\Ccal_2}'& \longrightarrow & \Pcal, \\ &(w_1, w_2)& \longmapsto & [w_2 w_1]
\end{matrix}
$$
is injective. Moreover for any $(w_1, w_2)$ we have $\ell([w_2 w_1]) \leqslant \ell([w_1]) + \ell([w_2]) + 4\diam \Sigma + 2$. Indeed, let $\gamma_j \in \Pcal(\Sigma_j)$ denote the unique geodesic corresponding to the class $[w_j]$ for $j=1,2$. Then we may find a smooth curve $\tilde \gamma_j$ based at $x_\star$ such that $\tilde \gamma_j = w_j$ as elements of $\pi_1(\Sigma_j)$ and $\ell(\tilde \gamma_j) \leqslant \ell([w_j]) + 2 \diam \Sigma + 1$ (for example by removing some appropriate small piece of $\gamma_j$ and link the endpoints of the cutted curve to $x_\star$). Thus $\ell([w_2w_1]) \leqslant \ell(\tilde \gamma_2 \tilde \gamma_1) \leqslant \ell(\gamma_1) + \ell(\gamma_2) + 4\diam \Sigma + 2 = \ell([w_1]) + \ell([w_2]) + 4\diam \Sigma + 2.$ We thus obtain, with $R = 4\diam \Sigma + 2$ and $C$ being a constant changing at each line,
$$
\begin{aligned}
\sum_{\substack{\gamma \in \Pcal \\ i(\gamma, \gamma_\star) = 2 \\ \ell(\gamma) \leqslant L}} 1&\geqslant \sum_{\substack{(\Ccal_1, \Ccal_2) \\ \ell(\Ccal_1) + \ell(\Ccal_2) \leqslant L - R}} |W_{\Ccal_1}'| |W_{\Ccal_2}'| \\
& \geqslant \sum_{\substack{(\Ccal_1, \Ccal_2) \\ \ell(\Ccal_1) + \ell(\Ccal_2) \leqslant L - R}} \left(C\ell(\Ccal_1) - C\right)\left(C\ell(\Ccal_2) - C\right) \\
&=  A(L) + C\sum_{\substack{(\Ccal_1, \Ccal_2) \\ \ell(\Ccal_1) + \ell(\Ccal_2) \leqslant L - R}} \ell(\Ccal_1) \ell(\Ccal_2)
\end{aligned}
$$
where we used $|W_{\Ccal_j}'| \geqslant C|W_{\Ccal_j}| - C = C\wl(w_j) - C \geqslant \displaystyle{C\ell(\Ccal_j) - C}$ for $j = 1,2$ (this follows by Lemma \ref{lem:W} and (\ref{eq:milnor})) and the fact that $\ell(w_j) = \ell(\Ccal_j)$ for any $w_j \in W_j$, and where
$$
A(L) \leqslant C \sum_{\substack{(\Ccal_1, \Ccal_2) \\ \ell(\Ccal_1) + \ell(\Ccal_2) \leqslant L - R}} (\ell(\Ccal_1) + \ell(\Ccal_2) + 1).
$$
By Lemma \ref{lem:thereisc} we have
\begin{equation}\label{eq:secondest}
C^{-1} \e^{h_\star L} / L \leqslant \#\{\Ccal_j,~ \ell(\Ccal_j) \leqslant L\} \leqslant C\e^{h_\star L}/L,
\end{equation}
and from this it is not hard to see that $A(L) \ll L\e^{h_\star L}$ as $L \to +\infty.$ Moreover, using (\ref{eq:secondest}) and similar techniques we used to obtain (\ref{eq:firstest}) (for example by noting that there is $C$ such that $\tilde N_j(L) - \tilde N_j(L-C) \geqslant C \e^{h_\star L} / L$ for any $L$ large enough, where $\tilde N_j(L) = \#\{\Ccal_j,~ \ell(\Ccal_j) \leqslant L\}$, as it follows from \eqref{eq:secondest}) we get for $L$ large enough
$$
\sum_{\substack{(\Ccal_1, \Ccal_2) \\ \ell(\Ccal_1) + \ell(\Ccal_2) \leqslant L - R}} \ell(\Ccal_1) \ell(\Ccal_2) \geqslant C (L-R)\e^{h(L-R)},
$$
which concludes the proof of Proposition \ref{prop:lowerbound2}.
\end{proof}

\subsubsection{Upper bound}

Each $\gamma \in \Pcal_2$ with $\ell(\gamma) \leqslant L$ is in the conjugacy class $w_1 w_2$ for some $w_j \in \pi_1(\Sigma_j)$ with $\ell_\star(w_1) + \ell_\star(w_2) \leqslant L +C$. Therefore (\ref{eq:margulis}) implies
$$
\begin{aligned}
N(2, L) &\leqslant \sum_{\substack{w_j \in \pi_1(\Sigma_j) \\ \ell_\star(w_1) + \ell_\star(w_2) \leqslant L + C}} 1 \\
&\leqslant \sum_{k=0}^{L+C} C \exp(h_1 k) \exp(h_2(L-k+C)),
\end{aligned}
$$
which gives for large $L$, if $h_\star = \max(h_1, h_2)$,
$$
N(2, L) \leqslant \left \{ \begin{matrix} C L \exp(h_\star L) &\text{ if } &h_1 = h_2, \vspace{0.15cm}   \\ C \exp(h_\star L) &\text{ if } & h_1 \neq h_2. \end{matrix} \right.
$$
Iterating this process we obtain (with $C$ depending on $n$)
$$
N(n, L) \leqslant \left \{ \begin{matrix} C L^{2n - 1} \exp(h_\star L) &\text{ if } &h_1 = h_2, \vspace{0.15cm}   \\ C L^{n-1} \exp(h_\star L) &\text{ if } & h_1 \neq h_2. \end{matrix} \right.
$$

\subsection{Relative growth of geodesics with small intersection angle}\label{subsec:aprioriangles}

For any $\eta > 0$ small, we consider 
$
N(n, \eta, L) = \#(\Pcal_{\eta,n}(L))
$
where $\Pcal_{\eta,n}(L)$ is the set of closed geodesics $\gamma : \R/\Z \to \Sigma$ of length not greater than $L$, intersecting $\gamma_\star$ exactly $n$ times, and such that there is $\tau$ with $\gamma(\tau) \in \gamma_\star$ and $\mathrm{angle}(\dot \gamma(\tau), T_{\gamma(\tau)}\gamma_\star) < \eta.$ The purpose of this paragraph is to prove the following estimate.

\begin{lemm}\label{lem:estangle}
For any $L_0 > 0$, there is $\eta > 0$ such that for any $L$ big enough
$$
N(n, \eta, L) \leqslant 2n N(n, L - L_0).
$$
\end{lemm}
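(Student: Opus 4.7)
The plan is to build an at-most-$2n$-to-one map $\Phi \colon \Pcal_{\eta,n}(L) \to \{\gamma^* \in \Pcal : i(\gamma^*, \gamma_\star) = n,\ \ell(\gamma^*) \leq L - L_0\}$ by performing a Dehn twist along $\gamma_\star$ at the near-tangent crossing. The geometric mechanism: a crossing at angle $\theta$ with $|\sin\theta|<\eta$ forces $\gamma$ to run nearly parallel to $\gamma_\star$ for a distance of order $\log(1/\eta)$, and ``untwisting'' this near-parallel arc by one period of $\gamma_\star$ produces a shorter geodesic in a nearby homotopy class with the same intersection number.

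The first step would be to establish the key local estimate: in the coordinates $(\tau,\rho,\theta)$ of Lemma \ref{lem:coordinates}, if $\gamma$ crosses $\gamma_\star$ at $(\tau_0,0,\theta_0)$ with $|\sin\theta_0|<\eta$, then $\gamma$ remains in the tube $\{|\rho|<\delta\}$ for a time interval of length at least $T(\eta)$, with $T(\eta)\to +\infty$ as $\eta\to 0$. This follows from the geodesic equation $\ddot\rho=\tfrac{1}{2}f'(\rho)\dot\tau^2$ for the metric \eqref{eq:metric}, since $f'(0)=0$ and $f''(0)>0$: linearizing gives $\rho(t)\sim(\sin\theta_0/\omega)\sinh(\omega t)$ with $\omega^2\propto f''(0)\cos^2\theta_0$, so the sojourn time in the tube is of order $\omega^{-1}\log(\delta\omega/|\sin\theta_0|)$. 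This is equivalent to the bound $|\sin\theta(S_\pm z)|\geq C\exp(-C\ell_\pm(z))$ used in \eqref{eq:sinus}. Fix $\eta$ so that $T(\eta) > L_0 + \ell(\gamma_\star) + R$ for some constant $R$ depending only on $(\Sigma,g)$.

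Next I would define $\Phi$. For $\gamma\in\Pcal_{\eta,n}(L)$, select a distinguished near-tangent crossing $z_0(\gamma)\in I_\star(\gamma)$, e.g.\ the one minimizing $|\sin\theta|$ with ties broken by a fixed canonical rule. Lift $\gamma$ to its axis in the universal cover $\widetilde\Sigma$, with axial covering transformation $g_\gamma$; the near-tangent crossing lifts to a specific translate $\widetilde\gamma_\star$ of $\gamma_\star$, and by Step~1 the axis of $\gamma$ stays in a tubular neighborhood of $\widetilde\gamma_\star$ for $\tau$-distance at least $T(\eta)\cos\theta_0 > \ell(\gamma_\star)+L_0$. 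Let $t_\star\in\pi_1(\Sigma)$ be the loop represented by $\widetilde\gamma_\star$, and set $\Phi(\gamma) = \gamma^*$ to be the unique closed geodesic in the free homotopy class of $t_\star^{\mp 1} g_\gamma$, with the sign chosen so that the translation length strictly decreases. Comparing triangles in the universal cover yields $\ell(\gamma^*)\leq \ell(\gamma)-L_0$; the strict convexity from Lemma \ref{lem:Sigma} together with the nearness of $\gamma^*$ to $\gamma$ away from the twist site ensures that $i(\gamma^*,\gamma_\star)=n$, the near-tangent intersection being replaced by a more transverse one at a shifted point along $\gamma_\star$.

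Finally, the fiber of $\Phi$ over $\gamma^*$ is parametrized by the choice of one of the $n$ intersection points of $\gamma^*$ with $\gamma_\star$ at which to retwist, together with a sign $\pm 1$ for the twist direction; hence $|\Phi^{-1}(\gamma^*)|\leq 2n$, yielding the claimed inequality $N(n,\eta,L)\leq 2n\, N(n,L-L_0)$. The main obstacle is the topological verification that the twist preserves $n$ rather than changing the intersection count---the concern being that the modified geodesic $\gamma^*$ could, in principle, gain or lose an intersection far from the twist site. Ruling this out requires controlling $\gamma^*$ throughout the tubular neighborhood via the convexity of $\partial\Sigma_\delta$ (Lemma \ref{lem:Sigma}), and carefully tracking how the near-tangent arc is replaced by a more transverse one under the twist.
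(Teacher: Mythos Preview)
Your strategy is essentially the paper's: exploit that a near-tangent crossing forces $\gamma$ to shadow $\gamma_\star$ for a long time, then perform a Dehn twist along $\gamma_\star$ to land on a shorter geodesic with the same intersection number, and bound the fibers by $2n$. However, there is a genuine quantitative gap in your length estimate.

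You twist by a \emph{single} power $t_\star^{\mp 1}$ and then assert that ``comparing triangles in the universal cover yields $\ell(\gamma^*)\leq \ell(\gamma)-L_0$''. This is not correct for arbitrary $L_0$. Cutting one loop around $\gamma_\star$ out of a curve that shadows $\gamma_\star$ closely shortens it by approximately $\ell(\gamma_\star)$, up to an error of order $\delta$; equivalently, the translation length of $t_\star^{-1}g_\gamma$ is at most $\ell(\gamma)-\ell(\gamma_\star)+O(1)$, independent of how long the shadowing segment is. Since $\ell(\gamma_\star)$ is a fixed constant of the surface, a single twist cannot achieve a reduction by an arbitrary $L_0 > \ell(\gamma_\star)$, no matter how small you take $\eta$. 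The long sojourn time $T(\eta)$ does not enter the length drop from one twist; it only tells you how \emph{many} twists are available.

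The paper repairs exactly this point: given $L_0$, it first chooses $K\in\N$ with $K\ell(\gamma_\star)>L_0+C$, and \emph{then} chooses $\eta$ small enough that the near-tangent arc shadows $\gamma_\star$ for at least $K$ full periods. One can then write $\gamma\sim w_\star^{\pm K} w'$ with $\ell_\star(w')\leq \ell(\gamma)-K\ell(\gamma_\star)+C$, and the map sending a short geodesic $\gamma'$ to the set $\{[w_j w_\star^{\pm K}]:j=1,\dots,n\}$ (twisting at each of the $n$ crossings, with both signs) covers $\Pcal_{\eta,n}(L)$ by images of $\Pcal_n(L-L_0)$. Organizing the argument in this direction also makes the $2n$ fiber bound immediate, whereas your inverse map $\Phi$ requires an extra argument that every preimage arises from one of the $2n$ retwists of $\gamma^*$. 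Your intersection-number preservation concern is real but is handled, as you anticipate, by the same homotopy argument as in Lemma~\ref{lem:1}: adding or removing powers of $w_\star$ at a crossing does not change $i(\cdot,\gamma_\star)$.
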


\begin{proof}

Let $P_{2n}(\Pcal_n)$ denote the set of subsets of $\Pcal_{n}$ which are of cardinal not greater than $2n$. Then for $K \in \N_{\geqslant 1}$ we construct a map
$$
\Psi_K : \Pcal_n \to P_{2n}(\Pcal_n),
$$
as follows. Let $\gamma : \R/\Z \to \Sigma$ be an element of $\Pcal_n$ and let $\tau_1, \dots, \tau_n \in \R/\Z$ be pairwise distinct such that $\gamma(\tau_j) \in \gamma_\star$. For any $j$, choose a path $c_j$ contained in $\gamma_\star$ and linking $x_\star$ to $\gamma(\tau_j).$ Then set $w_j = c_j^{-1} \gamma_j c_j \in \pi_1(\Sigma)$, where $\gamma_j : \R/\Z \to \Sigma$ is defined by $\gamma_j(t) = \gamma(\tau_j + t).$ Then set 
$$
\Psi_K(\gamma) = \left\{\left[w_j w_\star^{\varepsilon K}\right],~ j = 1, \dots, n,~ \varepsilon \in \{-1, 1\}\right\} \in P_{2n}(\Pcal_n).
$$
Here the class $\left[w_j w_\star^{\varepsilon K}\right]$ is identified with the unique geodesic contained in the free homotopy class of $w_j w_\star^{\varepsilon K}$. Note that $\Psi_K$ is well defined: for different choices of $c_j$, we would obtain $w_\star^{k_j} w_j w_\star^{-k_j}$ instead of $w_j$ for some $k_j \in \Z$; however the class $\left[w_\star^{k_j} w_j w_\star^{-k_j} w_\star^{\varepsilon K}\right]$ coincides with $\left[w_j w_\star^{\varepsilon K}\right]$. Moreover, the image of $\Psi_K$ is indeed contained in $\Pcal_n$. Indeed, by similar techniques used to prove Lemma \ref{lem:1}, one can show that the geodesic $[w_j w_\star^{\varepsilon K}]$ intersects $\gamma_\star$ exactly $n$ times, as $\gamma$ does (adding turns aroung $\gamma_\star$ does not change the intersection number with $\gamma_\star$). 

Our goal is now to show that there is $C > 0$ such that for any $K \in \N_{\geqslant 1}$, there is $\eta > 0$ such that
\begin{equation}\label{eq:goal}
\Pcal_{\eta, n}(L) \subset \bigcup_{\gamma \in \Pcal_n(L-K\ell(\gamma_\star) + C)} \Psi_K(\gamma).
\end{equation}
Let $\varepsilon > 0$ smaller than $\rho_g / 2$ where $\rho_g$ is the injectivity radius of $(\Sigma, g)$, and $K \in \N_{\geqslant 1}$. Then there is $\eta > 0$ such that if $z = (0, \tau, \theta) \in \gamma_\star$ (here we use the coordinates of Lemma \ref{lem:coordinates}) with $|\theta| < \eta$ (resp. $|\theta - \pi| < \eta$), then if $z' = (0, \tau, 0)$ (resp. $z' = (0, \tau, \pi)$), we have
$$
\mathrm{dist}_\Sigma(\pi(\varphi_t(z)), \pi(\varphi_t(z'))) \leqslant \varepsilon, \quad t \in [0, K\ell(\gamma_\star)].
$$
Let $c(t) = \pi(\varphi_t(z))$ for $t \in [0, K\ell(\gamma_\star)]$, and close the path $c$ by using the exponential map at $\pi(z)$, to obtain a closed curve $\tilde \gamma : \R/\Z \to \Sigma$ of length not greater than $K\ell(\gamma_\star) + 2\varepsilon$. If $\varepsilon$ (and thus $\eta$) is small enough, we have $\tilde \gamma = \gamma_\star^{\pm K}$ in $\pi_1(\Sigma, \pi(z))$ whenever $|\sin \theta| < \eta$. In particular, if $\gamma : \R/\ell(\gamma)\Z \to \Sigma$ is a closed geodesic intersecting $\gamma_\star$ exactly $n$ times, and with at least one intersection angle smaller than $\eta$, then we can write
\begin{equation}\label{eq:sim}
\gamma \sim w_\star^{\pm K} w'
\end{equation}
for some $w' \in \pi_1(\Sigma)$, satisfying $\ell_\star(w') \leqslant \ell(\gamma) - K\ell(\gamma_\star) + C$ (for some $C > 0$ independent of $\gamma$). Here $a \sim b$ means that $a$ is freely homotopic to $b$. As  before, the unique geodesic contained in the free homotopy class of $w'$ intersects $\gamma_\star$ exactly $n$ times (removing some turns around $\gamma_\star$ does not change the intersection number).

Finally, by similar techniques used in the proofs of Lemmas \ref{lem:constructgeodesics} or \ref{lem:2}, one can see that $[w' w_\star^{\pm K}] \in \Psi_K([w'])$ (again, we identify the geodesic freely homotopic to $w'$ with the class $[w']$) for any $w' \in \pi_1(\Sigma)$ such that $i([w'], \gamma_\star) = n$. Moreover, if $\ell_\star(w') \leqslant \ell(\gamma) - K\ell(\gamma_\star) + C$, then we have of course $\ell([w']) \leqslant \ell_\star(w') \leqslant \ell(\gamma) - K\ell(\gamma_\star) + C$. Thus (\ref{eq:sim}) implies that each $\gamma \in \Pcal_{\eta, n}(L)$ lies  in $\Psi(\gamma')$ for some $\gamma' \in \Pcal_n$ (given by $[w']$) such that $\ell(\gamma') \leqslant L - K\ell(\gamma_\star) + C$. The lemma follows.
\end{proof}

\section{A Tauberian argument}\label{sec:tauberian}

The goal of this section is to give an asymptotic growth of the quantity
$$
N_\pm(n, \chi, t) = \sum_{\substack{\gamma \in \Pcal \\ i(\gamma_\star, \gamma) = n \\ \ell(\gamma) \leqslant t}} I_{\star, \pm}(\gamma, \chi)
$$
as $t \to +\infty$, where $\chi \in C^\infty_c(\partial \setminus \partial_0)$ and 
$
I_{\star, \pm}(\gamma, \chi) = \prod_{z \in I_{\star, \pm}(\gamma)} \chi(z).
$

\subsection{The case $\gamma_\star$ is not separating}\label{subsec:notseparating}

By \cite{dyatlov2016pollicott}, we know that the zeta function
$$
\zeta_{\Sigma_\star}(s) = \prod_{\gamma \in \Pcal_\star} \left(1-\e^{-s\ell(\gamma)}\right)
$$
extends meromorphically to the whole complex plane, and moreover we may write
$$
\zeta_{\Sigma_\star}'(s) / \zeta_{\Sigma_\star}(s) = \sum_{k = 0}^2 (-1)^k \mathrm{tr}^\flat\left(\e^{\pm\varepsilon s} \varphi_{\mp \varepsilon}^*R_{\pm, \delta}(s)|_{\Omega^k_c(M_\delta) \cap \ker \iota_X}\right),
$$
where the flat trace is computed on $M_\delta$. Here $\Pcal_\star$ denote the set of primitive closed geodesics of $(\Sigma_\star, g)$. By \cite{parry1990zeta}, $\zeta_{\Sigma_\star}' / \zeta_{\Sigma_\star}$ is holomorphic in $\{\Re(s) \geqslant h\}$ except for a simple pole at $s = h_\star$, where $h_\star > 0$ is the topological entropy of the geodesic flow of $(\Sigma_\star, g)$ restricted to the trapped set (as defined in the introduction). Moreover, it is shown in \cite{dyatlov2016pollicott} that $s \mapsto R_{\pm, \delta}(s)|_{\Omega^k\cap \ker \iota_X}$ has no pole in $\{\Re(s) > 0\}$ for $k = 0$ and $k = 2$.
Write the Laurent expansion of $R_{\pm, \delta}(s)$ given in \S\ref{subsec:resolv} near $s = h_\star$ as
$$
R_{\pm,\delta}(s)= Y_{\pm, \delta}(s) + \frac{\Pi_{\pm, \delta}(h_\star)}{s-h_\star} : \Omega^\bullet_c(M_\delta) \to \mathcal{D}'^\bullet(M_\delta).
$$
Denote $\Omega^k = \Omega^k_c(M_{\delta})$ and $\Omega^k_0 = \Omega^k \cap \ker \iota_X.$ Then the above comments show that 
$$
\rank(\Pi_{\pm, \delta}|_{\Omega^1_0}) = 1.
$$
As $R_{\pm, \delta}(s)$ commutes with $\iota_X$, it preserves the spaces $\Omega^k_0$. Writing $\Omega^k = \Omega^k_0 \oplus \alpha \wedge \Omega^{k-1}_0$ we have for any $w = u + \alpha \wedge v$ with $\iota_Xu = 0$ and $\iota_X v = 0$,
$$
\Pi_{\pm, \delta}(h_\star)|_{\Omega^2}(u + \alpha \wedge v) = \Pi_{\pm, \delta}(h_\star)|_{\Omega^2_0}(u) + \alpha \wedge \Pi_{\pm, \delta}(h_\star)|_{\Omega^1_0}(v).
$$
Thus $\Pi_{\pm, \delta}|_{\Omega^2} = \alpha \wedge \iota_X \Pi_{\pm, \delta}|_{\Omega^1_0}$. By Proposition \ref{prop:scatresolv} and the fact that $\varphi_{\pm \varepsilon}^* \Pi_{\pm, \delta}(h_\star) = \e^{\pm \varepsilon h_\star} \Pi_{\pm, \delta}(h_\star),$ we have near $s = h_\star$
$$
\tilde \Sc_\pm(s) = Y_{\pm}(s) + \frac{\psi^* \iota^*\iota_X\Pi_{\pm,\delta}(h_\star)\iota_*}{s-h_\star},
$$
where $Y_\pm(s)$ is holomorphic near $s = h_\star$. We denote
$$
\Pi_{\pm, \partial} = \psi^*\iota^*\iota_X \Pi_{\pm,\delta}(h_\star)\iota_* : \Omega^\bullet(\partial) \to \mathcal{D}'^\bullet(\partial).
$$
Then by what precedes, and since $\iota_X \Pi_{\pm, \delta}(h_\star)|_{\Omega^1} = 0$, we obtain that $\rank(\Pi_{\pm, \partial}) = 1$. Finally for any $\chi \in C^\infty_c(\partial \setminus \partial_0)$ we set
$$c_{\pm}(\chi) = \strf(\chi \Pi_{\pm,\partial}).$$ 

\begin{lemm}\label{lem:estnotseparating}
Let $\chi \in C^\infty_c(\partial \setminus \partial_0)$ such that $c_\pm(\chi) > 0$. Then it holds
$$
N_\pm(n,\chi, t) \sim \frac{(c_\pm(\chi) t)^n}{n !} \frac{\e^{h_\star t}}{h_\star t}, \quad t \to +\infty.
$$
\end{lemm}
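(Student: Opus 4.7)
The strategy is to recognize $N_\pm(n,\chi,t)$ as the counting function of a Dirichlet series, to which Delange's Tauberian theorem \cite{delange1954generalisation} can be applied. Set
$$f(s) = \sum_{\substack{\gamma \in \Pcal \\ i(\gamma,\gamma_\star)=n}} e^{-s\ell(\gamma)} I_{\star,\pm}(\gamma,\chi),$$
so that $N_\pm(n,\chi,t)$ is its summatory function, and the a priori bound $N(n,L) \ll L^{n-1}e^{h_\star L}$ from \S\ref{subsubsec:uppernotseparating} ensures absolute convergence of $f$ on $\{\Re(s)>h_\star\}$.

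To analyze $f$ near $s=h_\star$ I would first disentangle primitive from iterated orbits in the trace formula of Proposition \ref{prop:computeflattrace}. Every closed geodesic with $i(\gamma,\gamma_\star)=n$ writes uniquely as $(\gamma^\#)^k$ with $\gamma^\#$ primitive and $ki(\gamma^\#,\gamma_\star)=n$, and $I_{\star,\pm}(\gamma)=I_{\star,\pm}(\gamma^\#)$. Hence the trace formula reorganizes as
$$\strf\bigl(\chi\tilde \Sc_\pm(s)\bigr)^n = n\, f(s) + R(s), \qquad R(s) = n \sum_{\substack{k\geq 2,\,k\mid n \\ \gamma^\#\in\Pcal,\,i(\gamma^\#,\gamma_\star)=n/k}} \frac{1}{k} e^{-sk\ell(\gamma^\#)} I_{\star,\pm}(\gamma^\#,\chi)^k.$$
For each of the finitely many $k\geq 2$ contributing, the bound $\#\{\gamma^\#\in\Pcal:\,i(\gamma^\#,\gamma_\star)=n/k,\,\ell(\gamma^\#)\leq L\} \ll L^{n/k-1}e^{h_\star L}$ implies the inner series converges absolutely for $k\Re(s)>h_\star$. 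Thus $R(s)$ is holomorphic on $\{\Re(s)>h_\star/2\}$ and contributes no singularity at $s=h_\star$.

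Next I would identify the pole of the left-hand side. The Laurent expansion $\tilde \Sc_\pm(s) = Y_\pm(s) + \Pi_{\pm,\partial}/(s-h_\star)$ gives $(\chi\tilde \Sc_\pm(s))^n$ a pole of order at most $n$ at $s=h_\star$, with leading coefficient $(\chi\Pi_{\pm,\partial})^n$. Since $\Pi_{\pm,\partial}$ has rank one, writing $\chi\Pi_{\pm,\partial}=(\chi u)\otimes\phi$ yields $(\chi\Pi_{\pm,\partial})^n = \phi(\chi u)^{n-1}\,\chi\Pi_{\pm,\partial}$, and a careful sign tracking through the super flat trace (using that the residue is concentrated in a single form degree) gives $\strf\bigl((\chi\Pi_{\pm,\partial})^n\bigr)=c_\pm(\chi)^n$. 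Combining this with the Parry--Pollicott fact recalled at the start of \S\ref{subsec:notseparating}---namely that $\tilde \Sc_\pm(s)$ has no pole on $\{\Re(s)\geq h_\star\}$ besides $s=h_\star$---one concludes
$$f(s) = \frac{c_\pm(\chi)^n / n}{(s-h_\star)^n} + \sum_{j=1}^{n-1}\frac{a_j}{(s-h_\star)^j} + g(s),$$
with $g$ holomorphic in an open neighborhood of $\{\Re(s)\geq h_\star\}$.

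Choosing $\chi\geq 0$ (the case needed for the application in \S\ref{sec:proofthm}) makes all coefficients of $f$ nonnegative, and Delange's theorem then yields
$$N_\pm(n,\chi,t) \sim \frac{c_\pm(\chi)^n / n}{(n-1)!\,h_\star}\,t^{n-1}e^{h_\star t} = \frac{(c_\pm(\chi) t)^n}{n!}\frac{e^{h_\star t}}{h_\star t}.$$
The hardest points should be (i) the sign bookkeeping in the super flat trace needed to pin down the leading residue as $+c_\pm(\chi)^n$, which leans essentially on the rank-one structure of $\Pi_{\pm,\partial}$ together with the form degree in which it is concentrated, and (ii) verifying that $f$ has no singularity on $\{\Re(s)=h_\star\}\setminus\{h_\star\}$, which reduces through the decomposition above to the analogous statement for $\tilde \Sc_\pm(s)$, itself a consequence of Parry--Pollicott's analysis of the dynamical zeta function of the Axiom A flow on the trapped set.
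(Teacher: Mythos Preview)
Your argument is correct and in fact more direct than the paper's. The paper does not apply Delange to $f(s)$ itself; instead it introduces the auxiliary weighted counting function
\[
g_{n,\chi}(t) = \sum_{\substack{\gamma \in \Pcal \\ i(\gamma,\gamma_\star)=n}} \ell(\gamma) \sum_{\substack{k\geq 1 \\ k\ell(\gamma)\leq t}} I_{\star,\pm}(\gamma,\chi)^k,
\]
relates its Laplace transform to $-\eta_{n,\chi}'(s)/(ns)$ where $\eta_{n,\chi}(s)=\strf\bigl((\chi\tilde\Sc_\pm(s))^n\bigr)$, applies Delange to get the asymptotics of $g_{n,\chi}$, and then recovers $N_\pm(n,\chi,t)$ by a sandwich: the inequality $g_{n,\chi}(t)\leq t\,N_\pm(n,\chi,t)$ yields the lower bound, while the a priori estimate $N(n,t)\leq Ct^{n-1}\e^{h_\star t}$ from \S\ref{subsubsec:uppernotseparating} is fed into a $\sigma$--dilation trick to close the upper bound. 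Your route avoids this detour by splitting off the non-primitive contributions from the trace formula as the explicit remainder $R(s)$, which you correctly show is absolutely convergent on $\{\Re(s)>h_\star/2\}$; this exposes the pole of $f$ at $h_\star$ directly and allows a single application of Delange. Both arguments rest on the same two inputs you isolate---the rank-one structure of $\Pi_{\pm,\partial}$ (for which no sign bookkeeping is actually needed: since $\chi\Pi_{\pm,\partial}$ has finite rank, the flat trace is the ordinary trace and $\strf((\chi\Pi_{\pm,\partial})^n)=c_\pm(\chi)^n$ follows from rank one alone) and the absence of other singularities on $\{\Re(s)=h_\star\}$, which the paper, like you, traces back to Parry--Pollicott. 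Your assumption $\chi\geq 0$ is equally needed in the paper's version (for $g_{n,\chi}$ to be nondecreasing) and is the only case used downstream.
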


\begin{proof}
Because $\chi \Pi_{\pm, \partial}$ is of rank one, it follows that $\strf((\chi \Pi_{\pm,\partial})^n) = c_\pm(\chi)^n$ for any $n \geqslant 1$ (since the flat trace of finite rank operator coincide with its usual trace) and thus
$$
\strf((\chi\tilde \Sc_\pm(s))^n) = \frac{c_\pm(\chi)^n}{(s-h_\star)^n} + \mathcal{O}((s-h_\star)^{-n+1}), \quad s \to h_\star.
$$
We set $\eta_{n,\chi}(s) = \strf((\chi\tilde \Sc_\pm(s))^n),$ and 
$$
g_{n,\chi}(t) = \sum_{\substack{\gamma \in \mathcal{P} \\ i(\gamma, \gamma_\star) = n}} \ell(\gamma) \sum_{\substack{k \geqslant 1 \\ k\ell(\gamma)\leqslant t}} I_{\star, \pm}(\gamma, \chi)^k, \quad t \geqslant 0,
$$
Now if $G_{n,\chi}(s) = \displaystyle{\int_0}^{+\infty}g_{n,\chi} (t) \e^{-ts} \dd t$, a simple computation leads to
$$
G_{n,\chi}(s) = \frac{1}{s} \sum_{i(\gamma, \gamma_\star) = n} \ell^\#(\gamma) \e^{-s \ell(\gamma)} I_{\star, \pm}(\gamma, \chi)^{\ell(\gamma) / \ell^\#(\gamma)} = -\frac{\eta_{n,\chi}'(s)}{ns},
$$
where the last equality comes from Proposition \ref{prop:computeflattrace}. Because one has the expansion $\eta_{n,\chi}'(s) = -nc_\pm(\chi)^n(s-h_\star)^{-(n+1)} + \mathcal{O}((s-h_\star)^{-n})$ as $s \to h_\star$, we obtain
$$
G_{n,\chi}(h_\star s) = \frac{c_\pm(\chi)^n}{h_\star^{n+2}(s-1)^{n+1}} + \mathcal{O}((s-h_\star)^{-n}), \quad s \to h_\star.
$$
Then applying the Tauberian theorem of Delange \cite[Th\'eor\`eme III]{delange1954generalisation}, we have
$$
\frac{1}{h_\star}g_{n,\chi}(t/h_\star) \sim \frac{c_\pm(\chi)^n}{h_\star^{n+2}} \frac{\e^{t}}{n!}t^{n}, \quad t \to +\infty,
$$
which reads 
\begin{equation}\label{eq:fn}
\displaystyle{g_{n,\chi}(t) \sim \frac{(c_\pm(\chi) t)^n}{n! h_\star} \exp(h_\star t).}
\end{equation}
Now note that, if $\Pcal_n$ is the set of primitive closed geodesics $\gamma$ with $i(\gamma, \gamma_\star) = n$ one has
$$
g_{n,\chi} (t) \leqslant \sum_{\substack{\gamma \in \mathcal{P}_n \\ \ell(\gamma) \leqslant t}} \ell(\gamma) \lfloor t/\ell(\gamma)\rfloor I_\gamma(\chi) \leqslant t N(n, \chi, t).
$$
As a consequence we have 
\begin{equation}\label{eq:lower}
\liminf_{t \to +\infty} N_\pm(n, \chi, t) \frac{n! h_\star t}{(c_\pm(\chi) t)^n\e^{h_\star t}}  \geqslant 1.
\end{equation}
For the other bound, we use the a priori bound obtained in \S\ref{subsubsec:uppernotseparating}
$$
N_\pm(n, \chi, t) \leqslant N(n, t) \leqslant \frac{Ct^n}{n!} \frac{\e^{h_\star t}}{h_\star t}
$$
to deduce that for any $\sigma > 1$
\begin{equation}\label{eq:t/sigma}
\limsup_{t \to +\infty} N_\pm(n, \chi, t/\sigma) \frac{n!}{t^n} \frac{h_\star t}{\e^{h_\star t}} = 0.
\end{equation}
Now we may write
$$
\begin{aligned}
N_\pm(n, \chi, t) &= N_\pm(n, \chi, t/\sigma) + \sum_{\substack{\gamma \in \Pcal \\ i(\gamma_\star, \gamma) = n \\ t/\sigma \leqslant \ell(\gamma) \leqslant t}} I_{\star,\pm}(\gamma, \chi)  \\
&\leqslant N_\pm(n, \chi, t/\sigma) + \frac{\sigma}{t} \sum_{\substack{\gamma \in \Pcal \\ i(\gamma_\star, \gamma) = n \\ t/\sigma  \leqslant \ell(\gamma) \leqslant t}} I_{\star, \pm}(\gamma, \chi) \ell(\gamma) \\
& \leqslant N_\pm(n, \chi, t/\sigma) + \frac{\sigma}{t} g_{n, \chi}(t),
\end{aligned}
$$
which gives with (\ref{eq:t/sigma})
$$
\limsup_{t \to +\infty} N_\pm(n, \chi, t) \frac{n!}{(c_\pm(\chi) t)^n} \frac{h_\star t}{\e^{h_\star t}} \leqslant \sigma.
$$
As $\sigma > 1$ is arbritrary, the Lemma is proven.
\end{proof}

\subsection{The case $\gamma_\star$ is separating}

In that case, $\Sigma_\delta$ consists of two surfaces $\Sigma_{\delta}^{(1)}$ and $\Sigma_{\delta}^{(2)}$. We write $M_\delta = M_\delta^{(1)} \sqcup M_\delta^{(2)}$ where $M_\delta^{(j)} = S\Sigma_\delta^{(j)}$, $j=1,2,$ and $\partial = \partial^{(1)} \sqcup \partial^{(2)}$ with $\partial^{(j)} \subset M_\delta^{(j)}$. Note that, if
$
\tilde \Sc_\pm^{(j)}(s)
$
denotes the restriction of $\tilde \Sc_\pm(s)$ to $\partial^{(j)},$ we have
$$
\tilde \Sc_\pm^{(1)}(s) : \Omega^\bullet(\partial^{(1)}) \to \mathcal{D}'^\bullet(\partial^{(2)}), \quad \tilde \Sc_\pm^{(2)}(s) : \Omega^\bullet(\partial^{(2)}) \to \mathcal{D}'^\bullet(\partial^{(1)}).
$$
As in \S\ref{subsec:notseparating}, we have 
$$
\tilde \Sc_\pm^{(j)}(s) = Y_\pm^{(j)}(s) + \frac{\Pi_{\pm,\partial}^{(j)}}{s-h_j}, \quad s \to h_j,
$$
where $Y_\pm^{(j)}(s)$ is holomorphic near $s = h_j$ and $h_j$ is the topological entropy of the geodesic flow of $\Sigma_\delta^{(j)}.$ As before we fix $\chi \in C^\infty_c(\partial \setminus \partial_0)$.

\subsubsection{The case $h_1 \neq h_2$}\label{subsubsec:h_1neqh_2}

 We may assume $h_1 > h_2$ and we set $c_\pm(\chi) = \strf\left(\chi \tilde \Sc_\pm^{(2)}(h_1)\chi \Pi_{\pm,\partial}^{(1)}\right).$ Because $\Pi_{\pm,\partial}^{(1)}$ is of rank one, it follows that $\strf\left(\bigl(\chi \tilde \Sc_\pm^{(2)}(h_1)\chi \Pi_{\pm,\partial}^{(1)}\bigr)^n\right) = c_\pm(\chi)^n$ for any $n \geqslant 1$ and thus, by cyclicity of the flat trace (as the flat trace coincide with the real trace for operators of finite rank), we have as $s \to h_1$,
$$
\begin{aligned}
\strf\left((\chi\tilde \Sc_\pm(s))^{2n}\right) &= \strf\left( \bigl(\chi \tilde \Sc_\pm^{(1)}(s)\chi \tilde \Sc_\pm^{(2)}(s)\bigr)^n + \bigl(\chi \tilde \Sc_\pm^{(2)}(s)\chi\tilde \Sc_\pm^{(1)}(s)\bigr)^n\right) \\
&= \frac{2c_\pm(\chi)^n}{(s-h_1)^n} + \mathcal{O}((s-h_1)^{-n+1}).
\end{aligned}
$$
Now we may proceed exactly as in \S\ref{subsec:notseparating} to obtain that, if $c_\pm(\chi) > 0$,
$$
N(2n, \chi, t) \sim \frac{(c_\pm(\chi) t)^n}{n!} \frac{\e^{h_\star t}}{h_\star t}, \quad t \to +\infty.
$$

\subsubsection{The case $h_1 = h_2 = h_\star$}\label{subsubsec:h_1=h_2}

 In that case, by denoting $c_\pm(\chi) = \strf(\Pi_{\pm,\partial}^{(1)}\Pi_{\pm,\partial}^{(2)})$ we have
$$
\strf\left((\chi \tilde \Sc_\pm(s))^{2n}\right) = \frac{2c_\pm(\chi)^n}{(s-h_\star)^{2n}} + \mathcal{O}((s-h_\star)^{-2n +1}), \quad s \to h_\star.
$$
Again, provided that $c_\pm(\chi) \neq 0$, we may proceed exactly as in \S\ref{subsec:notseparating} to obtain
$$
N(2n, \chi, t) \sim 2\frac{(c_\pm(\chi) t^2)^n}{(2n)!}\frac{e^{h_\star t}}{h_\star t}.
$$

\section{Proof of theorem \ref{thm:main}}\label{sec:proofthm}

In this section we prove Theorem \ref{thm:main}. We will apply the asymptotic growth we obtained in the last section to some appropriate sequence of functions in $C^\infty_c(\partial \setminus \partial_0)$. Let $F \in C^\infty(\R, [0,1])$ be an even function such that $F \equiv 0$ on $[-1,1]$ and $F \equiv 1$ on $]-\infty, -2] \cup [2, +\infty[$. For any small $\eta  >0$, set in the coordinates from Lemma \ref{lem:coordinates}
$$
\chi_\eta(z) = F(\theta / \eta), \quad z = (\tau, 0, \theta) \in \partial.
$$
Then $\chi_\eta \in C^\infty_c(\partial \setminus \partial_0)$ for any $\eta > 0$ small. The function $\chi_\eta$  forgets about the trajectories passing at distance not greater than $\eta$ from the "glancing" $S\gamma_\star$.

\subsection{The case $\gamma_\star$ is not separating} \label{subsec:notseparating2}
Recall from \S\ref{sec:apriori} that we have the \textit{a priori} bounds
\begin{equation}\label{eq:aprioriproof}
C^{-1} \frac{\e^{h_\star L}}{h_\star L} \leqslant N(1, L) \leqslant C \e^{h_\star L}
\end{equation}
for $L$ large enough. This estimate implies the following fact\footnote{Indeed, if it does not hold, then there is $\varepsilon > 0$ such that for any $L_0 > 0$ there is $L_1$ such that for any $n \geqslant 0$, it holds
$$
\varepsilon < \frac{N(1, L_1 + nL_0)}{N(1, L_1 + (n+1)L_0)},
$$
which gives
$
N(1, L_1 + (n+1) L_0) \varepsilon^n < N(1, L_1)
$
for each $n.$ As $L_0$ can be chosen arbitrarily, we see that (\ref{eq:aprioriproof}) cannot hold.}
:
$$
\forall \varepsilon > 0, \quad \exists L_0 > 0, \quad \forall L_1 > 0, \quad \exists L > L_1, \quad N(1, L-L_0) \leqslant \varepsilon N(1, L).
$$
In particular, we see with Lemma \ref{lem:estangle} that for any $\eta > 0$ small enough, one has
\begin{equation}\label{eq:estangle2}
\liminf_{L \to +\infty} \frac{N(1, \eta, L)}{N(1, L)} \leqslant \frac{1}{2},
\end{equation}
where $N(1, \eta, L)$ is defined in \S\ref{subsec:aprioriangles}.

For $\eta > 0$ small and $L > 0$, neither $c_{\pm}(\chi_\eta)$ nor $N_\pm(n, \chi_\eta, L)$ (see \S\ref{subsec:notseparating}) depend on $\pm$, since $F$ is an even function. We denote them simply by $c(\eta)$ and $N(n, \chi_\eta, L)$ respectively. We claim that $c(\eta) > 0$ if $\eta  > 0$ is small enough. Indeed, reproducing the arguments from \S\ref{subsec:notseparating} we see that $c(\eta) = 0$ implies 
\begin{equation}\label{eq:ll}
N(1, \chi_\eta, L) \ll \exp(h_\star L)/h_\star L, \quad L \to +\infty.
\end{equation}
On the other hand we have $N(1, L) = N(1, \chi_\eta, L) + R(\eta, L)$ with
$$
R(\eta, L) = N(1,L) - N(1, \chi_\eta, L) \leqslant N(1, 2\eta, L),
$$
and thus, if $\eta$ is small enough, (\ref{eq:estangle2}) gives
$$
\limsup_{L \to +\infty} \frac{N(1, \chi_\eta, L)}{N(1, L)} \geqslant \frac{1}{2},
$$
Since $C^{-1} \exp(h_\star L)/(h_\star L) \leqslant N(1, L)$, we obtain that (\ref{eq:ll}) cannot hold, and thus $c(\eta) > 0.$

In particular we can apply Lemma \ref{lem:estnotseparating} to get $\displaystyle{\lim_{L} N(n, \chi_\eta, L) \frac{n!}{(c(\eta) L)^n}\frac{h_\star L}{\e^{h_\star L}} = 1}$. As $N(n,L) \geqslant N(n, \chi_\eta, L)$ we obtain that for $L$ large enough
$$
C^{-1}\frac{L^n}{n !} \frac{\e^{h_\star L}}{h_\star L} \leqslant N(n, L) \leqslant C\frac{L^n}{n !} \frac{\e^{h_\star L}}{h_\star L}
$$
(the upper bound comes from \S\ref{subsubsec:uppernotseparating}).
Let $\varepsilon > 0$. The last estimate combined with Lemma \ref{lem:estangle} implies that for $\eta > 0$ small enough, one has
$$
\limsup_L R(n, \eta, L) \frac{n!}{L^n}\frac{h_\star L}{\e^{h_\star L}} < \varepsilon,
$$
where $R(n, \eta, L) = N(n, L) - N(n, \chi_\eta, L).$ Thus writing $N(n, \chi_\eta, L) \leqslant N(n,L) \leqslant N(n, \chi_\eta, L) + R(n, \eta, L)$ we obtain
$$
c(\eta)^n \leqslant \liminf_L N(n,L) \frac{n!}{L^n}\frac{h_\star L}{\e^{h_\star L}}  \leqslant \limsup_L N(n,L) \frac{n!}{L^n}\frac{h_\star L}{\e^{h_\star L}} \leqslant c(\eta)^n + \varepsilon
$$
for any $\eta$ small enough. As $\varepsilon > 0$ is arbitrary, we finally get
$$
N(n,L) \sim \frac{(cL)^n}{n !} \frac{\e^{h_\star L}}{h_\star L}, \quad L \to +\infty
$$
where $c_\star = \lim_{\eta \to 0} c(\eta) < +\infty$ (the limit exists as $\eta \mapsto c(\eta)$ is nondecreasing and bounded by above). \newline

\subsection{The case $\gamma_\star$ is separating}

\subsubsection{The case $h_1 \neq h_2$} In that case recall from \S\ref{sec:apriori} that we have the bound
$$
C^{-1} \e^{h_\star L} \leqslant N(2,L) \leqslant C \e^{h_\star L}
$$
for $L$ large enough. In particular, using Lemma \ref{lem:estangle} and \S\ref{subsubsec:h_1neqh_2} we may proceed exactly as in \S\ref{subsec:notseparating2} to obtain
$$
N(2,L) \sim \frac{(c_\star L)^n}{n!} \frac{\e^{h_\star L}}{h_\star L}, \quad L \to +\infty
$$
where $c_\star = \lim_{\eta \to 0} c_{\pm}(\chi_\eta)$.

\subsubsection{The case $h_1 = h_2 = h$} In that case recall from \S\ref{sec:apriori} that we have the bound
$$
C^{-1} L \e^{h_\star L} \leqslant N(2,L) \leqslant C L\e^{h_\star L}
$$
for $L$ large enough. In particular, using Lemma \ref{lem:estangle} and \S\ref{subsubsec:h_1=h_2} we may proceed exactly as in \S\ref{subsec:notseparating2} to obtain
$$
N(2,L) \sim 2\frac{(c_\star L)^n}{(2n)!} \frac{\e^{h_\star L}}{h_\star L}, \quad L \to +\infty
$$
where $c_\star = \lim_{\eta \to 0} c_{\pm}(\chi_\eta)$.

\section{A Bowen-Margulis type measure}\label{sec:bowen}

\subsection{Description of the constant $c_\star$}
In this subsection we describe the constant $c_\star$ in terms of Pollicott-Ruelle resonant states of the open system $(M_\delta, \varphi_t)$, assuming for simplicity that $\gamma_\star$ is not separating. By \S\ref{subsec:resolv} we may write, since $\Pi_{\pm, \delta}(h_\star)$ is of rank one by \S\ref{subsec:notseparating},
$$
\Pi_{\pm,\delta}(h_\star)|_{\Omega^1(M_\delta)} = u_\pm \otimes (\alpha \wedge s_\mp), \quad u_\pm, \in \mathcal{D}'^1_{E_{\pm,\delta}^*}(M_\delta), \quad s_\mp \in \mathcal{D}'^1_{E_{\mp, \delta}^*}(M_\delta),
$$
with $\supp(u_\pm, s_\pm) \subset \Gamma_{\pm, \delta}$ and $u_\pm, s_\mp \in \ker(\iota_X).$ Using the Guillemin trace formula \cite{guillemin1977lectures} and the Ruelle zeta function $\zeta_{\Sigma_\star}$, we see that the Bowen-Margulis measure $\mu_0$ (see \cite{bowen1972equidistribution})  of the open system $(M_\delta, \varphi_t)$, which is given by Bowen's formula
$$
\mu_0(f) = \lim_{L \to +\infty} \sum_{\substack{\gamma \in \Pcal_\delta \\ \ell(\gamma) \leqslant L}} \frac{1}{\ell(\gamma)} \int_0^{\ell(\gamma)} f(\gamma(\tau), \dot \gamma(\tau)) \dd \tau, \quad f \in C^\infty_c(M_\delta),
$$
coincides with the measure
$
\displaystyle{f \mapsto \strf(f \Pi_{\pm,\delta}(h)) = \int_{M_\delta} f ~u_\pm \wedge \alpha \wedge s_\mp}.
$
Note that $\supp(u_\pm \wedge \alpha \wedge s_\mp) \subset K_\star$, where $K_\star \subset S\Sigma_\star$ is the trapped set. On the other hand we have by definition of $\Pi_{\pm, \partial}$,
$$
c_\star = \lim_{\eta \to 0}\strf(\chi_\eta \Pi_{\pm, \partial}) = -\lim_{\eta \to 0} \int_\partial \chi_\eta \psi^* \iota^*u_\pm \wedge \iota^*s_\mp.
$$

\subsection{A Bowen-Margulis type measure}
In what follows we set
$
S_{\gamma_\star} \Sigma = \{(x,v) \in S\Sigma,~x \in \gamma_\star\}
$
and for any primitive geodesic $\gamma : \R/\Z \to \Sigma$, 
$$
I_{\star}(\gamma) = \{z \in S_{\gamma_\star} \Sigma, ~z = (\gamma(\tau), \dot \gamma(\tau)) \text{ for some }\tau\}.
$$
For any $n \geqslant 1$ we define the set $\Gamma_n \subset S_{\gamma_\star} \Sigma$ by
$$
\complement \Gamma_n = \{z \in S_{\gamma_\star} \Sigma, ~\tilde S_{\pm}^k(z) \text{ is well defined for }k=1, \dots, n\}.
$$
Also we set $\ell_n(z) = \max(\ell_{+,n}(z), \ell_{-,n}(z))$ where
$$
\ell_{\pm,n}(z) =  \ell_\pm(z) +  \ell_\pm(\tilde S_\pm(z)) + \cdots + \ell_\pm({\tilde S_{\pm}^{n-1}(z)}), \quad z \in \complement \Gamma_n,
$$
where $\ell_\pm(z) = \inf\{t > 0,~\varphi_{\pm t}(z) \in S_{\gamma_\star}\Sigma\}$.

We will now prove Theorem \ref{thm:equidistribution} which says that for any $f \in C^\infty(S_{\gamma_\star}\Sigma)$ the limit
\begin{equation}\label{eq:bowen}
\mu_n(f) = \lim_{L \to +\infty} \frac{1}{N(n,L)} \sum_{\gamma \in \Pcal_n} \frac{1}{n}\sum_{z \in I_{\star}(\gamma)} f(z)
\end{equation}
exists and defines a probability measure $\mu_n$ on $S_{\gamma_\star}\Sigma$ supported in $\Gamma_n$. We will also prove that, in the separating case,
$$
\mu_n(f) = c_\star^{-n} \lim_{\eta \to 0} \strf(f(\chi_\eta \Pi_{\pm, \partial})^n),
$$
where $c_\star > 0$ is the constant appearing in Theorem \ref{thm:main}. Note that here we identify $f$ as its lift $p_\star^* f$ which is a function on $\partial$, so that the above formula makes sense (recall that $p_\star : S\Sigma_\star \to S\Sigma$ is the natural projection which identifies both components of $\partial S\Sigma_\star = \partial$).
We have of course such a formula in the non separating case but we omit it here.

\begin{proof}[Proof of Theorem \ref{thm:equidistribution}]
Let $f \in C^\infty(S_{\gamma_\star} \Sigma)$. Then reproducing the arguments in the proof of Proposition \ref{prop:computeflattrace}, we get for $\Re(s)$ big enough,
$$
\strf\left(f (\chi_\eta\tilde \Sc_\pm(s))^n\right) = \sum_{i(\gamma, \gamma_\star) = n} \left(\sum_{z \in I_{\star}(\gamma)} f(z)\right) \e^{-s \ell(\gamma)} I_{\star}(\gamma, \chi_\eta),
$$
where $\chi_\eta$ is defined in \S\ref{sec:proofthm} and $I_\star(\gamma, \chi_\eta) = I_{\star, \pm}(\gamma, \chi_\eta)$ (see \S\ref{sec:tauberian}; this does not depend on $\pm$ as the function $F$ used to construct $\chi_\eta$ is even).
Now we may proceed exactly as in \S\ref{sec:tauberian}, replacing $g_{n,\chi}(t)$ by
$$
g_{n, \chi_\eta, f}(t) = \sum_{\substack{\gamma \in \Pcal \\ i(\gamma, \gamma_\star) = n}} \left(\sum_{z \in I_{\star}(\gamma)} f(z)\right) \sum_{\substack{k \geqslant 1 \\ k\ell(\gamma) \leqslant t}} I_\star(\gamma, \chi_\eta), \quad t \geqslant 0,
$$
to obtain that the limit (\ref{eq:bowen}) exists, and is equal to $\lim_{\eta \to 0} c_\star^{-n}\mathrm{Res}_{s=h_\star} \strf(f(\chi_\eta \tilde \Sc_\pm(s))^n)$ provided $\gamma_\star$ is separating. Finally, if $f \in C^\infty_c(S_{\gamma_\star}\Sigma \setminus \Gamma_n)$ then there is $L > 0$ such that
$$
\ell_n(z) \leqslant L, \quad z \in \supp(f).
$$
In particular for any $\gamma \in \Pcal$ such that $i(\gamma, \gamma_\star) = n$ and $\ell(\gamma) > L$, we have $f(z) = 0$ for any $z \in I_\star(\gamma)$. This shows that $\mu_n(f) = 0$ and the support condition for $\mu_n$ follows.
\end{proof}

\section{Application to geodesic billards}\label{sec:billard}
We prove here Corollary \ref{thm:3}. Take $(\Sigma', g')$ a compact oriented negatively curved surface with totally geodesic boundary $\partial \Sigma'.$ We can double the surface to obtain a closed surface $\Sigma,g)$, and the doubled metric $g$ which is smooth outside $\partial \Sigma$ (it is of class $C^{3-\varepsilon}$ near $\partial \Sigma'$ for every $\varepsilon > 0$). However the geodesic flow on $(\Sigma, g)$ remains $C^1$ and Anosov, and one can see that the construction of the scattering operator
$$
\Sc_\pm(s) : \Omega^\bullet(\partial) \to \mathcal{D}'^\bullet(\partial), \quad \partial = \{(x,v),~x \in \partial \Sigma'\} \subset S \Sigma
$$
is still valid in this context\footnote{Indeed we may embed $\Sigma$ into a slightly larger smooth surface $\Sigma_\delta$ with strictly convex boundary to prove (exactly as before) that the scattering operator $\Sc_\pm(s)$ (which does not depend on the extension !) extends meromorphically to the whole complex plane.}, as well as the considerations on its wavefront set. Now $\partial \Sigma$ is a disjoint union of closed geodesics $\gamma_{\star, 1}, \dots, \gamma_{\star,r}$, and the two open surfaces $\Sigma', \Sigma''$ which are the connected components of $\Sigma \setminus \partial \Sigma'$ are smooth and have same entropy. Now, instead of $\tilde \Sc_\pm(s) = \psi^* \circ \Sc_\pm(s)$, consider
$$
\hat \Sc_\pm(s) = R^* \circ \Sc_\pm(s), 
$$
where $R : \partial \to \partial$ is the reflexion according to the Fresnel-Descartes' law. Note that although the geodesic flow is only $C^1$, the operator $\hat \Sc_\pm(s)$ is a weighted version of the transfer operator of the map $z \mapsto R(S_\mp(z))$, which is smooth where it is defined. Thus as in \S\ref{sec:scat}\footnote{We can check the needed wavefront properties by using the fact that the geodesic flow of the doubled surface is still Anosov, as in \S\ref{sec:scat}.}, for any $\chi \in C^\infty_c(\partial \setminus \partial_0)$, we have the trace formula
$$
\strf \left((\chi \hat \Sc_\pm(s))^n\right) = 2n \sum_{i(\gamma) = n} \frac{\ell^\#(\gamma)}{\ell(\gamma)} \e^{-s\ell(\gamma)} \prod_{z \in B(\gamma)}\chi(z), 
$$
but here the sum runs over all closed oriented billard trajectories of $\Sigma'$ with $n$ rebounds (here we have a factor $2$ since we count each trajectory twice as the manifold is doubled), and $B(\gamma)$ is the set of inward pointing vectors in $\partial$ given by the rebounds of $\gamma$. Moreover it is clear that, to each oriented periodic billiard trajectory of $\Sigma'$ with two rebounds, correspond exactly two closed geodesics of $\Sigma$ intersecting exactly twice $\partial \Sigma'.$ The methods given in \S\ref{sec:apriori} that led to an priori bound on the number of closed geodesics intersecting exactly two times $\gamma_\star$ extends in the context of the multicurve $(\gamma_{\star,1}, \dots, \gamma_{\star,r})$ given by $\partial \Sigma'$, for example by choosing a point $x_\star \in \gamma_{\star, 1}$ and composing elements of $\pi_1(\Sigma', x_\star)$ with elements of $\pi_1(\Sigma \setminus \Sigma', x_\star)$ as in \S\ref{sec:apriori}.  Thus we get an a priori lower bound for the number of closed billiard trajectories with two rebounds and as in \S\ref{sec:tauberian} the order of the pole of $\strf \left( (\chi_\eta \hat \Sc_\pm(s))^2\right)$ at $s=h'$ (the entropy of the open system ($\Sigma', g$)) is exactly two for small $\eta$, which implies that the pole of $\strf \left((\chi_\eta \hat \Sc_\pm(s))^n\right)$ is exactly $n$ for every $n$ (as the residue of $\hat \Sc_\pm(s)$ at $s=h$ is of rank one). Thus reproducing the arguments of \S\ref{sec:proofthm} we get Corollary \ref{thm:3}.

\section{A large deviation result}\label{sec:deviation}
The goal of this last section, which is independent of the rest of this paper, is to prove the following result, which is a consequence of a classical large deviation result by Kifer \cite{kifer1994large}. 

\begin{prop}\label{lem:deviation}
There exists $I_\star > 0$ such that the following holds. For any $\varepsilon > 0$, there is $C, \delta > 0$ such that for large $L$
$$
\frac{1}{N(L)} \# \left\{\gamma \in \Pcal,~\left| \frac{i(\gamma, \gamma_\star)}{\ell(\gamma)} - I_\star\right| \geqslant \varepsilon \right\} \leqslant C \exp(-\delta L).
$$
\end{prop}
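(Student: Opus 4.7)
The plan is to realize $i(\gamma, \gamma_\star)$ as (approximately) a Birkhoff integral of a continuous observable on $S\Sigma$ along the closed orbit $\gamma$, and then invoke Kifer's large deviation theorem for Axiom A flows applied to the geodesic flow on $S\Sigma$ (which is Anosov since $\Sigma$ is negatively curved).

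In the tubular coordinates $(\tau, \rho, \theta)$ of Lemma \ref{lem:coordinates}, fix a nonnegative $\phi \in C^\infty_c((-1,1))$ with $\int \phi = 1$, and for small $\eta > 0$ define the continuous function
$$F_\eta(\tau, \rho, \theta) = \tfrac{1}{\eta}\,\phi(\rho/\eta)\, |X\rho|,$$
extended by zero outside the tubular neighborhood (note that $|X\rho|$ coincides with $|\sin\theta|$ on $\{\rho=0\}$ by Lemma \ref{lem:coordinates}). Since $d\rho/dt = X\rho$ along any geodesic, the change of variables $u = \rho$ shows that each transversal crossing of $\gamma_\star$ by $\gamma$ contributes exactly $1$ to $\int_0^{\ell(\gamma)} F_\eta \circ \varphi_t(z_\gamma)\,dt$, while each \emph{near-tangent passage} (where $\gamma$ enters $\{|\rho| < \eta\}$ and exits on the same side without crossing $\gamma_\star$) contributes a value in $[0,2]$. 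By the convexity $\pm X^2 \rho > 0$ on $\{\pm \rho > 0\}$ used in the proof of Proposition \ref{prop:scatresolv}, such passages occur only when the incoming direction satisfies $|\sin\theta|^2 \leqslant C\eta$, so the total excess is bounded by $\int_0^{\ell(\gamma)} G_\eta \circ \varphi_t(z_\gamma)\,dt$ for an explicit nonnegative continuous $G_\eta$ supported in $\{|\rho| < \eta,\ |\sin\theta|^2 \leqslant C\eta\}$ with $\int G_\eta\,d\mu_0 \to 0$ as $\eta \to 0$, where $\mu_0$ denotes the Bowen-Margulis measure on $S\Sigma$.

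Next I invoke Kifer's theorem \cite{kifer1994large}: for any continuous $F : S\Sigma \to \R$ and tolerance $\eta' > 0$ there exist $C', \delta > 0$ such that
$$\#\!\left\{\gamma \in \mathcal{P} : \ell(\gamma) \leqslant L,\ \left|\tfrac{1}{\ell(\gamma)}\!\int_0^{\ell(\gamma)} \!\!F \circ \varphi_t(z_\gamma)\,dt - \!\int F\,d\mu_0\right| > \eta'\right\} \leqslant C' e^{(h-\delta) L},$$
which combined with Margulis' $N(L) \sim e^{hL}/(hL)$ gives exponential decay of the ratio. Setting
$$I_\star := \lim_{\eta \to 0} \int F_\eta\,d\mu_0,$$
which exists and is strictly positive by the continuous transverse disintegration of $\mu_0$ along the smooth section $\{\rho = 0\}$ together with the fact that $\mu_0$ has full support, one proceeds: given $\varepsilon > 0$ from the statement, choose $\eta = \eta(\varepsilon)$ small enough that $|\!\int F_\eta\,d\mu_0 - I_\star| < \varepsilon/3$ and $\int G_\eta\,d\mu_0 < \varepsilon/3$, then apply Kifer's bound to both $F_\eta$ and $G_\eta$ with tolerance $\varepsilon/3$, and combine via a triangle inequality on the decomposition $i(\gamma,\gamma_\star)/\ell(\gamma) = \tfrac{1}{\ell(\gamma)}\int F_\eta\circ\varphi_t\,dt + \mathrm{err}$ with $|\mathrm{err}| \leqslant \tfrac{1}{\ell(\gamma)}\int G_\eta\circ\varphi_t\,dt$.

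The main obstacle is the interplay between the tube parameter $\eta$ and Kifer's constants $C', \delta$, which depend implicitly on $\|F_\eta\|_\infty \sim \eta^{-1}$: one must verify that $\delta$ remains strictly positive for the chosen $\eta = \eta(\varepsilon)$. This is standard since Kifer's LDP rate function is lower semi-continuous on the (weak-$\ast$ compact) space of invariant probability measures and strictly negative away from the Bowen-Margulis mean, but it requires careful bookkeeping. A secondary subtlety is the existence and positivity of $I_\star$, which reduces to the nontriviality of the disintegration of $\mu_0$ along $\{\rho=0\}$ weighted by $|\sin\theta|$; this is a standard property of the measure of maximal entropy of an Anosov flow given that $\{\rho = 0\}$ is a smooth hypersurface transverse to $X$ outside the $\mu_0$-negligible set $S\gamma_\star \cap \{\sin\theta = 0\}$.
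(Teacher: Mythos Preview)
Your approach is essentially correct but takes a different, more hands-on route than the paper. The paper bypasses the approximation by continuous observables entirely: it invokes Bonahon's intersection form $i : \Mcal_\varphi(S\Sigma) \times \Mcal_\varphi(S\Sigma) \to \R_+$, which is \emph{continuous} in the weak-$*$ topology and satisfies $i(\delta_\gamma, \delta_{\gamma_\star}) = i(\gamma, \gamma_\star)$. Then $\nu \mapsto i(\nu, \delta_{\gamma_\star})$ is already a continuous functional on invariant probability measures, so the set $F_\varepsilon = \{\nu : |i(\nu, \delta_{\gamma_\star}) - i(\bar m, \delta_{\gamma_\star})| \geqslant \varepsilon\}$ is closed and excludes $\bar m$, and Kifer's measure-level large deviation principle gives the result in one line, with $I_\star = i(\bar m, \delta_{\gamma_\star}) > 0$ following from the full support of $\bar m$.

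Your route via the approximating observables $F_\eta$ is more elementary in that it avoids Bonahon's theory, but the price is the error analysis, which you treat somewhat loosely. Two remarks. First, your worry about the dependence of Kifer's $\delta$ on $\|F_\eta\|_\infty$ is unfounded: since $\eta = \eta(\varepsilon)$ is \emph{fixed} once $\varepsilon$ is fixed, you apply Kifer to a single continuous observable and get a single $\delta > 0$; there is no limit $\eta \to 0$ taken after Kifer. Second, the claim that the excess is controlled by a continuous $G_\eta$ supported in $\{|\rho| < \eta,\ |\sin\theta|^2 \leqslant C\eta\}$ is morally right but deserves more care: a near-tangent passage on the side $\rho > 0$ has a turning point where $X\rho = 0$, and you should check that the entry angle into the tube is indeed $O(\eta^{1/2})$ using the Riccati-type convexity $X^2\rho > 0$ on $\{\rho > 0\}$ together with an energy estimate over the passage. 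Once this is written out, your argument goes through; it simply trades the black box of Bonahon's continuity for an explicit local computation.
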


In fact, $I_\star = 4i(\bar m, \delta_{\gamma_\star})$ where $i$ is the Bonahon's intersection form \cite{bonahon1986bouts}, $\delta_{\gamma_\star}$ is the Dirac measure on $\gamma_\star$ in and $\bar m$ is the renormalized Bowen-Margulis measure on $M$ (here we see the intersection form as a function on the space of $\varphi$-invariant measures on $S\Sigma$, as described below). Lalley \cite{lalley1996self} showed a similar result for self-intersection numbers; see also \cite{pollicott2006angular} for self intersection numbers with prescribed angles.

\subsection{Bonahon's intersection form}

Let $\Mcal_\varphi(S\Sigma)$ be the set of finite positive measures on $S\Sigma$ invariant by the geodesic flow, endowed with the vague topology. For any closed geodesic $\gamma$, we denote by $\delta_\gamma \in \Mcal_\varphi(S\Sigma)$ the Lebesgue measure of $\gamma$ parametrized by arc length (thus of total mass $\ell(\gamma)$). Let $\mu \in \Mcal_\varphi(S\Sigma)$ be the Liouville measure, that is, the measure associated to the volume form
$
\displaystyle{\frac{1}{2} \alpha \wedge \dd \alpha}.
$

\begin{prop}[Bonahon \cite{bonahon1988geometry}, see also Otal \cite{otal1990spectre}]\label{prop:bonahon}
There exists a continuous function
$$
i : \Mcal_\varphi(S\Sigma) \times \Mcal_\varphi(S\Sigma) \to \R_+
$$
which is additive and positively homogeneous with respect to each variable, such that $i(\mu, \mu) = 2 \pi \vol(\Sigma)$ and 
$$
i(\delta_\gamma, \delta_{\gamma'}) = i(\gamma, \gamma'), \quad 
i(\mu, \delta_\gamma) = 2\ell(\gamma),
$$
for any closed geodesics $\gamma, \gamma'$.
\end{prop}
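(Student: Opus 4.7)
The plan is to realize the intersection form as a product measure applied to a transverse intersection set in the universal cover. First I would identify $\Mcal_\varphi(S\Sigma)$ with the space of $\pi_1(\Sigma)$-invariant Radon measures on the space of oriented geodesics $\widetilde G$ of the universal cover $\widetilde \Sigma$. Concretely, each $\mu \in \Mcal_\varphi(S\Sigma)$ lifts to a $\pi_1(\Sigma)$-invariant measure $\tilde \mu$ on $S \widetilde \Sigma$ which is invariant under the lifted geodesic flow, and hence disintegrates to a $\pi_1(\Sigma)$-invariant measure on $\widetilde G = S\widetilde \Sigma / \R$. The map $\mu \mapsto \tilde \mu$ is a linear homeomorphism for the vague topologies.

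Next I would fix a Borel fundamental domain $D \subset \widetilde \Sigma$ for the $\pi_1(\Sigma)$-action and set
$$
\mathcal{I}_D = \{(g_1, g_2) \in \widetilde G \times \widetilde G,~ g_1 \text{ and } g_2 \text{ meet transversely at a point of } D\}.
$$
Since intersection points of distinct geodesics in negative curvature are unique and transverse, $\mathcal{I}_D$ is a Borel subset with relatively compact closure in $\widetilde G \times \widetilde G$ (for each $p \in \overline D$, the set of geodesics through $p$ is parametrized by a circle). I would then set
$$
i(\mu, \nu) = (\tilde \mu \otimes \tilde \nu)(\mathcal{I}_D),
$$
and check that the $\pi_1$-invariance of $\tilde \mu \otimes \tilde \nu$ together with $\pi_1(\Sigma) \cdot D = \widetilde \Sigma$ makes $i(\mu, \nu)$ independent of the choice of $D$. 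Bilinearity, positive homogeneity and positivity are then built into the definition.

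For the three identities in Proposition \ref{prop:bonahon}, I would compute directly. For closed geodesics $\gamma, \gamma'$, the lift $\tilde \delta_\gamma$ is a countable sum of Lebesgue measures on the $\pi_1(\Sigma)$-orbit of a single lift of $\gamma$; evaluating $\tilde \delta_\gamma \otimes \tilde \delta_{\gamma'}$ on $\mathcal{I}_D$ recovers exactly the minimal number of intersection points in a free homotopy class, i.e. $i(\gamma, \gamma')$. For $i(\mu, \delta_\gamma)$, I would use the fact that the lift of the Liouville measure to $S\widetilde \Sigma$ is, in local coordinates $(x, \theta)$ with $\theta$ the angle, $dA(x) \, d\theta$; integrating the resulting disintegration on $\widetilde G$ against $\tilde \delta_\gamma$ amounts to computing $\int_0^{\ell(\gamma)} \int_0^{2\pi} |\sin\theta| \, d\theta \, dt = 4\ell(\gamma)$, up to the normalization $\tfrac12 \alpha \wedge d\alpha$ which yields $2\ell(\gamma)$. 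The relation $i(\mu, \mu) = 2\pi \vol(\Sigma)$ follows from an analogous Santal\'o-type computation: integrating the density $|\sin\theta|$ over $[0, \pi]^2$ and using that a fundamental domain has area $\vol(\Sigma)$.

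The main obstacle is continuity. Since $\mathcal{I}_D$ is open in $\widetilde G \times \widetilde G$ with relatively compact closure, and its boundary consists of pairs that either meet tangentially or meet on $\partial D$, one needs to verify that for any $\mu_n \to \mu$, $\nu_n \to \nu$ in $\Mcal_\varphi(S\Sigma)$, the boundary of $\mathcal{I}_D$ has zero $(\tilde\mu \otimes \tilde\nu)$-mass. Tangential pairs form a codimension-one submanifold; their mass vanishes because the lift of any $\varphi$-invariant measure is absolutely continuous in the geodesic-flow direction, and the tangency condition is transverse to that direction in $\widetilde G \times \widetilde G$. The contribution from $\partial D$ can be eliminated by choosing $D$ so that $\tilde\mu(\partial D) = \tilde\nu(\partial D) = 0$, which is possible since $\partial D$ varies in a one-parameter family whose members are pairwise disjoint up to null sets. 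Once boundary terms vanish, the portmanteau theorem applied to the product measures yields continuity, and the proposition follows.
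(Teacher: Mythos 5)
The paper does not prove this proposition: it is quoted from Bonahon \cite{bonahon1988geometry} (see also \cite{otal1990spectre}) and used as a black box in \S\ref{sec:deviation}. Your construction is, up to replacing the quotient of the transverse-intersection locus by $\pi_1(\Sigma)$ with a fundamental domain $D$, exactly Bonahon's definition of the intersection form on geodesic currents, and your verifications of the three identities (counting pairs of lifts for $i(\delta_\gamma,\delta_{\gamma'})$, and the Santal\'o-type $|\sin\theta|$ integrals for $i(\mu,\delta_\gamma)$ and $i(\mu,\mu)$) are the standard ones, modulo bookkeeping of the orientation and flip conventions that the paper itself flags in its Remark after the proposition.

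The genuine gap is in the continuity argument, which is precisely the hard part of the cited theorem. In negative curvature two distinct complete geodesics of $\widetilde\Sigma$ are never tangent, so your ``tangential pairs'' are exactly the diagonal and antidiagonal of $\widetilde G\times\widetilde G$ restricted to geodesics meeting $\overline D$. This set is contained in $\partial\mathcal{I}_D$ (rotate one geodesic slightly about a point of $g\cap D$ to see that $(g,g)$ is a limit of transversally intersecting pairs), and it can carry \emph{positive} $\tilde\mu\otimes\tilde\nu$-mass: if $\mu=\nu=\delta_\gamma$, the measure $\tilde\delta_\gamma$ on $\widetilde G$ is a sum of unit atoms on the lifts of $\gamma$, so $(\tilde\delta_\gamma\otimes\tilde\delta_\gamma)(\Delta\cap\partial\mathcal{I}_D)\geqslant 1$. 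Hence the portmanteau theorem does not apply as you invoke it, and the justification you give --- absolute continuity of invariant measures along the flow direction --- is both inapplicable (the flow direction has been quotiented out in $\widetilde G$, where the atoms live) and insufficient. Bonahon's actual proof of continuity handles this by a separate covering argument: one first restricts to pairs intersecting at angle at least $\theta_0$, whose boundary can be made null by a good choice of boxes, and then shows that the product mass of pairs intersecting at angle less than $\theta_0$ is small \emph{uniformly over a neighborhood} of the given pair of currents, using the local product structure of the space of geodesics and the fact that a fixed current assigns small mass to sufficiently thin rectangles away from its atoms. Without an argument of this type your proof only yields lower semicontinuity of $i$, not continuity.
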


\begin{rem}
\begin{enumerate}[label=(\roman*)]
\item Actually, Bonahon's intersection form is a pairing on the space of \textit{geodesic currents}. This space is naturally identified with the space of $\varphi$-invariant measure on $S\Sigma$ which are also invariant by the flip $R : (x,v) \mapsto (x,-v)$.  What we mean here by $i(\nu, \nu')$ for general $\nu, \nu' \in \Mcal_\varphi(S\Sigma)$ is simply $i(\Phi(\nu), \Phi(\nu'))$ where $\Phi : \nu \mapsto \nu + R^*\nu$ (note that $\varphi_t R=R\varphi_{-t}$ for $t \in \R$). 
\item Note that the formulae for $i(\mu, \mu)$ and $i(\mu, \delta_{\gamma})$ differ from \cite{bonahon1988geometry} ; it is due to our convention since here the Liouville measure $\mu$ corresponds to twice the Liouville current considered in \cite{bonahon1988geometry}.
\end{enumerate}
\end{rem}

\subsection{Large deviations}\label{sec:bonahon}

For any $\nu \in \Mcal_\varphi(S\Sigma)$ we denote by $h(\nu)$ the measure-theoretical entropy of $\varphi$ with respect to $\nu$. Then we have the following result.

\begin{prop}[Kifer \cite{kifer1994large}]
Let $F \subset \Mcal_\varphi^1(S\Sigma)$ be a closed set, where $\Mcal_\varphi^1(S\Sigma)$ is the set of $\varphi$-invariant probability measures on $S\Sigma$. Then
$$
\limsup_L  \frac{1}{L}\log \frac{1}{N(L)} \# \{\gamma \in \Pcal,~\delta_\gamma / \ell(\gamma) \in F \} \leqslant \sup_{\nu \in F} h(\nu) - h,
$$
where $h$ is the entropy of the geodesic flow.
\end{prop}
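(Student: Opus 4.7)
The plan is to derive the bound from the thermodynamic formalism for the Anosov geodesic flow on $S\Sigma$ combined with a standard exponential Chebyshev argument. Two classical ingredients enter. First, for every continuous $\varphi \in C(S\Sigma)$ the topological pressure $P(\varphi) = \sup_\nu\bigl(h(\nu) + \int \varphi\, d\nu\bigr)$ admits, thanks to upper semicontinuity of the entropy map (which holds since the flow is Anosov, hence entropy-expansive), the Legendre-dual representation
$$h(\nu) = \inf_{\varphi \in C(S\Sigma)} \Bigl( P(\varphi) - \textstyle\int \varphi \, d\nu\Bigr), \qquad \nu \in \Mcal^1_\varphi(S\Sigma).$$
Second, the weighted prime orbit estimate, following Parry--Pollicott \cite{parry1990zeta}, gives for every continuous $\varphi$ the upper bound
$$\sum_{\gamma \in \Pcal,\, \ell(\gamma) \leqslant L} \exp\Bigl(\textstyle\int_\gamma \varphi\Bigr) \leqslant C\, \e^{P(\varphi)\, L}, \qquad L \geqslant 1,$$
which specialises at $\varphi \equiv 0$ to Margulis's $N(L) \sim \e^{hL}/(hL)$.

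With these tools, fix $\varepsilon > 0$ and set $h_F := \sup_{\nu \in F} h(\nu)$; this is attained, since $F$ is compact in $\Mcal^1_\varphi(S\Sigma)$ and $h$ is upper semicontinuous. For each $\nu \in F$, use the dual identity to pick $\varphi_\nu \in C(S\Sigma)$ with $P(\varphi_\nu) - \int \varphi_\nu\, d\nu < h(\nu) + \varepsilon$; after subtracting the constant $\int \varphi_\nu \, d\nu$, which by the translation property $P(\varphi + c) = P(\varphi) + c$ leaves $P(\varphi_\nu) - \int \varphi_\nu\, d\nu$ unchanged, we may moreover assume $\int \varphi_\nu \, d\nu = 0$, and hence $P(\varphi_\nu) < h_F + \varepsilon$. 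Then
$$V_\nu := \Bigl\{ \mu \in \Mcal^1_\varphi(S\Sigma) \,:\, \textstyle\int \varphi_\nu \, d\mu > -\varepsilon\Bigr\}$$
is an open neighbourhood of $\nu$, and by compactness of $F$ we extract a finite subcover $F \subset V_{\nu_1} \cup \dots \cup V_{\nu_k}$.

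For every $j$ and every $\gamma \in \Pcal$ with $\delta_\gamma/\ell(\gamma) \in V_{\nu_j}$ and $\ell(\gamma) \leqslant L$, we have $\int_\gamma \varphi_{\nu_j} > -\varepsilon \ell(\gamma) \geqslant -\varepsilon L$, hence
$$\#\bigl\{ \gamma \in \Pcal \,:\, \delta_\gamma/\ell(\gamma) \in V_{\nu_j},\, \ell(\gamma) \leqslant L\bigr\} \leqslant \e^{\varepsilon L} \sum_{\ell(\gamma) \leqslant L} \e^{\int_\gamma \varphi_{\nu_j}} \leqslant C\, \e^{(h_F + 2\varepsilon)L}.$$
Summing over $j = 1, \dots, k$ and dividing by $N(L) \sim \e^{hL}/(hL)$ yields
$$\frac{1}{N(L)}\, \#\bigl\{\gamma \in \Pcal : \delta_\gamma/\ell(\gamma) \in F,\, \ell(\gamma) \leqslant L\bigr\} \leqslant C'\, L\, \e^{(h_F - h + 2\varepsilon)L},$$
and taking $\limsup_L \tfrac{1}{L}\log$ produces $h_F - h + 2\varepsilon$; letting $\varepsilon \to 0^+$ closes the argument.

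The main obstacle is not the Chebyshev/covering step above but the verification of the two imported ingredients. The Legendre duality for entropy reduces to a Hahn--Banach separation on the closed convex sublevel sets of the convex continuous functional $P$ on $C(S\Sigma)$, and is classical under upper semicontinuity of $h$. The Parry--Pollicott upper bound itself is obtained from a symbolic Markov coding of the geodesic flow and a spectral analysis of the corresponding transfer operator on the suspension model; both are standard but technically nontrivial prerequisites. With them imported, the statement is just the upper-bound half of a Laplace-type large deviation principle.
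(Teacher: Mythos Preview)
The paper does not give its own proof of this proposition: it is stated with the attribution ``[Kifer \cite{kifer1994large}]'' and quoted as a black box, and the text immediately continues with the \emph{Proof of Lemma \ref{lem:deviation}}, which merely \emph{applies} the proposition. So there is nothing in the paper to compare your argument against.

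That said, your sketch is the standard and correct route to the upper bound of a level-2 large deviation principle for periodic orbits of an Anosov flow: upper semicontinuity of entropy gives the Legendre duality $h(\nu)=\inf_{\varphi}\bigl(P(\varphi)-\int\varphi\,d\nu\bigr)$, the Parry--Pollicott weighted orbit estimate supplies the exponential moment bound, and then the exponential Chebyshev inequality together with a finite open cover of the compact set $F$ by half-spaces $\{\int\varphi_{\nu_j}\,d\mu>-\varepsilon\}$ finishes. The normalisation $\int\varphi_\nu\,d\nu=0$ is harmless by $P(\varphi+c)=P(\varphi)+c$, and the passage from $N(L)\sim \e^{hL}/(hL)$ to the final bound is straightforward. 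This is essentially how Kifer's argument proceeds (via symbolic dynamics for the pressure input), so your proposal is a faithful reconstruction of the cited result rather than an alternative to anything in the present paper.
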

\begin{proof}[Proof of Lemma \ref{lem:deviation}]
We denote by $\bar m \in \Mcal_\varphi^1(S\Sigma)$ the unique probability measure of maximal entropy, that is 
$$
\bar m = \lim_{L \to +\infty} \sum_{\substack{\gamma \in \Pcal \\ \ell(\gamma) \leqslant L}} \frac{\delta_\gamma}{\ell(\gamma)},
$$
where the convergence holds in the weak sense.
Let $\varepsilon > 0$. Define 
$$
F_\varepsilon = \{\nu \in \Mcal_\varphi^1(S\Sigma),~\left| i(\nu, \delta_{\gamma_\star} ) - i(\bar m, \delta_{\gamma_\star}) \right|\geqslant\varepsilon\}.
$$
Then $F_\varepsilon$ is closed and $\bar m \in  \complement F_\varepsilon$ so that
$
\delta = h - \sup_{\nu \in F_\varepsilon} h(\nu) > 0.
$
In particular we obtain for large $L$
$$
\frac{1}{N(L)} \# \{\gamma \in \Pcal,~\delta_\gamma / \ell(\gamma) \in F_\varepsilon\} \leqslant C \exp(-\delta' L)
$$
for some $0 < \delta' < \delta$ and $C > 0$. Now, by Proposition \ref{prop:bonahon}, $\delta_\gamma / \ell(\gamma) \in F_\varepsilon$ is equivalent to $|i(\gamma, \gamma_\star) / \ell(\gamma) - i(\bar m, \delta_{\gamma_\star})| \geqslant \varepsilon.$ Now let $I_\star = i(\bar m, \delta_{\gamma_\star})$. It is a well known fact that $\bar m$ have full support in $S\Sigma$, which implies $I_\star > 0$ by definition of $i(\bar m, \delta_{\gamma_\star})$ (see \cite{otal1990spectre}). This concludes.
\end{proof}

\begin{rem}
\begin{enumerate}[label=(\roman*)]
\item It is not hard to see that Lemma \ref{lem:deviation} implies
$$
\frac{1}{N(L)} \sum_{\ell(\gamma) \leqslant L} i(\gamma, \gamma_\star) \sim I_\star L
$$
as $L \to +\infty$. Thus we recover \cite[Theorem 4]{pollicott1985asymptotic}. 
\item If $(\Sigma, g)$ is hyperbolic then $\bar m$ is the renormalized Liouville measure and we find, with Proposition \ref{prop:bonahon},
$$
I_\star = \frac{\ell(\gamma_\star)}{2 \pi^2 (\grm - 1)}.
$$
\end{enumerate}
\end{rem}

\section{Extension to multi-curves}\label{sec:multi}
In this last section, we explain how the methods used until there allow to derive Theorem \ref{thm:multi}. Let $\gamma_{\star, 1}, \dots, \gamma_{\star, r}$ be pairwise disjoint closed geodesics of $(\Sigma, g)$, and denote by $\Sigma_1, \dots, \Sigma_q$ the connected components of $\Sigma \setminus \bigcup_{i=1}^r \gamma_{\star, i}$. 

\subsection{Notations}

For any $j = 1, \dots, q$, we denote by $h_j > 0$ the topological entropy of the open system $(\Sigma_j, g|_{\Sigma_j})$, and by $B_j$ the set of indexes $i$ such that $\gamma_{\star, i}$ is a boundary component of $\Sigma_j$. We decompose $B_j$ as 
$$
B_j = S_j \sqcup O_j
$$
where $S_j$ is the set of indexes $i$ such that $\gamma_{\star,i}$ lies in $B_{j'}$ for some $j' \neq j$, and $O_j = B_j \setminus S_j$. In fact $S_j$ (resp. $O_j$) is the set of shared (resp. unshared) boundary components of $\Sigma_j$.

For any $\nbf = (n_1, \dots, n_r) \in \N^r$ we define
$$
\langle \nbf, \Sigma_j \rangle = \sum_{i=1}^r n_i \left( \frac{1_{S_j}(i)}{2} + 1_{O_j}(i)\right), \quad j = 1, \dots, q.
$$
This quantity represents the number of times a curve has to travel through $\Sigma_j$ if it intersects $n_i$ times $\gamma_{\star, i}$.

An \textit{admissible} path $(u,v)$ is the collection of two words $u = u_1 \cdots u_n$ and $v= v_1 \cdots v_n$ with $u_\ell \in \{1, \dots, r\}$ and $v_\ell \in \{1, \dots, q\}$ for $\ell = 1,\dots, n$, and with the following property. For any $\ell \in \Z/n\Z$ we have
$
u_{\ell}, u_{\ell + 1} \in B_{v_\ell}
$
and 
$$
v_{\ell} = v_{\ell + 1} \implies u_{\ell + 1} \in O_{v_\ell}.
$$
For any admissible path $\omega = (u,v)$ we denote $\nbf(\omega) = (n_1, \dots, n_r)$ where we set $n_i = \#\{\ell,~u_\ell = i\}.$ An admissible path $\omega$ will be called \textit{primitive} if every non trivial cyclic permutation of $\omega$ is distinct from $\omega$.

An element $\nbf \in \N^r$ will be called admissible if $\nbf = \nbf(\omega)$ for some admissible path $\omega$. For any admissible $\nbf \in \N^r$ we set

$$
h_\nbf = \max\{h_j,~\langle \nbf, \Sigma_j \rangle > 0\} \quad \text{ and } \quad d_\nbf = \sum_{h_j = h_\nbf} \langle \nbf, \Sigma_j\rangle.
$$
The number $h_\nbf$ is the maximum of the entropies encountered by a closed geodesic $\gamma$ satisfying $i(\gamma, \gamma_\star) = n_i$ for $i = 1, \dots, r$, while $d_\nbf$ is the number of times $\gamma$ will travel through a surface $\Sigma_j$ with $h_j = h_\nbf$.

\subsection{Statement}

For any primitive geodesic $\gamma \in \Pcal$ we denote 
$$
\ibf(\gamma, \vec \gamma_\star) = (i(\gamma, \gamma_{\star, 1}), \dots, i(\gamma, \gamma_{\star, r})).
$$
Note that each closed geodesic $\gamma : \R/\Z \to \Sigma$ gives rise to an admissible path $\omega(\gamma)$ (which is unique up to cyclic permutation) defined as follows. Let $(\tau_1, \dots, \tau_n) \in (\R/\Z)^n$ be a cyclically ordered sequence such that $\gamma^{-1}\left(\bigcup_i \gamma_{\star, i}\right) = \{\tau_1, \dots, \tau_n\}$. Then there are words $u_1 \cdots u_n$ and $v_1 \cdots v_n$ such that $\gamma(\tau_\ell) \in \gamma_{\star, u_\ell}$ and $\gamma(\tau) \in \Sigma_{v_\ell}$ for any $\tau \in  ]\tau_\ell, \tau_{\ell + 1}[$ and we set $\omega(\gamma) = (u,v).$ For two paths $\omega,\omega'$, we will write $\omega \sim \omega'$ if $\omega$ is a cyclic permutation of $\omega'$.

\begin{theo}\label{thm:multi2}
Let $\omega$ be an admissible and primitive path. Then there is $c_\omega > 0$ such that for any $k \geqslant 1$
$$
\#\{\gamma \in \Pcal,~\ell(\gamma) \leqslant L,~\omega(\gamma) \sim \omega^k\} \sim d_{\nbf(\omega)}\frac{\displaystyle{\left(c_\omega L^{d_{\nbf(\omega)}}\right)^k}}{(k d_{\nbf(\omega)})!} \frac{\e^{h_{\nbf(\omega)} L}}{h_{\nbf(\omega)} L}
$$
In particular we obtain for any admissible $\nbf \in \N^r$
$$
\#\left\{\gamma \in \Pcal,~\ell(\gamma) \leqslant L,~\mathbf{i}(\gamma, \gamma_\star) = \nbf\right\} \sim C_{\nbf}  L^{d_{\nbf}} \frac{\e^{h_\nbf L}}{h_\nbf L}
$$
where $\displaystyle{C_{\nbf} = d_{\nbf} \sum_{\substack{[\omega] : \nbf(\omega) = \nbf}} c_\omega}$. Here the sum runs over classes $[\omega] = \{\omega',~\omega' \sim \omega\}$.

\end{theo}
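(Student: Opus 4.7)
The plan is to extend the scattering--resolvent machinery of Sections \ref{sec:geo}--\ref{sec:proofthm} from a single curve to the multicurve configuration, tracking in addition which component $\Sigma_j$ each trajectory segment lives in. First I would cut $\Sigma$ along $\gamma_{\star,1} \cup \cdots \cup \gamma_{\star,r}$, thicken each boundary component to obtain a surface $\Sigma_\delta$ with strictly convex boundary, and define on $\partial = \partial(S\Sigma_\star)$ the meromorphic scattering family $\Sc_\pm(s)$ exactly as in Proposition \ref{prop:scatresolv}. The boundary decomposes as $\partial = \bigsqcup_{j,\, i \in B_j} \partial^{(j,i)}$; accordingly $\tilde \Sc_\pm(s) = \psi^* \circ \Sc_\pm(s)$ splits into block pieces $\tilde \Sc_\pm^{(j; i',i)}(s) : \Omega^\bullet(\partial^{(j,i)}) \to \mathcal{D}'^\bullet(\partial^{(j', i')})$ indexed by in/out boundary curves of $\Sigma_j$, with $j' = j$ if $i' \in O_j$ and $j'$ the neighbouring surface if $i' \in S_j$. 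The wavefront analysis of Lemma \ref{lem:composition} extends verbatim to show that any composition of such blocks along an admissible path, cut off by $\chi \in C^\infty_c(\partial \setminus \partial_0)$, is well defined.

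For an admissible path $\omega = (u,v)$ of length $n$, I would form the ordered composition $\mathcal{T}_\omega(\chi, s)$ of the $n$ blocks dictated by $\omega$, each weighted by $\chi$. Reproducing Proposition \ref{prop:computeflattrace}, the flat trace of $\mathcal{T}_\omega(\chi, s)^k$ equals, for $\Re(s) \gg 1$, $kn$ times a sum $\sum_\gamma \ell^\#(\gamma) \ell(\gamma)^{-1} e^{-s\ell(\gamma)} I_\star(\gamma,\chi)$ ranging over closed geodesics (not necessarily primitive) with $\omega(\gamma) \sim \omega^k$, the angle weight $I_\star(\gamma,\chi)$ being the multicurve analogue of the one appearing in Proposition \ref{prop:computeflattrace}. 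Meromorphy of $\mathcal{T}_\omega(\chi, s)$ follows from that of each factor. The pole of $\tilde \Sc_\pm^{(j;\cdot,\cdot)}(s)$ at the leading resonance $s = h_j$ of the open system $\Sigma_j$ is simple with rank-one residue factorising as in \S\ref{sec:bowen}; consequently $\mathcal{T}_\omega(\chi, s)$ has at $s = h_{\nbf(\omega)}$ a pole whose order is the number of factors with $h_{v_\ell} = h_{\nbf(\omega)}$, that is $d_{\nbf(\omega)}$, and $\mathcal{T}_\omega(\chi,s)^k$ has a pole of order $k d_{\nbf(\omega)}$ with leading coefficient built from the chained rank-one residues; this produces the constant $c_\omega$.

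Feeding this pole structure into the Abelian--Tauberian scheme of \S\ref{sec:tauberian}--\S\ref{sec:proofthm}, Delange's theorem \cite{delange1954generalisation} yields the claimed asymptotic, provided that the residue at $s = h_{\nbf(\omega)}$ is non-zero for some $\chi = \chi_\eta$ (defined as in \S\ref{sec:proofthm} to exclude grazing trajectories). To certify this non-vanishing and to control the Tauberian remainder one needs matching a priori bounds of order $L^{d_\nbf - 1} e^{h_\nbf L}$ on the primitive counting function; the upper bound comes from representing each relevant geodesic as a concatenation of $\langle \nbf, \Sigma_j\rangle$ excursions in each $\Sigma_j$ and convolving the Parry--Pollicott growth $\sim e^{h_j L}/L$ in each piece, while the lower bound generalises Proposition \ref{prop:lowerbound2}.

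The main obstacle is this a priori lower bound and the associated combinatorial book-keeping. Extending Lemmas \ref{lem:1}--\ref{lem:2} to multiple curves requires proving that, up to the iterated Dehn-twist actions along each $\gamma_{\star,i}$ captured by the analogue of Lemma \ref{lem:0}, distinct cyclically reduced concatenations of excursion words produce geometrically distinct primitive geodesics with the prescribed itinerary $\omega$. Granted this, together with Kifer's large deviation result to localise the support of typical orbits, one extracts matching two-sided bounds $\asymp L^{d_\nbf - 1} e^{h_\nbf L}$ and concludes, for $k=1$, via the same argument as in \S\ref{sec:proofthm}; the general $k$ follows from the fact that $\mathcal{T}_\omega(\chi,s)^k$ is a $k$-fold composition with the same rank-one leading structure, yielding the factorial denominator $(kd_{\nbf(\omega)})!$ in the Tauberian asymptotic. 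Summing the first formula over admissible primitive classes $[\omega]$ with $\nbf(\omega) = \nbf$ finally gives the second.
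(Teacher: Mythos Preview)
Your outline is essentially the paper's own proof: you build the scattering operator on $\partial(S\Sigma_\star)$, form the ordered composition along $\omega$, compute its flat trace, identify the pole at $h_{\nbf(\omega)}$ of order $d_{\nbf(\omega)}$ via the rank-one residues of each factor, and close with the Delange Tauberian argument once the a priori two-sided bounds are in place. The paper carries this out exactly, using indicator functions $F_i$ on boundary components in place of your block decomposition, and gets the a priori bounds by extending Lemmas~\ref{lem:1}, \ref{lem:2} and \ref{lem:0} to the multicurve setting.

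Two small corrections. First, Kifer's large deviation result plays no role here: it is used only in \S\ref{sec:deviation} for the separate Proposition~\ref{lem:deviation}, and the a priori lower bound in \S\ref{sec:multi} is obtained purely by the combinatorial Dehn-twist argument you describe. Second, the combinatorial factor in the trace formula for $\mathcal T_\omega(\chi,s)^k$ is $|\omega|=n$, not $kn$: since $\omega$ is primitive, $\omega^k$ has exactly $n$ distinct cyclic permutations, and summing the trace over these gives the factor $|\omega|$ on the right of the analogue of \eqref{eq:strf} (compare \eqref{eq:finalmulti}). Neither point affects your overall argument.
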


\subsection{Proof of Theorem \ref{thm:multi2}}

We let $\Sigma_\star = \bigsqcup_{j=1}^q \Sigma_j$ denote the compact surface with geodesic boundary obtained by cutting $\Sigma$ along $\gamma_{\star, 1}, \dots, \gamma_{\star, r}$, and set 
$$
\partial = \{(x,v) \in S\Sigma_\star,~x \in \partial \Sigma_\star\}.
$$
Then the construction of \S\ref{sec:scat} applies perfectly in this context, and we denote by 
$$
\Sc_\pm(s) : \Omega^\bullet(\partial) \to \mathcal{D}'^\bullet(\partial)
$$
the Scattering operator. For any $i = 1, \dots, r$, we let $F_i \in C^\infty(\partial)$ defined by $F_i(z) = 1$ if $\pi(p(z)) \in \gamma_{\star,i}$ and $F_i(z) = 0$ if not. Here we recall that $p_\star : S\Sigma_\star \to S\Sigma$ and $\pi : S\Sigma \to \Sigma$ are the natural projections. Also we denote $\psi : \partial \simeq \partial$ the smooth map which exchanges the connected components of $(\pi \circ p_\star)^{-1}(\gamma_{\star, i})$ via the natural identification, and we set
$$
\tilde \Sc_\pm(s) = \psi^*\Sc_\pm(s).
$$
Let $\omega = (u, v)$ be a primitive admissible word of length $n \geqslant 1$ and $\chi \in C^\infty_c(\partial \setminus \partial_0)$ (recall that $\partial_0 = \cup_{i} p^{-1}(S\gamma_{\star, i})$ is the tangential part of $\partial$). Then set
$$
\tilde \Sc_\pm(\chi, \omega, s) = F_{u_1} \chi \tilde \Sc_\pm^{(v_n)}(s) F_{u_n} \cdots \chi \tilde \Sc_\pm^{(v_1)}(s) F_{u_1} : \Omega^\bullet(\partial_{u_1}) \to \mathcal{D}'^\bullet(\partial_{u_1}).
$$
Here $\tilde S_\pm^{(v_\ell)}$ is the scattering operator associated to the surface $\Sigma_{v_\ell}$ for $\ell = 1, \dots, n$, and $\partial_{u_1} = (\pi \circ p_\star)^{-1}(\gamma_{\star, u_1})$. As in \S\ref{subsec:flattrace}, we find
$$
\strf\left(\tilde \Sc_\pm(\chi, \omega, s)\right) = \sum_{\omega(\gamma) \sim \omega}  \e^{-s \ell(\gamma)} \prod_{z \in I_{\star, \pm}(\gamma)} \chi(z),
$$
where for a closed geodesic $\gamma : \R/\Z \to \Sigma$ we denoted 
$$
I_{\star, \pm}(\gamma) = \{z \in \partial_\pm,~\pi \circ p_\star(z) = \gamma(\tau) \text{ for some }\tau \in \R/\Z\}.
$$
More generally, for $k \geqslant 1$ we have
\begin{equation}\label{eq:finalmulti}
\sum_{\omega'  \sim \omega^k} \strf\left(\tilde \Sc_\pm(\chi, \omega', s)\right) = |\omega| \sum_{\omega(\gamma) \sim \omega^k} \frac{\ell^\#(\gamma)}{\ell(\gamma)} \e^{-s \ell(\gamma)} \left(\prod_{z \in I_{\star, \pm}(\gamma)} \chi(z)\right)^{\ell(\gamma) / \ell^\#(\gamma)}.
\end{equation}
where $|\omega| = n$ is the length of $\omega$, and where the sum runs over all the path that are cyclic permutations of $\omega^k$ (there are $|\omega|$ of them as $\omega$ is primitive).

Note that $\max_\ell \{h_{v_\ell}\} = h_{\nbf(\omega)}$ and
$$
\#\{\ell \in \{1, \dots, n\},~h_{v_\ell} = h_{\nbf(\omega)}\} = d_{\nbf(\omega)}.
$$
Moreover, as in \S\ref{subsec:notseparating}, the following holds. For any $\ell$ such that $h(v_\ell) = h_{\nbf(\omega)}$ we have 
$$
F_{u_{\ell + 1}}\chi \tilde \Sc_\pm^{(v_\ell)}(s) = \frac{F_{u_{\ell + 1}}\chi \tilde \Pi_{\pm, \partial_{v_\ell}} F_{u_\ell}}{s-h_{\nbf(\omega)}}+ \dom_{\Omega^\bullet(\partial_{u_\ell}) \to \mathcal{D}'^\bullet(\partial_{u_{\ell + 1}})}\left(1\right), \quad s \to h_{v_{\nbf(\omega)}},
$$
for some operator $\tilde \Pi_{\pm, \partial_{v_\ell}}$ satisfying that $F_{u_{\ell + 1}}\chi \tilde \Pi_{\pm, \partial_{v_\ell}} F_{u_\ell}$ is of rank one.
$$
\tilde \Sc_\pm(\chi, \omega, s) = \frac{A_\pm(\chi, \omega)}{(s-h_{\nbf(\omega)})^{d_{\nbf(\omega)}}} + \dom_{\Omega^\bullet(\partial_{u_1}) \to \mathcal{D}'^\bullet(\partial_{u_1})}\left((s-h_{\nbf(\omega)})^{1-d_{\nbf(\omega)} }\right), \quad s \to h_{\nbf(\omega)},
$$
for some operator $A_\pm(\chi, \omega) : \Omega^\bullet(\partial_{u_1}) \to \mathcal{D}'^\bullet(\partial_{u_1})$ of rank one. As we obviously have $\tilde \Sc_\pm(\chi, \omega^k, s) = \tilde \Sc_\pm(\chi, \omega, s)^k$ for $k \geqslant 1$, we obtain
$$
\strf\left(\tilde \Sc_\pm(\chi, \omega^k, s)\right) = \frac{c_\pm(\chi, \omega)^k}{(s-h_{\nbf(\omega)})^{k d_{\nbf(\omega)}}} + \dom\left((s-h_{\nbf(\omega)})^{1 - kd_{\nbf(\omega)}}\right), \quad s \to h_{\nbf(\omega)},
$$
where we set $c_\pm(\chi, \omega) = \strf(A_\pm(\chi, \omega)).$ In particular, if we are able to show that for some $C > 0$ we have for $L$ large enough
\begin{equation}\label{eq:apriorimulti}
C^{-1} L^{d_{\nbf(\omega)}-1} \e^{h_{\nbf(\omega)}L} \leqslant \#\{\gamma \in \Pcal,~\ell(\gamma) \leqslant L,~\omega(\gamma) \sim \omega\} \leqslant C L^{d_{\nbf(\omega)}-1} \e^{h_{\nbf(\omega)}L},
\end{equation}
then Theorem \ref{thm:multi} will follow by reproducing the arguments from \S\ref{sec:tauberian},\ref{sec:proofthm} (we also need an estimate on the number of geodesics with $\omega(\gamma) \sim \omega$ intersecting one of the $\gamma_{\star, u_\ell}$ with a small angle as in \S\ref{subsec:aprioriangles}). Those facts may be proven using similar techniques as those presented in \S\ref{sec:apriori}, by writing every $\gamma$ satsisfying $\omega(\gamma) \sim \omega$ as free homotopy classes of elements of the form $w_{1} \cdots w_{n}$ with $w_\ell \in \pi_1(\Sigma_{v_\ell}, x_{v_\ell})$ for some collection of $x_{j} \in \Sigma_{j}$ (the composition is made by using a path linking $x_{v_\ell}$ to $x_{v_{\ell + 1}}$ and passing through $\gamma_{\star, u_{\ell + 1}}$). Indeed, proceeding as in Lemmas \ref{lem:constructgeodesics} and \ref{lem:2}, we obtain that $[w_1 \cdots w_n] = [w_1' \cdots w_n']$ as conjugacy classes in $\pi_1(\Sigma)$ if and only if $w_\ell = (w_{\star, u_\ell, v_\ell})^{-p_\ell} w_\ell' (w_{\star, u_{\ell + 1}, v_{\ell + 1}})^{p_{\ell + 1}}$ for each $\ell$ for some $p_\ell \in \Z$, where $w_{\star, u_\ell, v_\ell}$ is an element of $\pi_1(\Sigma_{v_\ell}, x_{v_\ell})$ representing $\gamma_{\star, u_\ell}$. Now in the same spirit of Lemma \ref{lem:0} one can show that for some $C$, we have for each $\ell$ and $w_\ell'$
$$
\#\{(p,q) \in \Z,~\ell\left(\left[(w_{\star, u_\ell, v_\ell})^{-p} w_\ell' (w_{\star, u_{\ell + 1}, v_{\ell + 1}})^{q}\right]\right) \leqslant L\} \leqslant C(L - \ell(\Ccal_{w_\ell'})-C)^2
$$
where $\ell(\Ccal_{w'_\ell}) = \inf_{p,q}\ell\left(\left[(w_{\star, u_\ell, v_\ell})^{-p} w_\ell' (w_{\star, u_{\ell + 1}, v_{\ell + 1}})^{q}\right]\right)$. Thus by similar computations made in \S\ref{subsec:aprioriseparating} we obtain the lower bound of (\ref{eq:apriorimulti}), by using that 
\begin{equation}\label{eq:roblin}
\#\left\{w_\ell \in \pi_1(\Sigma_{v_\ell}, x_{v_\ell}),~\mathrm{dist}_{\tilde \Sigma_{v_\ell}}(\tilde x_{v_\ell},~w_{\ell} \cdot \tilde x_{v_\ell})\right\} \sim A_\ell \e^{h_{v_\ell} L}
\end{equation}
and $\#\{\ell,~h_{v_\ell} = h_{\nbf(\omega)}\} = d_{\nbf(\omega)}$. Also (\ref{eq:roblin}) gives the upper bound of (\ref{eq:apriorimulti}) (and the desired bound for $\#\{\gamma,~\omega(\gamma) \sim \omega^k,~ \ell(\gamma) \leqslant L\}$, for $k \geqslant 1$). 

Combining (\ref{eq:finalmulti}), (\ref{eq:apriorimulti}) and an appropriate version of Lemma \ref{lem:estangle} (which naturally extends in this context), we obtain Theorem \ref{thm:multi2} by making the support of $1- \chi$ arbitrarily close to $\partial_0$, as in \S\ref{sec:proofthm}, and by setting $c_\omega = \lim_{\supp(1-\chi) \to \partial_0} c_\pm(\chi, \omega).$
\appendix

\section{An elementary fact about pullbacks of distributions}

\begin{lemm}\label{lem:elementary}
Let $K \in \mathcal{D}'(\R^d\times \R^d)$ be a compactly supported distribution. We assume that $\WF(K) \subset \Gamma$ where $\Gamma \subset T^*(\R^d \times \R^d)$ is a closed conical subset such that 
$$
\Gamma \cap N^*\Delta = \emptyset, \quad N^*\Delta = \{(x,\xi, x, -\xi),~(x,\xi) \in T^*\R^d\}.
$$
In particular the pullback $i^*K$, where $i : x \mapsto (x,x)$, is well defined. Then for $N \in \N_{\geqslant 1}$ large enough, the following holds. Let $u \in C^N_c(\R^d)$ and assume that the pullback $i^*(\pi_1^*u K)$ is well defined, where $\pi_1 : (x,x) \mapsto x$ is the projection on the first factor. Then
$$
i^*(\pi_1^*u K) = u (i^* K).
$$
\end{lemm}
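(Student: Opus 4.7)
The proof is a standard regularization argument, so my plan is to first verify the identity when $u$ is smooth and then extend by density to $u \in C^N_c$. Let $m$ denote the order of $K$ as a distribution; I will choose $N$ larger than $m$ and also larger than the local order of the pullback $i^*K$, with $N$ sufficiently large that multiplication by a $C^N_c$ function is continuous on $\mathcal{D}'_\Gamma(\R^{2d})$ endowed with its natural (Hörmander) topology in which $i^* : \mathcal{D}'_\Gamma \to \mathcal{D}'(\R^d)$ is continuous.

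For smooth $u \in C^\infty_c$, the factor $\pi_1^* u$ is smooth, so $\pi_1^* u \cdot K \in \mathcal{D}'_\Gamma$ and its pullback by $i$ exists. To obtain $i^*(\pi_1^* u K) = u \cdot i^*K$ in this smooth-$u$ case, I would approximate $K$ in $\mathcal{D}'_\Gamma$ by a sequence of smooth kernels $K_\eta$ (using sequential density of smooth functions in $\mathcal{D}'_\Gamma$). For $K_\eta$ smooth, both sides agree pointwise with $u(x) K_\eta(x,x)$, and passing $\eta \to 0$ using sequential continuity of $i^*$ on $\mathcal{D}'_\Gamma$ and of multiplication by the smooth function $\pi_1^* u$ gives the identity.

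For $u \in C^N_c$, mollify to obtain $u_\varepsilon = u * \phi_\varepsilon \in C^\infty_c$ with $u_\varepsilon \to u$ in $C^N$. The smooth case yields
\[
i^*(\pi_1^* u_\varepsilon K) = u_\varepsilon \cdot i^*K.
\]
On the right, $u_\varepsilon \cdot i^* K \to u \cdot i^*K$ in $\mathcal{D}'(\R^d)$ because $i^*K$ has finite local order and $u_\varepsilon \to u$ in $C^N$ with $N$ larger than that order. On the left, the convergence $\pi_1^* u_\varepsilon \to \pi_1^* u$ in $C^N(\R^{2d})$ combined with the hypothesis that $i^*(\pi_1^* u K)$ is well-defined (so $\pi_1^* u \cdot K \in \mathcal{D}'_\Gamma$) gives $\pi_1^* u_\varepsilon K \to \pi_1^* u K$ in $\mathcal{D}'_\Gamma$, and then continuity of $i^*$ yields convergence of the left-hand sides. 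Equating limits produces the claim.

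The main obstacle is the technical point that multiplication by $u \in C^N_c$ is continuous on $\mathcal{D}'_\Gamma$ for $N$ sufficiently large: one must verify that the seminorms in the Hörmander topology (involving decay of Fourier transforms of localizations in directions outside $\Gamma$) are preserved under multiplication by a $C^N$ factor that is constant along the fibers of $\pi_1$. Once this is in place, the convergence $u_\varepsilon \to u$ in $C^N$ translates into convergence in $\mathcal{D}'_\Gamma$, and the rest of the argument is routine.
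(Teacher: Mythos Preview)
Your overall strategy—reduce to smooth $u$, then pass to the limit by mollifying $u$—is natural, but the step you identify as the ``main obstacle'' is not a routine technicality: as you have formulated it, it is actually false.

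Multiplication by a $C^N$ function does \emph{not} act continuously on $\mathcal{D}'_\Gamma(\R^{2d})$, nor does it even preserve this space. The seminorms defining the H\"ormander topology on $\mathcal{D}'_\Gamma$ require \emph{rapid} (superpolynomial) decay of $\widehat{\phi K}$ in directions outside $\Gamma$. If $f \in C^N_c$, then $\hat f(\xi)$ decays only like $\langle \xi\rangle^{-N}$, so after convolving Fourier transforms the product $fK$ has at best polynomial decay of order roughly $N$ minus the order of $K$ in those directions—not rapid decay. Concretely, take $K$ smooth (so $\WF(K)=\emptyset$) and $u \in C^N_c \setminus C^\infty$: then $\pi_1^* u \cdot K$ is not smooth, hence has nonempty wavefront set, so it is not in $\mathcal{D}'_\emptyset$. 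Your parenthetical ``so $\pi_1^* u \cdot K \in \mathcal{D}'_\Gamma$'' is therefore an invalid inference from the hypothesis that $i^*(\pi_1^* u K)$ exists; that hypothesis only says $\WF(\pi_1^* u K) \cap N^*\Delta = \emptyset$, not that the wavefront set sits inside $\Gamma$.

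The paper's proof sidesteps $\mathcal{D}'_\Gamma$ entirely at this stage. It approximates $K$ (not $u$) by smooth $K_\varepsilon$ and proves, by splitting a convolution integral and using Peetre's inequality, a \emph{uniform polynomial} bound $|\widehat{K_\varepsilon \pi_1^* u}(q)| \leq C_N \langle q\rangle^{-N'}$ for $q$ in a cone around $N^*\Delta$, with $N'$ growing linearly in $N$. Polynomial decay of sufficiently high order is exactly what is needed for the oscillatory integral defining $i^*(K_\varepsilon \pi_1^* u)$ to converge and to pass to the limit $\varepsilon \to 0$. This is the substantive content you are deferring; once you try to carry out your ``technical point'' correctly, you will end up reproducing essentially the same estimate.
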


\begin{proof}
Let $K_\varepsilon \in C^\infty(\R^d \times \R^d),~\varepsilon \in ]0, 1]$, be a sequence of distributions supported in a fixed compact set such that $K_\varepsilon \to K$ in $\mathcal{D}'_\Gamma(\R^d \times \R^d)$. Let $\Gamma' \subset T^*(\R^{d}\times \R^d)$ an open conical subset containing $N^*\Delta$. As $K_\varepsilon$ is compactly supported we may assume that $|t-q| > \delta_0$ for any $(t, q) \in \Gamma \times \Gamma'$ such that $|t| = |q| = 1$ for some $\delta_0 > 0.$ As a consequence, for every $N$ there is $C_N > 0$ such that for any $\varepsilon > 0$ small enough,
\begin{equation}\label{eq:est1}
\left|\widehat K_\varepsilon(q)\right| \leqslant C_N \langle q \rangle^{-N}, \quad q \in \Gamma',
\end{equation}
Let $\Gamma'' \subset \Gamma'$ another open conical subset containing $N^*\Delta$ and let $\delta > 0$ such that for any $q \in \Gamma''$ and $t \in \R^{2d}$ one has
\begin{equation}\label{eq:est2}
|t - q| < \delta |q| \quad \implies \quad t \in \Gamma'.
\end{equation}
Then for any $q \in \Gamma''$
$$
\begin{aligned}
(2\pi)^{2d}\left|\widehat{K_\varepsilon \pi_1^*u}(q)\right| &\leqslant  \int_{\R^{2d}_t} |\widehat K_\varepsilon(t)| |\widehat{\pi_1^*u}(q-t)| \dd t \\
&\leqslant \int_{|t-q| < \delta |q|} |\widehat K_\varepsilon(t)| |\widehat{\pi_1^*u}(q-t)| \dd t + \int_{|t-q| \geqslant \delta |q|} |\widehat K_\varepsilon(t)| |\widehat{\pi_1^*u}(q-t)| \dd t.
\end{aligned}
$$
Let $N_1,N_2 \in \N_{\geqslant_1}$. We have, with $\langle t \rangle = \sqrt{1+|t|^2}$, using (\ref{eq:est1}) and (\ref{eq:est2}), assuming that $u \in C^{N_2}_c(\R^d)$ with $N_2 \geqslant 2d + 1$,
$$
\begin{aligned}
\int_{|t-q| < \delta |t|} |\widehat K_\varepsilon(t)| |\widehat{\pi_1^*u}(q-t)| \dd t  &\leqslant C_{N_1, N_2} \int_{|t-q| < \delta |q|} \langle t \rangle^{-N_1} \langle q-t\rangle^{-N_2} \dd t\\
&\leqslant C_{N_1, N_2}' \langle q \rangle^{-N_1 + N_2} \int_{\R^{d}} \langle t \rangle^{-N_2} \dd t.
\end{aligned}
$$
where we used Peetre's inequality. On the other hand, we have with $k$ being the order of $K$, and any $N_3 \in \N_{\geqslant 1}$ such that $u \in C^{N_3}_c(\R^{d})$
$$
\begin{aligned}
\int_{|t-q| \geqslant \delta |q|} | \widehat K_\varepsilon(t)| |\widehat{\pi_1^*u}(q-t)| \dd t &\leqslant C_{k, N_3} \int_{|t-q| \geqslant \delta |q|} \langle t \rangle^{k} \langle q-t \rangle^{-N_3} \\
&\leqslant C'_{k, N_3} \langle q \rangle^{-N_3 + (k + 2d + 1)} \int_{\R^{2d}} \langle t \rangle^{-2d-1} \dd t.
\end{aligned}
$$
Therefore, if $u \in C^N(\R^d)$ with $N = k + 2d + 1 + N'$ we have
\begin{equation}\label{eq:est3}
(2\pi)^{2d}\left|\widehat{K_\varepsilon \pi_1^*u}(q)\right| \leqslant C_N \langle q \rangle^{-N'}, \quad q \in \Gamma''.
\end{equation}
Note that for $\varphi \in C^\infty_c(\R^d)$ one has
$$
\langle i^*(K_\varepsilon \pi_1^*u ), \varphi \rangle = \int_{\R^d_x} \varphi(x) \int_{\R^d_\xi \times \R^d_\eta} \widehat{K_\varepsilon \pi_1^*u}(\xi, \eta) \e^{ix(\xi + \eta)} \dd \xi \dd \eta \dd x.
$$
Indeed (\ref{eq:est3}) shows that the integral in $(\xi, \eta)$ converges near $N^*\Delta$ if $N' \geqslant 2d + 1$, and far from $N^*\Delta$ we can use the stationnary phase method to get enough convergence in $(\xi, \eta)$, so that the above integral makes sense as an oscillatory integral and coincides with $\langle i^*(K_\varepsilon \pi_1^*u ),  \varphi \rangle$, since this formula is obviously true if $u$ is smooth. Moreover all the above estimates are uniform in $\varepsilon,$ and thus letting $\varepsilon \to 0$ we obtain the desired result, since obviously one has
$$
i^*(K_\varepsilon \pi_1^*u) = u(i^*K_\varepsilon), \quad \varepsilon \in ]0,1].
$$
\end{proof}

\bibliography{bib.bib}
\bibliographystyle{alpha}

\end{document}